\documentclass[12pt]{article}
\usepackage{amsthm, amsmath, amssymb, enumerate}
\usepackage{fullpage}
\usepackage{stfloats}
\usepackage[colorlinks=true]{hyperref}
\usepackage{tikz}
\usepackage{xcolor}
\usetikzlibrary{positioning}
\usepackage[all]{xy}
\usepackage{esint}

\begin{document}
\title{Modular Heights of Unitary Shimura Varieties III: Proof of the Main Theorem}
\author{Ziqi Guo}
\maketitle

\theoremstyle{plain}
\newtheorem{thm}{Theorem}[section]
\newtheorem{theorem}[thm]{Theorem}
\newtheorem{cor}[thm]{Corollary}
\newtheorem{corollary}[thm]{Corollary}
\newtheorem{lem}[thm]{Lemma}
\newtheorem{lemma}[thm]{Lemma}
\newtheorem{pro}[thm]{Proposition}
\newtheorem{proposition}[thm]{Proposition}
\newtheorem{prop}[thm]{Proposition}
\newtheorem{definition}[thm]{Definition}
\newtheorem{assumption}[thm]{Assumption}
\newtheorem{conjecture}[thm]{Conjecture}
\newenvironment{revision}{\color{black}}{\color{black}}
\def\avint{\mathop{\,\rlap{-}\!\!\int}\nolimits}

\theoremstyle{remark}
\newtheorem{remark}[thm]{Remark}
\newtheorem{example}[thm]{Example}
\newtheorem{remarks}[thm]{Remarks}
\newtheorem{problem}[thm]{Problem}
\newtheorem{exercise}[thm]{Exercise}
\newtheorem{situation}[thm]{Situation}
\newtheorem{acknowledgment}[thm]{Acknowledgment}

\numberwithin{equation}{subsection}

\newcommand{\ZZ}{\mathbb{Z}}
\newcommand{\CC}{\mathbb{C}}
\newcommand{\QQ}{\mathbb{Q}}
\newcommand{\RR}{\mathbb{R}}
\newcommand{\HH}{\mathcal{H}}     % upper half plane

\newcommand{\ad}{\mathrm{ad}}            % _ad admissible class
\newcommand{\NT}{\mathrm{NT}}
\newcommand{\nonsplit}{\mathrm{nonsplit}}
\newcommand{\Pet}{\mathrm{Pet}}
\newcommand{\Fal}{\mathrm{Fal}}
\newcommand{\Af}{\mathbb{A}_f}

\newcommand{\cs}{{\mathrm{cs}}}

\newcommand{\XU}{X_U}
\newcommand{\Fn}{F_v}
\newcommand{\LU}{L_U}
\newcommand{\LL}{\overline{\mathcal{L}}}
\newcommand{\OF}{\mathcal{O}_F}
\renewcommand{\OE}{\mathcal{O}_E}
\newcommand{\XXU}{\mathcal{X}_U}
\newcommand{\OA}{\underline{\Omega}_\mathcal{A}}
\newcommand{\OU}{\Omega_{\mathcal{X}_U/\mathbb{Z}[\frac{1}{n}]}}
\newcommand{\WA}{\underline{\omega}_\mathcal{A}}
\newcommand{\WU}{\omega_{\mathcal{X}_U/\mathbb{Z}[\frac{1}{n}]}}
\newcommand{\HHom}{\mathcal{H}\mathrm{om}}

\newcommand{\pair}[1]{\langle {#1} \rangle}
\newcommand{\wpair}[1]{\left\{{#1}\right\}}
\newcommand{\wh}{\widehat}
\newcommand{\wt}{\widetilde}

\newcommand\Spf{\mathrm{Spf}}

\newcommand{\lra}{{\longrightarrow}}

\newcommand{\matrixx}[4]
{\left( \begin{array}{cc}
  #1 &  #2  \\
  #3 &  #4  \\
 \end{array}\right)}        % 2*2 matrix

%%%%%%%%%%%%%%%%%%%%%%%%%%

\newcommand{\BA}{{\mathbb {A}}}
\newcommand{\BB}{{\mathbb {B}}}
\newcommand{\BC}{{\mathbb {C}}}
\newcommand{\BD}{{\mathbb {D}}}
\newcommand{\BE}{{\mathbb {E}}}
\newcommand{\BF}{{\mathbb {F}}}
\newcommand{\BG}{{\mathbb {G}}}
\newcommand{\BH}{{\mathbb {H}}}
\newcommand{\BI}{{\mathbb {I}}}
\newcommand{\BJ}{{\mathbb {J}}}
\newcommand{\BK}{{\mathbb {K}}}
\newcommand{\BL}{{\mathbb {L}}}
\newcommand{\BM}{{\mathbb {M}}}
\newcommand{\BN}{{\mathbb {N}}}
\newcommand{\BO}{{\mathbb {O}}}
\newcommand{\BP}{{\mathbb {P}}}
\newcommand{\BQ}{{\mathbb {Q}}}
\newcommand{\BR}{{\mathbb {R}}}
\newcommand{\BS}{{\mathbb {S}}}
\newcommand{\BT}{{\mathbb {T}}}
\newcommand{\BU}{{\mathbb {U}}}
\newcommand{\BV}{{\mathbb {V}}}
\newcommand{\BW}{{\mathbb {W}}}
\newcommand{\BX}{{\mathbb {X}}}
\newcommand{\BY}{{\mathbb {Y}}}
\newcommand{\BZ}{{\mathbb {Z}}}

\newcommand{\CA}{{\mathcal {A}}}
\newcommand{\CB}{{\mathcal {B}}}
\newcommand{\CD}{{\mathcal{D}}}
\newcommand{\CE}{{\mathcal {E}}}
\newcommand{\CF}{{\mathcal {F}}}
\newcommand{\CG}{{\mathcal {G}}}
\newcommand{\CH}{{\mathcal {H}}}
\newcommand{\CI}{{\mathcal {I}}}
\newcommand{\CJ}{{\mathcal {J}}}
\newcommand{\CK}{{\mathcal {K}}}
\newcommand{\CL}{{\mathcal {L}}}
\newcommand{\CM}{{\mathcal {M}}}
\newcommand{\CN}{{\mathcal {N}}}
\newcommand{\CO}{{\mathcal {O}}}
\newcommand{\CP}{{\mathcal {P}}}
\newcommand{\CQ}{{\mathcal {Q}}}
\newcommand{\CR }{{\mathcal {R}}}
\newcommand{\CS}{{\mathcal {S}}}
\newcommand{\CT}{{\mathcal {T}}}
\newcommand{\CU}{{\mathcal {U}}}
\newcommand{\CV}{{\mathcal {V}}}
\newcommand{\CW}{{\mathcal {W}}}
\newcommand{\CX}{{\mathcal {X}}}
\newcommand{\CY}{{\mathcal {Y}}}
\newcommand{\CZ}{{\mathcal {Z}}}

\newcommand{\ab}{{\mathrm{ab}}}
\newcommand{\Ad}{{\mathrm{Ad}}}
\newcommand{\an}{{\mathrm{an}}}
\newcommand{\Aut}{{\mathrm{Aut}}}

\newcommand{\Br}{{\mathrm{Br}}}
\newcommand{\bs}{\backslash}
\newcommand{\bbs}{\|\cdot\|}

\newcommand{\Ch}{{\mathrm{Ch}}}
\newcommand{\cod}{{\mathrm{cod}}}
\newcommand{\cont}{{\mathrm{cont}}}
\newcommand{\cl}{{\mathrm{cl}}}
\newcommand{\criso}{{\mathrm{criso}}}
\newcommand{\de}{{\mathrm{d}}}
\newcommand{\dR}{{\mathrm{dR}}}
\newcommand{\df}{\mathrm{det}^*}
\newcommand{\disc}{{\mathrm{disc}}}
\newcommand{\Div}{{\mathrm{Div}}}
\renewcommand{\div}{{\mathrm{div}}}
\newcommand{\Dh}{\widehat{\mathcal{D}}}
\newcommand{\Ei}{\mathrm{Ei}}
\newcommand{\Eis}{{\mathrm{Eis}}}
\newcommand{\End}{{\mathrm{End}}}
\newcommand{\Fil}{\mathrm{Fil}}
\newcommand{\Frob}{{\mathrm{Frob}}}

\newcommand{\Gal}{{\mathrm{Gal}}}
\newcommand{\GL}{{\mathrm{GL}}}
\newcommand{\GO}{{\mathrm{GO}}}
\newcommand{\GSO}{{\mathrm{GSO}}}
\newcommand{\GSp}{{\mathrm{GSp}}}
\newcommand{\GSpin}{{\mathrm{GSpin}}}
\newcommand{\GU}{{\mathrm{GU}}}
\newcommand{\BGU}{{\mathbb{GU}}}

\newcommand{\Has}{\mathrm{hasse}}
\newcommand{\Hom}{{\mathrm{Hom}}}
\newcommand{\Hol}{{\mathrm{Hol}}}
\newcommand{\HC}{{\mathrm{HC}}}
\newcommand{\id}{\mathrm{id}}
\newcommand{\Img}{{\mathrm{Im}}}
\newcommand{\Ind}{{\mathrm{Ind}}}
\newcommand{\ine}{\mathrm{ine}}
\newcommand{\inv}{{\mathrm{inv}}}
\newcommand{\Isom}{{\mathrm{Isom}}}

\newcommand{\Jac}{{\mathrm{Jac}}}
\newcommand{\JL}{{\mathrm{JL}}}

\newcommand{\Ker}{{\mathrm{Ker}}}
\newcommand{\KS}{{\mathrm{KS}}}

\newcommand{\Lie}{{\mathrm{Lie}}}
\renewcommand{\mod}{\mathrm{mod}}
\newcommand{\mm}{\mathfrak{m}}
\newcommand{\new}{{\mathrm{new}}}
\newcommand{\Nm}{\mathrm{Nm}}
\newcommand{\NS}{{\mathrm{NS}}}

\newcommand{\ord}{{\mathrm{ord}}}
\newcommand{\ol}{\overline}
\newcommand{\otf}{\otimes^*}
\newcommand{\rank}{{\mathrm{rank}}}

\newcommand{\PGL}{{\mathrm{PGL}}}
\newcommand{\PSL}{{\mathrm{PSL}}}
\newcommand{\Pic}{\mathrm{Pic}}
\newcommand{\Prep}{\mathrm{Prep}}
\newcommand{\Proj}{\mathrm{Proj}}
\renewcommand{\Pr}{\mathcal{P}r}
\newcommand{\Picc}{\mathcal{P}ic}

\newcommand{\ram}{\mathrm{ram}}
\renewcommand{\Re}{{\mathrm{Re}}}
\newcommand{\Res}{{\mathrm{Res}}}
\newcommand{\red}{{\mathrm{red}}}
\newcommand{\reg}{{\mathrm{reg}}}
\newcommand{\sm}{{\mathrm{sm}}}
\newcommand{\sing}{{\mathrm{sing}}}
\newcommand{\SL}{\mathrm{SL}}
\newcommand{\SLL}{\widetilde{\mathrm{SL}}}
\newcommand{\SO}{\mathrm{SO}}
\newcommand{\Sp}{\mathrm{Sp}}
\newcommand{\spl}{\mathrm{spl}}
\newcommand{\Sym}{{\mathrm{Sym}}}
\newcommand{\Spec}{\mathrm{Spec}}
\renewcommand{\ss}{\mathrm{ss}}
\newcommand{\tor}{{\mathrm{tor}}}
\newcommand{\tr}{{\mathrm{tr}}}

\newcommand{\ur}{{\mathrm{ur}}}
\newcommand{\U}{\mathrm{U}}
\newcommand{\UU}{\mathrm{U}(1,1)}
\newcommand{\vol}{{\mathrm{vol}}}

\newcommand{\ds}{\displaystyle}

\begin{abstract}
   This is the third and the last of a series of three papers, in which we prove a formula expressing the modular height of a unitary Shimura variety over a CM number field in terms of the logarithmic derivative of the Hecke L-function associated with the CM extension. The main idea of our proof is to compare the holomorphic projection of the derivative of a certain mixed Eisenstein-theta series and the arithmetic degree of a generating series of divisors on unitary Shimura varieties.

   In this paper, we compute the arithmetic degree of the arithmetic generating series of divisors on unitary Shimura varieties, and then, combining with the results from the first two papers in this series, derive the modular height formula for unitary Shimura varieties.
\end{abstract}

\tableofcontents

\section{Introduction}\label{introduction}
The goal of this series of three papers (\cite{Guo1}, \cite{Guo2} and the current one) is to prove a formula expressing the modular height of a unitary Shimura variety over a CM field in terms of the logarithmic derivative of the Hecke L-function associated with the CM extension. Our work can be viewed as an extension of X. Yuan's work \cite{Yuan1}, which is based on the work of Yuan--Zhang--Zhang \cite{YZZ2} on the Gross--Zagier formula, and the work of Yuan--Zhang \cite{YZ1} on the averaged Colmez conjecture. All these works are in turn inspired by the pioneering work Gross--Zagier \cite{GZ} and some philosophies of Kudla's program \cite{Kud1,Kud2,Kud3,Kud4}. These works all aim to calculate the arithmetic invariants of Shimura varieties using special values of L-functions.

In our work, we will focus on the generating series of divisors on unitary Shimura varieties and their arithmetic versions, comparing them with the derivative of mixed theta-Eisenstein series. Through a series of specific and intricate computations, we will provide a precise formula for the modular height.

This is the third and the last paper in this series. Its goal is to carry out all the computations on the ``arithmetic side", and combining with the results from the first two papers, prove the modular height formula. More precisely, we compute the arithmetic degree part of the arithmetic generating series of divisors within the height series defined in \cite{Guo2}, namely its arithmetic intersection number with the arithmetic 1-cycle defined by the self-intersection of the Hodge bundle. Then, we consider the difference between the derivative series defined in \cite{Guo1} and the height series; this is a cuspidal form, which we call the difference series. Finally, by combining with the computational results from \cite{Guo1,Guo2}, we obtain from the constant term of the difference series an inductive formula involving the modular height, thereby proving our main theorem.

To save space and avoid repetition, most of the notation and terminology used in this series of papers can be found in \cite[Sec 2.1, 4.1]{Guo1}, and will not be defined in this paper and \cite{Guo2}.

\subsection{Modular height of the unitary Shimura variety}

Throughout this paper, we always fix a totally real field $F$ of degree $d=[F:\QQ]$ with a fixed infinite place $\iota$. Let $E/F$ be a fixed CM extension, i.e., a totally imaginary quadratic extension of $F$.  We fix an extension of $\iota$ to an embedding $E\to\CC$, which we also denote by $\iota$.

Let $\BV$ be a totally positive definite \textit{incoherent Hermitian space} over $\BA_E$ of dimension $n+1$ with $n>0$, i.e., there does not exist any Hermitian space $V$ over $E$ such that $V\otimes_E \BA_E=\BV$. We also define $V$ to be the \textit{nearby coherent Hermitian space} with respect to $\iota$, i.e., $V$ has signature $(n,1)$ at $\iota$ and $(n+1,0)$ at all other archimedean places.

Let $G=\Res_{F/\QQ}\U(V)$ be a reductive group over $\QQ$, where $\U(V)$ is the unitary group of the Hermitian space $V$. The Hermitian symmetric domain $D$ is defined as follows:
\begin{equation*}
    D=\{z\in\BP(V_{\iota,\CC})\big|q(z)<0\}.
\end{equation*}
Here $q(z)$ is the Hermitian norm of the vector $z$. It is connected and carries a $\U(V_\CC)$-invariant complex structure.

Choose a certain Hermitian lattice $\Lambda\subset V$, which is self-dual at each place of $E$ unramified in $E/F$, and is $\varpi_{E_v}$-modular or almost $\varpi_{E_v}$-modular at each place of $E$ ramified in $E/F$. These definitions are introduced in \cite[Section 2.1, Def 2.2]{Guo1}. Let $U$ be an open compact subgroup of $G(\wh{\QQ})$. We require $U$ to be \textit{maximal} at each finite place $v$, i.e, $U_v\subset\U(V(E_v))$ is the stabilizer of $\Lambda(\mathcal{O}_{E_v})$. Then we can define a \textit{unitary Shimura variety} $X_U$ over the reflex field $E$, whose $(\CC,\iota)$-points are given by
\begin{equation*}
    X_U(\CC)=G(\QQ)\backslash D\times G(\wh{\QQ})/U.
\end{equation*}
It is smooth over $E$ of dimension $n$.

Under the above complex uniformization, let $L_D$ be the tautological line bundle on $D$, i.e., for each $z\in D$, the fiber $L_{D,z}$ is isomorphic to $\CC z$. This line bundle carries a natural Hermitian metric $h_{L_D}$ such that
\begin{equation*}
    h_{L_D}(s_z)=-q(s_z),
\end{equation*}
for $s_z\in L_{D,z}\cong \CC z$, $z\in D$, which is equivariant under the action of $\U(V_\CC)$. Let $\LU$ be the descent of $L_D\times 1_{G(\wh{\QQ})/U}$, this is an ample line bundle on $\XU$ with $\QQ$-coefficients. We sometimes call $L_U$ the \textit{Hodge bundle} of $X_U$.

For the arithmetic theory we use the normalization of \cite[Section~2]{Guo2}. After the usual base change, the arithmetic intersections are computed on the PEL-type RSZ models and normalized by the auxiliary degrees appearing there. We continue to denote the resulting arithmetic model and Hodge bundle by $\mathcal X_U$ and $\LL_U$. At the relevant finite places this agrees with the corresponding relative RSZ/Rapoport--Zink model.

The \textit{modular height} of $X_U$ with respect to $\LL_U$ is defined to be
\begin{equation*}
    h_{\LL_U}(X_U)=\frac{\wh{\deg}(\LL_U)}{\deg(L_U)}.
\end{equation*}
Here $\deg(L_U)=\deg_L(X)$ is the self-intersection number of $L_U$ over the generic fiber $X_U$, which is referred to in some references as the ``geometric volume", and the numerator is the arithmetic self-intersection number on the arithmetic variety $\mathcal{X}_U$ in the setting of Arakelov geometry.

By the projection formula, $h_{\LL_U}(X_U)$ is independent of $U$. In fact, for any split place $v$ of $E/F$, we can allow $U_v$ to be the principal congruence subgroup of the maximal open compact subgroup. However, for other local subgroups $U_v$, $\LL_U$ is less canonical at places $v$ such that $U_v$ is not maximal, we will assume that $U$ is maximal at every nonsplit $v$ in the main theorem and afterwards.

Now, we further require $F\ne \QQ$ primarily to ensure properness. It should be noted that the vast majority of computations in this paper hold true for $F=\QQ$. The goal of this paper is to prove the following formula.
\begin{theorem}\label{Modular height}
    Assume the standing lattice and Schwartz data of \cite[Definition~2.2 and Section~4.1]{Guo1} and the arithmetic local hypotheses of \cite[Assumption~2.4]{Guo2}.  Denote by $\eta$ the quadratic character associated with $E/F$, and let $L_f(\cdot,\eta)$ denote the product of all finite local Hecke $L$-factors, consistently with \cite[Section~2.1]{Guo1}. We have
    \begin{equation*}
    \begin{aligned}
    h_{\LL}(X)=&2\sum_{m=1}^n\frac{L'_f(m+1,\eta^{m+1})}{L_f(m+1,\eta^{m+1})}-\Big((n+1)\cdot\gamma+(n+1)\log2\pi-\sum_{m=1}^n\frac{2}{m}+1\Big)[F:\QQ]\\
    &-(n-1)\frac{L_f'(1,\eta)}{L_f(1,\eta)}+(n+1)\log|d_F|-\frac{n-1}{2}\log|d_{E/F}|.
    \end{aligned}
    \end{equation*}
    Here $\gamma$ is the Euler constant, $d_F$ is the discriminant of $F/\QQ$ and $d_{E/F}$ is the norm of the relative discriminant of $E/F$.
\end{theorem}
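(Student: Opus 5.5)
The plan is to follow the strategy of Yuan's proof for orthogonal/quaternionic Shimura varieties, comparing two modular forms on $\U(1,1)$ (or its incoherent analogue). The first is the derivative series $\mathcal{P}r\,I'(0,g,\phi)$ of \cite{Guo1}: the holomorphic projection of the derivative of the mixed Eisenstein--theta series, whose constant term was computed there in terms of $L'_f/L_f$ of the characters $\eta^{m+1}$ together with archimedean and ramified constants. The second is the height series of \cite{Guo2}, obtained by pairing the arithmetic generating series $\widehat{\mathcal Z}(g,\phi)$ of divisors on $\mathcal X_U$ with the arithmetic $1$-cycle $\LL_U^{\,n}$. Their difference $\mathcal D(g,\phi)$ is the difference series. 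I would show, using \cite{Guo2}, that it is a holomorphic modular form, and that its nonconstant Fourier coefficients are killed by the local comparisons at good, split and ramified places and at the archimedean place. It is therefore a constant, which must vanish by cuspidality. Equating constant terms then yields an identity involving $h_{\LL}(X)$.

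\textbf{Step 1} is the new computation here: the arithmetic degree $\widehat{\mathcal Z}_t(g,\phi)\cdot\LL^{n}$. For $t\neq 0$, $\widehat{\mathcal Z}_t$ is a sum of special divisors, i.e.\ unitary Shimura varieties $X'$ of dimension $n-1$ attached to $v^\perp$, each with Green's function. Restricting $\LL_U$ to such a divisor gives $\LL_{U'}$, up to the normalization of \cite[Section~2]{Guo2}. So $\widehat{\mathcal Z}_t\cdot\LL^n$ splits into two pieces. The first is $h_{\LL}(X')\deg(L_{U'})$, summed over divisors, plus vertical corrections at ramified places. The second is $\frac12\int_{X_U(\CC)}G_t\,c_1(\bar L)^n$, which I would compute by unfolding against the Kudla--Millson or Bruinier-type Green's function, producing an archimedean Whittaker derivative.

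\textbf{Step 2} is the constant-term computation. The constant term of the height series involves $\widehat{\mathcal Z}_0=-\LL^\vee$ (plus boundary terms), so it contributes $-\wh{\deg}(\LL^{n+1})=-h_{\LL}(X)\deg(L_U)$. Meanwhile, Siegel--Weil identifies $\sum_t \deg(\mathcal Z_t)$ with the Eisenstein series, so the nonconstant terms of Step 1 assemble into $h_{\LL}(X')$ times the $n$-dimensional Eisenstein coefficients. Comparing with the constant term of $\mathcal{P}r\,I'$ from \cite{Guo1} gives an inductive relation of the form
\[
h_{\LL}(X_{n})-h_{\LL}(X_{n-1}) = 2\frac{L'_f(n+1,\eta^{n+1})}{L_f(n+1,\eta^{n+1})} + c_n,
\]
where $c_n$ collects the contributions $-(\gamma+\log 2\pi)[F:\QQ]+\frac{2}{n}[F:\QQ]$, the discriminant terms and the $\eta$-terms. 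The parity shift between $\eta^{n+1}$ and $\eta^{n}$ should account for the $-(n-1)L_f'(1,\eta)/L_f(1,\eta)$ and $-\frac{n-1}{2}\log|d_{E/F}|$ pieces.

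\textbf{Step 3} handles the base case and the induction. For $n=0$, $X$ is a CM-point set, and its height is given by the Colmez/Chowla--Selberg formula (the averaged Colmez result of Yuan--Zhang). This yields the constant $-(\gamma+\log 2\pi+1)[F:\QQ]+\log|d_F|+\ldots$. Summing the recursion from $1$ to $n$ telescopes to the stated formula; the sum $\sum 2/m$ arises from $\int G\,c_1^n$ at each level.

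I expect the main obstacle to be Step 1 at the ramified places and at infinity: pinning down exactly the archimedean integral $\int G_t c_1(\bar L)^n$ and the vertical components at almost-$\varpi$-modular ramified places. These must be normalized precisely enough that the constants $\frac{2}{m}$, $\log 2\pi$ and $\frac{n-1}{2}\log|d_{E/F}|$ match. For the archimedean integral I would first try the Stokes/star-product argument, reducing to the known Whittaker derivative computed in \cite{Guo1}.
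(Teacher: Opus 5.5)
\textbf{There is a genuine gap.} You pair the generating series only with $\LL^n$, but the derivative series you invoke cannot be compared with that pairing alone. $\Pr I'(0,g,\Phi)$ comes from a \emph{mixed} theta--Eisenstein series and splits into two parts:
\begin{itemize}
\item $\Pr' I'(0,g,\Phi)$, which matches $\wh{\mathcal Z}_*(g,\Phi)\cdot\mathcal P$, the pairing with the CM cycle (the main theorem of \cite{Guo2});
\item $\Pr'\mathcal J'(0,g,\Phi)$, which is what gets compared with $\wh{\mathcal Z}_*(g,\Phi)\cdot\hat\xi$, where $\hat\xi=\LL^n/\deg_L(X)$.
\end{itemize}
The paper's difference series is therefore built from $\bigl(\wh{\mathcal Z}_*-\frac{\deg_L Z_*}{\deg_L X}\LL\bigr)\cdot(\mathcal P-\hat\xi)$. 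The term $\LL\cdot\mathcal P$, evaluated by the CM-point height formula \cite[Theorem~1.1]{Guo2}, enters the constant term at \emph{every} inductive step. That is where one $L_f'(0,\eta)/L_f(0,\eta)$ per step comes from, and hence the $(n-1)$ copies of $-L_f'(1,\eta)/L_f(1,\eta)$ after the functional equation. A ``parity shift between $\eta^{n+1}$ and $\eta^n$'' produces nothing of the sort, and your $c_n$ is read off from the answer rather than derived. The paper also starts the induction at $n=1$, not at CM points. It transfers Yuan's quaternionic curve formula via \cite[Section~3.3]{Guo2}, which also calibrates the residual term at $n=1$.

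\textbf{Step~1 rests on a false assumption.} You assume that $\LL|_{\mathcal Z(y)}$ is, up to ramified corrections, the Hodge bundle of a standard lower-dimensional Shimura variety, so that every special divisor contributes $h_{\LL}(X')\deg L_{U'}$. This fails whenever $\Lambda_v\cap V_{y,v}^\perp$ is not the standard lattice, for example split type $(s,t)$ with $k=v(q(y))-s-t>0$, or inert type $s>0$. There $\LL|_{\mathcal Z(y)}$ differs from the twisted Hodge bundle $\LL_y$, built from the universal isogeny $\mathcal A\to\mathcal A_y$, by a vertical divisor. This gives $\deg_L(X)D_0(y_v)=k\log N_v$, respectively $s\log N_v$.

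Computing these $D$-terms is the main new work of the paper. It requires the orbit classification, the local Hodge multiplicity reduced to Zhang's curve calculation, and the stabilizer index $\vol(U_v)/\vol(U_{y,v})$. The $D$-terms assemble into the pseudo-Eisenstein series $\mathcal F^{(v)}_\Phi$, which cancels the extra Whittaker-derivative term $2S_{a,n}$ only at split places. So your Step~2 claim, that the nonconstant terms assemble into $h_{\LL}(X')$ times Eisenstein coefficients, is false.

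\textbf{The nonconstant coefficients of $\mathcal D$ are not killed.} What remains splits into two parts:
\begin{itemize}
\item a combination of non-singular pseudo-theta and pseudo-Eisenstein series. Its constant term must be extracted with the key lemma \cite[Lemma~2.4]{Guo1}, which produces the nonzero local contributions $c_{\Phi_v}(1,0)$ and $\bar k_{\Phi_v}(0)$;
\item exceptional $B$-families at the nonsplit places. These are not Whittaker coefficients of any Schwartz section, so the key lemma does not apply to them.
\end{itemize}
Showing that the zero-th coefficient of the $B$-families vanishes needs the separate argument of Section~\ref{Contributions of singular pseudo-Eisenstein series}:
\begin{itemize}
\item at inert places, rationality in $N_v$ combined with Baker's theorem;
\item at ramified places for odd $n$, a torus-eigenvalue argument;
\item at ramified places for even $n$, two-norm stabilization, which also fixes $c_{n,v}=-2N_v^{-n}$.
\end{itemize}
Finally, replacing the ideal standard vector $y_0$ by a genuine global vector needs the weak-approximation and Baker argument of Section~\ref{final formula section}.
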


Computing the modular height of Shimura varieties is a very important problem in the theory of special value formulas, because modular height formulas always contain the crucial information of the derivative of an L-function at a special value, and there have been many similar works prior to this. First of all, in 2001, Bost (unpublished) and K\"uhn (\cite[Thm 6.1]{Kuh}) proved a modular height formula for modular curves. Later in 2006, Kudla--Rapoport--Yang \cite[Theorem 1.0.5]{KRY} proved a formula for the modular heights of quaternionic Shimura curves over $\QQ$. After that, Bruinier--Burgos Gil--Kuhn \cite{BBK} proved a modular height formula for Hilbert modular surfaces in 2007, and H\"ormann \cite{Ho} proved a modular height formula up to $\log\QQ_{>0}$ for Shimura varieties of orthogonal types over $\QQ$ in 2014. The most closely related conclusion is the following work by Bruinier--Howard \cite{BH} in 2023. When $F=\QQ$, they computed the modular height of the Shimura varieties of unitary similitudes with a different arithmetic line bundle. Furthermore, what needs to be emphasized is that our choices of level group and integral model are \textit{different} from that of \cite{BH}. Roughly speaking, compared with \cite{BH}, the Hermitian lattices at ramified primes in our case are not self-dual, which leads to some differences. Locally, the relative RSZ/Rapoport--Zink models attached to our $\varpi_{E_v}$-modular or almost $\varpi_{E_v}$-modular lattice data exhibit the relevant exotic smoothness.  Accordingly, all global arithmetic statements in the present paper are formulated using the normalized RSZ objects above.

The works introduced above all consider Shimura varieties over $\QQ$ or over an imaginary quadratic field. A natural question is whether they can be generalized to Shimura varieties over more general fields. In fact, this generalization is highly nontrivial, because many techniques that work over $\QQ$ are no longer effective in the general setting; see the explanation in \cite[Section~1.1]{Guo1}. In 2023, X. Yuan \cite{Yuan1} computed the modular height of quaternionic Shimura curves over totally real fields, which is based on the work of Yuan--Zhang--Zhang \cite{YZZ2} on the Gross--Zagier formula, and the work of Yuan--Zhang \cite{YZ1} on the averaged Colmez conjecture. Our proof can be viewed as a higher-dimensional generalization of their proof. Especially, when $n=1$ and the standing self-dual inert lattice datum of the present paper is retained, the set $\Sigma_f$ in the more general curve theorem of \cite[Theorem~1.2]{Guo2} is empty.  Our formula is exactly that specialization and, through the normalization comparison in \cite[Section~3]{Guo2}, is compatible with \cite[Theorem~1.1]{Yuan1}.

Finally, for general Shimura varieties, we can always conjecture that the formula for the modular height of its integral model should, up to some constant differences, align with certain logarithmic derivatives of L-functions.

We should remind the readers that the main terms of our main formula (and also those related formulas mentioned above) are sums of logarithmic derivatives of the Hecke L-functions associated with the quadratic character $\eta$. This is intuitive, as the use of Tamagawa numbers can demonstrate that the geometric self-intersection number $\deg(L_U)$ is essentially the product of special values of these L-functions, i.e.,
\begin{equation*}
    \deg(L_U)=c\cdot\prod_{m=1}^n L_f(m+1,\eta^{m+1}),
\end{equation*}
where $c$ is a constant unrelated to the L-function. See \cite[Theorem A]{BH} for a precise formula in their setting. Similar computations of self-intersection numbers of line bundles on Shimura varieties can be traced back to \cite{Vi}, where the author gives a formula for quaternionic Shimura curve. Then similar to \cite{Yuan1} and \cite{BH}, when we shift to the arithmetic case and consider the modular height, the special values of L-functions transform into corresponding logarithmic derivatives. The relationship between formulas for geometric intersection numbers and arithmetic intersection numbers is similar to the relation between the Gross--Zagier formula and the Waldspurger formula  (as fully explored in \cite{YZZ2}), and is also similar to the relation between the averaged Colmez conjecture and the class number formula (as treated in \cite{YZ1}).

Meanwhile, because the right-hand side of our final formula is completely transparent, this means that compared to \cite{YZZ2,YZ1}, we need to calculate each term accurately, leading to higher computational complexity. In fact, even compared to \cite{Yuan1} and \cite{BH}, we need to engage in more discussion and computations. In contrast to the former, as we are considering higher-dimensional cases, we will encounter some entirely new challenges in both the computation of Whittaker functions and arithmetic intersection numbers, necessitating innovative approaches for resolution. In comparison to the latter, as we mentioned earlier, many techniques used in their work for classical modular forms do not hold in our scenario. Therefore, we also need to invest more effort.

\subsection{Theorem of the arithmetic degree of generating series}\label{generating series introduction}
In our proof of the modular height formula, the generating series formed by the special divisors on the unitary Shimura variety and its arithmetic version are very crucial. It is the core component of the height series on the arithmetic side. In fact, a common idea shared between our proof and the proof of the arithmetic volume formula in \cite{BH} is the use of generating series of divisors to do induction on the dimension of Shimura varieties. From a more general perspective, generating series are important tools for studying special cycles on Shimura varieties.

Regarding the generating series of special divisors on unitary Shimura varieties, we have given a detailed introduction in \cite[Sec 4.2]{Guo2}. Here we give a brief review. For any Schwartz function $\Phi\in\mathcal{S}(\BV)$ invariant under $U$, we have a generating series on the unitary Shimura variety $X_U$:
\begin{equation*}
    Z_\Phi(\tau)=[L^\vee]+\sum_{t\in F_+}Z_t q^t.
\end{equation*}
Here $q=(e^{2\pi i \tau_1},\cdots,e^{2\pi i \tau_d})$ with $d=[F:\QQ]$ and $\tau=(\tau_k)^d_{k=1}\in\mathcal{H}^d$, and $Z_t$ is the weighted special divisor
\begin{equation*}
    Z_t=\sum_{x\in U\backslash V_f,\langle x,x\rangle=t}\Phi(x)Z(x)_U
\end{equation*}
with $Z(x)_U$ the Kudla special divisor on $X_U$ associated with $x$. This is a Hilbert modular form of weight $\mathfrak{m}$. Here $\mathfrak{m}=(\mathfrak{m}_v)_{v|\infty}$, where $\mathfrak{m}_v$ is a pair of integers that is defined in \cite[Sec 3.2]{Guo1}. We can also write such generating function in terms of
\begin{equation*}
    Z(g,\Phi)=r(g)\Phi(0)[L^\vee]+\sum_{t\in F^+}\sum_{y\in U\backslash V_f,\langle y,y\rangle=t}r(g)\Phi(y)Z(y)_U
\end{equation*}
with $g\in\UU(\BA_F)$ and $r(g)$ the Weil representation.

Furthermore, in \cite[Sec 4.3]{Guo2}, using the $\LL$-admissible extension, we also define the arithmetic generating series of arithmetic special divisors on the integral model $\mathcal{X}_U$:
\begin{equation*}
    \wh{\mathcal{Z}}_\Phi(\tau):=[\LL^\vee]+\sum_{t\in F_+}\wh{\mathcal{Z}}_t q^t,
\end{equation*}
which can also be written as $\wh{\mathcal{Z}}(g,\Phi)$.

The core significance of the arithmetic generating series is that by taking arithmetic intersection numbers, one obtains a genuine automorphic form. Moreover, these automorphic forms are often closely related to derivatives of Eisenstein series at special points. This line of work is collectively referred to as the ``\textit{arithmetic Siegel--Weil formula}". In this paper, we also prove the following theorem, which perfectly illustrates this philosophy.
\begin{theorem}\label{main theorem of arithmetic Siegel-Weil II}
    Suppose $\wh{\mathcal{Z}}(g,\Phi)$ is the normalized RSZ arithmetic class associated with the $\LL$-admissible extension of the generating series of divisors $Z(g,\Phi)$, with $g\in\UU(\BA_F)$.  Let $\wh{\mathcal{Z}}_*(g,\Phi)$ be its non-constant part and put $\hat{\xi}=(\LL^n)/\deg_L(X)$.  Let $\Pr' \mathcal{J}'(0,g,\Phi)$ be the quasi-holomorphic projection of the derivative of the Eisenstein series defined in \cite[Theorem 3.4]{Guo1}. Then
    \begin{equation*}
        \Pr' \mathcal{J}'(0,g,\Phi)+\wh{\mathcal{Z}}_*(g,\Phi)\cdot \hat{\xi}
    \end{equation*}
    is the sum of finitely many non-degenerate pseudo-theta series, finitely many non-singular pseudo-Eisenstein series, and the exceptional $B$-families at the non-split places. The nonzero Fourier coefficients of every term are explicit, and Section~\ref{Contributions of singular pseudo-Eisenstein series} constructs the automorphic completion of the $B$-families and determines its zero-th coefficient.
\end{theorem}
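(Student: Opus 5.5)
The plan is to follow the strategy of \cite{Yuan1} and \cite{YZ1}: compute the Fourier coefficients of $\wh{\mathcal{Z}}_*(g,\Phi)\cdot\hat{\xi}$ place by place, and match them with the coefficients of $\Pr'\mathcal{J}'(0,g,\Phi)$ computed in \cite[Theorem 3.4]{Guo1}.

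\textbf{Step 1: decompose the arithmetic degree.} For $t\in F_+$, the $\LL$-admissible extension of \cite[Sec 4.3]{Guo2} writes $\wh{\mathcal{Z}}_t$ as the Zariski closure of $Z_t$, plus vertical components at the finite places, plus the admissible Green current at the archimedean places. Pairing with $\hat{\xi}=(\LL^n)/\deg_L(X)$ then splits $\wh{\mathcal{Z}}_t\cdot\hat{\xi}$ into three pieces:
\begin{itemize}
\item[(a)] the horizontal part $h_{\LL}(Z(y)_U)\deg_L(Z(y)_U)/\deg_L(X)$, summed over $y$ with $\langle y,y\rangle=t$;
\item[(b)] archimedean integrals of the Green function against $c_1(\LL)^n$;
\item[(c)] vertical contributions at the nonsplit finite places.
\end{itemize}

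\textbf{Step 2: identify each piece as a pseudo-series.}
\begin{itemize}
\item[(a)] Each special divisor $Z(y)_U$ is a unitary Shimura variety of dimension $n-1$ attached to $y^\perp$. Its height is constant along $U$-orbits with fixed local lattice type. Summing $r(g)\Phi(y)$ times these constants gives theta series for the lower-rank spaces, i.e.\ non-degenerate pseudo-theta series. Only finitely many lattice types occur because $\Phi$ is $U$-invariant and $\Lambda$ is fixed.
\item[(b)] The admissible Green function is built from $-\log$ of the quadratic form. Its integral against the invariant form reduces, via the Siegel--Weil formula, to a derivative of a local archimedean Whittaker function. After adding $\Pr'\mathcal{J}'$, whose archimedean terms were made quasi-holomorphic in \cite{Guo1}, this should cancel or leave pseudo-Eisenstein series nonsingular at $\infty$.
\item[(c)] At split places the vertical part vanishes by admissibility. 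At unramified inert places it is a Rapoport--Zink intersection, matching the derivative of the local Whittaker function $W'_{t,v}(0)$ up to pseudo-Eisenstein terms with nonsingular coefficients. At the ramified places with $\varpi$-modular or almost modular lattices, the exotic smoothness of the RSZ model produces exactly the extra terms grouped as the $B$-families, using the local hypotheses of \cite[Assumption~2.4]{Guo2}.
\end{itemize}

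\textbf{Step 3: combine with the analytic side.} Adding the coefficients of $\Pr'\mathcal{J}'(0,g,\Phi)$, the main $W'$-terms and the $\log|t|_v$-type terms should cancel. What survives is a finite sum of the three types listed in the theorem, with explicit coefficients. Since both $\wh{\mathcal{Z}}_*\cdot\hat{\xi}$ (by modularity of the arithmetic generating series from \cite{Guo2}) and $\Pr'\mathcal{J}'$ are automorphic, the remainder is automorphic as well. This justifies forming the automorphic completion of the $B$-families and reading off its zero-th coefficient in Section~\ref{Contributions of singular pseudo-Eisenstein series}.

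\textbf{Main obstacle.} I expect the hardest part to be the nonsplit ramified places in Step 2(c). There one must compute the vertical intersection of the closures of special divisors with $\LL^n$ on the exotic-smooth RSZ models and match it against derivatives of local Whittaker functions for non-self-dual lattices. My first attempt would reduce this to a degree computation on the reduced special fiber, which is a Deligne--Lusztig-type variety, via a local Kudla--Rapoport-type identity. I would then isolate the mismatch as a $B$-family whose coefficients depend on $t$ only through $\mathrm{ord}_v(t)$, which is what makes it completable to an Eisenstein-type form.
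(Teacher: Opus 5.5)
There is a genuine gap in Step 2(a), and it breaks Step 3.

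\textbf{The height term.} The heights $\mathcal Z(y)\cdot\hat\xi$ are constant on $U$-orbits, but there are infinitely many orbit types, not finitely many. Already at a split place $\Lambda_{v,a}$ contains $(v(a)+1)(v(a)+2)/2$ orbits (Lemma \ref{algebraic lemma}), and as $t$ runs over $F_+$ the valuations $v(t)$ are unbounded at infinitely many places. Nor does summing $r(g)\Phi(y)$ against a constant over $y\in U\backslash V_f$ give a theta series of $y^\perp$. By the geometric Siegel--Weil formula such a sum is a multiple of $E_*(0,g,\Phi)$.

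More seriously, $\Pr'\mathcal J'(0,g,\Phi)$ contains the local derivative terms $2E'(0,g,\Phi)(v)$ for \emph{every} finite $v$. In your bookkeeping nothing cancels them at the infinitely many split places: you declare the finite contribution there to vanish, and you have put the heights into pseudo-theta series. So the finiteness asserted in the theorem cannot come out of your Step 3.

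\textbf{The missing idea.} The paper uses a reference-divisor decomposition (Theorem \ref{decomposition containing D}):
$\mathcal Z(y)\cdot\hat\xi=\frac{\deg_L Z(y)}{\deg_L Z(y_0)}\,\mathcal Z(y_0)\cdot\hat\xi+D(y)$.
\begin{itemize}
\item The first part sums to $-E_*(0,g,\Phi)h_{\LL}(Z(y_0))$.
\item By Lemma \ref{lemma of twisted Hodge bundle}, the normalized discrepancy $D_0(y)=D(y)/\deg_L Z(y)$ is the degree of the vertical divisor $\LL|_{\mathcal Z(y)}-\LL_y$. Here $\LL_y$ is the twisted Hodge bundle of the universal lattice-chain isogeny. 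Hence $D_0(y)=\sum_v D_0(y_v)$ is additive over places.
\item With the volume identity \eqref{geometric degree} and the local Siegel--Weil formula away from $v$, the $v$-part becomes the pseudo-Eisenstein series $\mathcal F^{(v)}_\Phi$ of \eqref{F series}. This is what gets compared with $2E'(0,g,\Phi)(v)$ place by place.
\end{itemize}

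\textbf{The local comparison.} The local value $f_{\Phi_v,a}$ is the local Hodge multiplicity of one branch, weighted by the branch count hidden in $\vol(U_v)/\vol(U_{y,v})$. The multiplicity is reduced to Zhang's curve calculation and is independent of $n$ (Lemmas \ref{independence of n}, \ref{independence of n inert place}, \ref{independence of n ramified place}).

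At split places this gives exactly $f_{\Phi_v,a}(1)=2S_{a,n}$ (Proposition \ref{Explicit f term}). So $2E'(0,g,\Phi)(v)-\mathcal F^{(v)}_\Phi$ collapses to a multiple of $E(0,g,\Phi)$ plus a compact correction. That correction is nonzero only when $|d_v|\neq 1$ and is non-singular by \cite[Lemma~2.5]{Guo1}.

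This also shows that Step 2(c) misidentifies the $B$-families. They are not an effect of exotic smoothness at ramified places. They are the mismatch $f_{\Phi_v,a}-2S_{a,n}$ between branch counting and the Whittaker derivative, and they already occur at every good inert place with self-dual lattice (Proposition \ref{Explicit f term inert}). For odd $n$, Proposition \ref{appendix inert B not smooth} shows they are not Whittaker coefficients of any smooth section. So your claim that unramified inert places match $W'_{t,v}$ up to non-singular terms is false. At ramified places the same mechanism gives $B^\vee_{a,v}$ (Proposition \ref{Explicit f term ramified}), with the type-$0$ constant $c_{n,v}$ left open for even $n$.

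\textbf{Your pieces (b) and (c).} There is no separate vertical-component term (c). In the normalized RSZ setting the finite part is simply $\sum_t(\LL|_{\mathcal Z_t})^n/\deg_L(X)$. The archimedean Green-function part (b) is taken from \cite[Section~5.2]{Guo2} and contributes $-\frac{[F:\QQ]}{n}E_*(0,g,\Phi)$. No Deligne--Lusztig or local Kudla--Rapoport computation is needed.
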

We refer to \cite[Section 2.3]{Guo1} for the complete definition of the pseudo-theta series and pseudo-Eisenstein series, and Section \ref{Comparison subsection} for explicit expressions for those pseudo-theta series and pseudo-Eisenstein series.

Now we explain the significance behind this conclusion. The arithmetic intersection $\wh{\mathcal{Z}}_*(g,\Phi)\cdot \hat{\xi}$ can be regarded as the arithmetic degree of the arithmetic generating series. While for the first term, note that according to our discussion in the first paper of this series \cite[Section 3.2]{Guo1}, the holomorphic projection of the derivative of mixed theta-Eisenstein series $I(s,g,\Phi)$ can be decomposed into two parts using the quasi-holomorphic projection, i.e., $\Pr' I'(0,g,\Phi)$ and $\Pr' \mathcal{J}'(0,g,\Phi)$. According to \cite[Theorem 1.3]{Guo2}, we have already known that $\Pr' I'(0,g,\Phi)$ is in perfect agreement with the arithmetic intersection number of the arithmetic generating series of divisors with a CM cycle. Thus, together with the above conclusion, we have actually given an explicit comparison between the derivative series defined in \cite{Guo1} and the height series defined in \cite{Guo2}.

The arithmetic Siegel--Weil formula has become a highly active topic in the study of special value formulas in recent years, and it has deep applications. For instance, C. Li and W. Zhang \cite[Theorem 1.3.1]{LZ} prove an identity between the arithmetic degree of Kudla--Rapoport cycles of full rank and the derivative of nonsingular Fourier coefficients of the incoherent Eisenstein series, and Ryan Chen \cite{Chen} (and its corresponding series of articles) extends the identity to corank 1. In fact, using their notation, our Theorem \ref{main theorem of arithmetic Siegel-Weil II} can be described as the arithmetic Siegel---Weil formula of rank 1.

From a more general perspective, the properties of derivatives of L-functions can, in turn, lead to deductions about certain properties of the Chow groups of Shimura varieties. For instance, in \cite{LL1,LL2}, C. Li and Y. Liu prove that when the derivative of a specific L-function is nonzero at a special value, certain Chow cycle of the unitary Shimura variety is non-trivial. This type of work ultimately provides strong evidence for the Birch and Swinnerton--Dyer conjecture, and even the more general Beilinson--Bloch conjecture.

\subsection{Idea of proof}
Now we introduce the organization of this paper and the ideas behind the proof of the main Theorem \ref{Modular height}.

\subsubsection*{Derivative series and height series}
We briefly review the main contents of the first two papers in this series. In \cite[Sec 1.2, Sec 3]{Guo1}, we introduce the derivative series $\Pr I'(0,g,\Phi)$, which is essentially the holomorphic projection of the derivative of a mixed theta-Eisenstein series. Furthermore, the derivative series can be divided into two parts, namely $\Pr' I'(0,g,\Phi)$ and $\Pr'\mathcal{J}'(0,g,\Phi)$. The main theorem of this paper is \cite[Theorem 1.2]{Guo1}, which gives an explicit formula of the derivative series.

In \cite[Sec 1.3, 4]{Guo2}, we introduce the height series
\begin{equation*}
    \big(\wh{\mathcal{Z}}_{*}(g,\Phi)-\frac{\deg_L(Z_*(g,\Phi))}{\deg_L(X)}\LL\big)\cdot(\mathcal{P}-\hat{\xi}),
\end{equation*}
which is essentially the arithmetic intersection number of the arithmetic generating series of divisors. We see that the height series can also be divided into two main parts: the arithmetic intersection number of the arithmetic generating series with the CM cycle $\mathcal{P}$, and its arithmetic intersection number with the Hodge cycle $\hat{\xi}$. The main theorem of this paper is \cite[Theorem 1.3]{Guo2}, which provides an explicit comparison between $\Pr' I'(0,g,\Phi)$ in the derivative series and $\wh{\mathcal{Z}}_*(g,\Phi)\cdot \mathcal{P}$ in the height series.

\subsubsection*{An analogue of the geometric Siegel--Weil formula}
In Section \ref{Arithmetic degree of generating series} and \ref{Explicit local terms} of this paper, we will study and explicitly compute $\wh{\mathcal{Z}}_*(g,\Phi)\cdot \hat{\xi}$ in the height series. The specific approach is as follows. Note that there is a ``geometric Siegel--Weil formula" of generating series on unitary Shimura varieties:
\begin{equation}\label{geometric Siegel--Weil formula}
    \deg_L(Z(g,\Phi)):=Z(g,\Phi)\cdot c_1(L)^{n-1},
\end{equation}
where $\cdot$ denotes the intersection. This formula was stated in \cite[(4.2.6)]{Guo2}, and its original source is \cite[Corollary 10.5]{Kud1}. Thus, a natural idea is to consider an analogous formula in the arithmetic setting. However, we will find that some extra terms appear; see Theorem \ref{decomposition containing D}. Computing these extra terms is the most crucial part of this paper.

In fact, compared to the computation of this part in \cite[Section 4.7]{Yuan1}, which took only a short subsection, the higher-dimensional case we are facing is much more complicated. One of the main new ingredients of this paper is the introduction of twisted Hodge bundles, which provide the higher-dimensional analogue of the arithmetic Hecke correspondence on the integral model of the Shimura curve.

\subsubsection*{The comparison}
In Section \ref{Comparison of two series}, we compare the height series
\begin{equation*}
    \big(\wh{\mathcal{Z}}_{*}(g,\Phi)-\frac{\deg_L(Z_*(g,\Phi))}{\deg_L(X)}\LL\big)\cdot(\mathcal{P}-\hat{\xi})
\end{equation*}
with the holomorphic projection of the derivative series $\Pr I'(0,g,\Phi)$. Consider the difference
\begin{equation*}
    \mathcal{D}(g,\Phi)=\Pr I'(0,g,\Phi)+\big(\wh{\mathcal{Z}}_{*}(g,\Phi)-\frac{\deg_L(Z_*(g,\Phi))}{\deg_L(X)}\LL\big)\cdot(\mathcal{P}-\hat{\xi}),
\end{equation*}
this is cuspidal. The computations of \cite[Section~4]{Guo1} and \cite[Section~5]{Guo2} decompose its nonzero Fourier expansion into a regular combination of non-singular pseudo-theta and pseudo-Eisenstein series, together with the exceptional $B$-families.  The regular part is handled by the key lemma, while Section~\ref{Contributions of singular pseudo-Eisenstein series} treats the automorphic residual formed by the $B$-families.

The key lemma \cite[Lemma~2.4]{Guo1} is applied only to the non-singular pseudo-theta and pseudo-Eisenstein part of this decomposition.  It replaces that regular part by the corresponding genuine theta and Eisenstein series and supplies the associated constant-term identity.  The exceptional $B$-families are not discarded by the key lemma: they remain inside the holomorphic automorphic residual defined in Section~\ref{Contributions of singular pseudo-Eisenstein series}, where their zero-th Fourier coefficient is proved to vanish.  Combining these two steps gives the required constant-term relation for the full difference series. By our choice of the Schwartz function in \cite[Section 4.1]{Guo1}, it then suffices to take $g=1\in\UU(\BA)$, so that $r(g)\Phi(0)\ne 0$. After explicit computation, the non-trivial relation becomes
\begin{equation*}
    \big(h_{\LL}(X)-h_{\LL}(P)-h_{\LL}(Z)+d_0 \big)r(g)\Phi(0)=0
\end{equation*}
for some constant $d_0$. Here $Z\subset X$ is a unitary Shimura variety of dimension $n-1$ defined by a Kudla special divisor and $L|_Z$ is its Hodge bundle. The arithmetic symbol $\mathcal Z$ denotes the corresponding normalized RSZ special-divisor object, and $h_{\LL}(Z)=\hat\xi\cdot\mathcal Z$ is interpreted with the normalized RSZ pairing of \cite[Section~2]{Guo2}.  Whenever the local lattice datum of $Z$ lies in the standard class used in the preceding dimension, this is exactly the inductive modular-height term.  Section~\ref{final formula section} explains how to remove the remaining auxiliary local restrictions using the fixed RSZ models.

\subsection*{Acknowledgement}
The author is deeply grateful for the valuable assistance and meticulous guidance provided by Professor Xinyi Yuan. Indeed, it is thanks to his previous work with Shou-Wu Zhang and Wei Zhang that the author has had the privilege to build upon it and make further contributions. He would like to thank his friend Weixiao Lu for much helpful advice. He thanks Ryan Chen, Yinchong Song, Liang Xiao, Yu Luo and Roy Zhao for helpful communication. He also thanks Yifeng Liu and Wei Zhang for some useful suggestions. Finally, the author is grateful to the anonymous referee for many valuable comments and suggestions on this paper.

\section{Arithmetic degree of generating series}\label{Arithmetic degree of generating series}

In this section, we study the arithmetic degree of the arithmetic generating series defined in \cite[Sec 4.3, (4.3.1)]{Guo2}, namely the following arithmetic intersection
\begin{equation}\label{divisor height series}
    \wh{\mathcal{Z}}_*(g,\Phi)\cdot \hat{\xi}=\sum_{t\in F_+}\mathcal{Z}_t\cdot\hat{\xi}=\sum_{t\in F_+}\frac{(\LL\big|_{\mathcal{Z}_t})^n}{\deg_L(X)}
\end{equation}
in the height series.

Let us briefly explain the approach taken in this part. First, we introduce some purely algebraic results as preparation for our later discussion. Then, we define a new class of arithmetic line bundles on the integral model, which we refer to as the \textit{twisted Hodge bundle}. Next, using this new definition, we transform the expression of arithmetic degree of generating series into a more intuitively version in Theorem \ref{decomposition containing D}, i.e., it looks like an arithmetic analogue of the geometric Siegel--Weil formula \ref{geometric Siegel--Weil formula} up to a series of discrepancy terms $D(y)$. The major part of this expression is the modular height of a special divisor, hence is known from induction. Finally, we use the classical Siegel--Weil formula to convert the expression of this discrepancy series defined by $D(y)$, which becomes a pseudo-Eisenstein series \ref{F series}.

We should remind the reader that we have finished all the computations at archimedean places in \cite[Sec 5.2]{Guo2}, hence in this section and the next, we only consider the expressions and computations at non-archimedean places.

\subsection{Algebraic lemmas of local Hermitian lattices}\label{Algebraic lemmas of local Hermitian lattices}
In this subsection, we introduce several algebraic lemmas describing the properties of Hermitian lattices under group actions. More specifically, let $\Lambda\subset \BV^\infty$ be the special Hermitian lattice defined in \cite[Definition 2.2]{Guo1}. For any finite place $v$ and $a\in F_v^\times$, denote by
\begin{equation*}
    \Lambda_{v,a}=\{x\in\Lambda_v|q(x)=a\},
\end{equation*}
where $q$ denotes the Hermitian norm. Our goal is to give an explicit description of  $U_v\backslash\Lambda_{v,a}$ at each finite place $v$. These lemmas are in fact very important for the definitions and computations that follow. Readers may skip this subsection on a first reading and return to it when needed later.

\subsubsection*{Algebraic lemma: split places}
We discuss case by case according to the behavior of $v$ in $E/F$. First, suppose $v$ is split in $E/F$.

\begin{lemma}\label{algebraic lemma}
    Keep all the notations as above, suppose $v$ is split in $E/F$. Then $U_v\backslash\Lambda_{v,a}$ only depends on the valuation $v(a)$, and
    \begin{equation*}
        \big|U_v\backslash\Lambda_{v,a}\big|=\frac{(v(a)+1)(v(a)+2)}{2}.
    \end{equation*}
    Especially, it does not depend on $n$ when $n\ge 1$.
\end{lemma}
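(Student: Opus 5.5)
Since $v$ splits, $E_v\cong F_v\times F_v$, and I would first make everything explicit. The lattice $\Lambda_v$ is self-dual, so its Hermitian space is $W\oplus W^{*}$ with $W=F_v^{n+1}$. Here $\Lambda_v=\mathcal O_{F_v}^{n+1}\oplus(\mathcal O_{F_v}^{n+1})^{*}$, a vector is $x=(u,w)$, and the Hermitian norm is $q(x)=w(u)\in F_v$ (up to a harmless normalization). The maximal compact $U_v$, the stabilizer of $\Lambda_v$, is identified with $\GL_{n+1}(\mathcal O_{F_v})$, acting by $g\cdot(u,w)=(gu,\,w\circ g^{-1})$. The problem becomes classifying $\GL_{n+1}(\mathcal O_{F_v})$-orbits of pairs $(u,w)\in\mathcal O^{n+1}\times(\mathcal O^{n+1})^{*}$ with $w(u)=a$. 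Write $k=v(a)\ge 0$; the set is empty for $k<0$, consistent with the formula.

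\emph{Invariants.} Let $i=v(u)$ and $j=v(w)$ be the minimal valuations of the coordinates. These are $\GL_{n+1}(\mathcal O)$-invariant, and $i+j\le k$ since $a=w(u)$ has valuation at least $v(u)+v(w)$. So the orbit map lands in the set of $(i,j)$ with $i,j\ge 0$ and $i+j\le k$, which has $(k+1)(k+2)/2$ elements. It remains to show that this map is a bijection.

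\emph{Normal form, giving injectivity.} By elementary divisors, move $u$ to $\varpi^{i}e_1$. Then $w=(w_1,w')$ with $w_1=a\varpi^{-i}$, of valuation $k-i\ge j$. The stabilizer of $e_1$ consists of matrices whose first column is $e_1$. On the dual side it fixes $w_1$ and acts on $w'$ by $w'\mapsto w'A+w_1b$, with $A\in\GL_n(\mathcal O)$ and $b\in\mathcal O^{n}$ arbitrary.
\begin{itemize}
\item If $j=k-i$, then $v(w')\ge v(w_1)$. Choose $b$ to kill $w'$, giving the normal form $(\varpi^ie_1,\,a\varpi^{-i}e_1^{*})$.
\item If $j<k-i$, then $v(w')=j$. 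Using $A$, bring $w'$ to $\varpi^{j}e_2^{*}$, giving the normal form $(\varpi^ie_1,\,a\varpi^{-i}e_1^{*}+\varpi^je_2^{*})$.
\end{itemize}
The second case needs $n\ge1$, so that $e_2$ exists. In both cases the orbit is determined by $(i,j)$.

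\emph{Surjectivity.} The same normal forms realize every admissible pair $(i,j)$. Each has the correct norm $a$ and the prescribed invariants: in the second form $v(w)=\min(k-i,j)=j$. This again uses $n\ge1$ whenever $j<k-i$.

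Combining the two steps gives $|U_v\backslash\Lambda_{v,a}|=(k+1)(k+2)/2$, which depends only on $v(a)$ and not on $n\ge1$. The only delicate point is the normal-form reduction of $w'$ modulo $w_1$ under the stabilizer. Once the stabilizer's action on the dual vector is written out as above, the reduction is a direct check.
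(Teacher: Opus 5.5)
Your proof is correct and is essentially the paper's: you identify $U_v$ with $\GL_{n+1}(\mathcal O_{F_v})$ acting by $(X,(X^{-1})^t)$, take the minimal valuations of the two coordinate vectors as the complete invariant, and reduce to a normal form after moving the first coordinate to $\varpi^{s}e_1$. You are more careful than the paper on one point: you keep the translation part $w'\mapsto w'A+w_1b$ of the stabilizer of $e_1$, which the paper's phrase ``equivalent to the action of $\GL_n$'' drops and which is needed in the boundary case $s+t=v(a)$. One aside is slightly wrong: the formula does not vanish for every $v(a)<0$ (it equals $1$ at $v(a)=-3$), so the lemma should simply be read for $v(a)\ge 0$.
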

\begin{proof}
    In fact, we can explicitly provide a set of representatives for this coset, where the discussion is independent of $n\ge 1$. Let $\{e_i\}_{1\le i\le n+1}$ be an orthonormal basis of $\Lambda_v$, i.e.,
    \begin{equation*}
        \langle e_i,e_j\rangle=\delta_{i,j}.
    \end{equation*}
    Then each $x\in \Lambda_v$ can be represented by a coordinate
    \begin{equation*}
        \Big((a_1,b_1),\cdots,(a_{n+1},b_{n+1})\Big)^t,\quad a_i,b_i\in F_v,
    \end{equation*}
    where each $(a_i,b_i)$ represents an element in $E_v\cong F_v\times F_v$. We call $x$ \textit{type} $(s,t)$ if
    \begin{equation}\label{definition of type}
        s=\min\{v(a_i)\}_{1\le i\le n+1},\quad t=\min\{v(b_i)\}_{1\le i\le n+1}.
    \end{equation}
    We claim that each $s,t\in\ZZ$ such that $0\le s,t$ and $s+t\le v(a)$, $(s,t)$ corresponds to a coset representative in $U_v\backslash\Lambda_{v,a}$. On the one hand, if $x_1,x_2\in\Lambda_{v,a}$ are in the same coset, their type must be the same. Indeed, under this orthonormal basis we have an isomorphism
    \begin{equation*}
        U_v\cong\GL_{n+1}(\mathcal{O}_{F_v}),
    \end{equation*}
    such that each element in $U_v$ can be represented by a pair of matrices
    \begin{equation}\label{action of split unitary}
        [X,(X^{-1})^t],\quad X\in\GL_{n+1}(\mathcal{O}_{F_v}),
    \end{equation}
    where $X$ acts on the first coordinate $(a_1,\cdots,a_{n+1})^t$ of $x$ and $(X^{-1})^t$ acts on the second. Therefore, from the definition, it is not difficult to see that type is invariant under the action of $U_v$.

    On the other hand, if $x_1,x_2\in\Lambda_{v,a}$ have the same type, they are in the same coset. In fact, any $x$ with type $(s,t)$ is equivalent to a vector with coordinate
    \begin{equation*}
        \Big((p_v^s,a/p_v^s),(0,p_v^t),(0,0),\cdots,(0,0)\Big)^t.
    \end{equation*}
    The proof is purely linear algebra and straightforward. We can always use the action of $X$, assuming without loss of generality that the first coordinate is $(p_v^s,0,\cdots,0)$. Then we consider those $X\in\GL_{n+1}(\mathcal{O}_{F_v})$ that preserve the first coordinate, $(X^{-1})^t$ fixes the first term of the second coordinate $(b_1,\cdots,b_{n+1})$, while the action of $(X^{-1})^t$ on the remaining coordinates is equivalent to the action of $\GL_n(\mathcal{O}_{F_v})$. This finishes the proof of claim.
\end{proof}

Note that according to the above proof, for any $x\in\Lambda_v$, the Hermitian norm $q(x)$ and the type $(s,t)$ determine the orbit of $x$ by $U_v$-action. We also have the following corollary characterizing the orthogonal complement of each $x$.

\begin{corollary}\label{lattice corollary split}
    Denote by $V_x^\perp\subset V$ the orthogonal complement of $x$. Fix an orthonormal basis $\{e_i\}_{1\le i\le n+1}$ of $\Lambda_v$, and assume that $x$ is a linear combination of $e_1$ and $e_2$. Then $\Lambda_v\cap V_{x,v}^\perp$ is generated by $\{x^\perp\}\cup\{e_i\}_{3\le i\le n+1}$, where $x^\perp$ is also a linear combination of $e_1$ and $e_2$ such that $\langle x,x^\perp\rangle=0$ and $q(x)$ is divisible by $q(x^\perp)$. Moreover, under the notation of Lemma \ref{algebraic lemma}, if $x$ is type $(s,t)$, then $v(q(x^\perp))=v(q(x))-s-t$.
\end{corollary}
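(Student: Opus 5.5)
The plan is to compute everything explicitly in coordinates at the split place, using the identification $E_v\cong F_v\times F_v$ from the proof of Lemma~\ref{algebraic lemma}. Under this identification the nontrivial automorphism of $E_v/F_v$ swaps the two factors. For $x=\sum_i(a_i,b_i)e_i$ and $y=\sum_i(c_i,d_i)e_i$, the Hermitian pairing is therefore
\begin{equation*}
\langle x,y\rangle=\sum_i (a_i,b_i)\overline{(c_i,d_i)}=\Big(\sum_i a_id_i,\ \sum_i b_ic_i\Big).
\end{equation*}
In particular $q(x)$ corresponds to $\sum_i a_ib_i\in F_v$. I would first check this convention against the norm used in Lemma~\ref{algebraic lemma}, where $q$ of the normal-form vector is $p_v^s\cdot a/p_v^s=a$.

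Assume $x=(a_1,b_1)e_1+(a_2,b_2)e_2$. The condition $\langle x,y\rangle=0$ involves only the $e_1,e_2$-coordinates of $y$, so $e_3,\dots,e_{n+1}$ lie in $V_{x,v}^\perp$. Since the basis is orthonormal, this gives
\begin{equation*}
\Lambda_v\cap V_{x,v}^\perp=\big(\Lambda_v\cap\mathrm{span}(e_1,e_2)\cap V_{x,v}^\perp\big)\oplus\bigoplus_{i\ge3}\mathcal{O}_{E_v}e_i.
\end{equation*}
It then suffices to show that the first summand is a free $\mathcal{O}_{E_v}$-module of rank one, and to exhibit a generator $x^\perp$.

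The orthogonality conditions $a_1d_1+a_2d_2=0$ and $b_1c_1+b_2c_2=0$ decouple into one linear equation on $(d_1,d_2)\in\mathcal{O}_{F_v}^2$ and one on $(c_1,c_2)\in\mathcal{O}_{F_v}^2$. Here $s=\min(v(a_1),v(a_2))$ and $t=\min(v(b_1),v(b_2))$ by the definition of type. The saturated solution lattice of the first equation is $\mathcal{O}_{F_v}\cdot(-a_2,a_1)/p_v^s$, since this vector is primitive. Likewise the solution lattice of the second is $\mathcal{O}_{F_v}\cdot(-b_2,b_1)/p_v^t$. If $x$ is degenerate in one factor, for instance all $a_i=0$, that solution lattice is all of $\mathcal{O}_{F_v}^2$. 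This case does not occur here, because $q(x)=a\neq0$ forces both $s$ and $t$ to be finite.

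I would then set
\begin{equation*}
x^\perp=\big(-b_2/p_v^t,\,-a_2/p_v^s\big)e_1+\big(b_1/p_v^t,\,a_1/p_v^s\big)e_2 .
\end{equation*}
This vector generates the rank-one summand over $\mathcal{O}_{E_v}=\mathcal{O}_{F_v}\times\mathcal{O}_{F_v}$, and it is orthogonal to $x$ by construction. Its norm is
\begin{equation*}
q(x^\perp)=\frac{a_2b_2+a_1b_1}{p_v^{s+t}}=\frac{q(x)}{p_v^{s+t}} .
\end{equation*}
This gives $v(q(x^\perp))=v(q(x))-s-t$, and the divisibility $q(x^\perp)\mid q(x)$ follows because $s,t\ge0$ for $x\in\Lambda_v$.

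Alternatively, one can first move $x$ to the normal form $\big((p_v^s,a/p_v^s),(0,p_v^t)\big)$ of Lemma~\ref{algebraic lemma} and do the computation there. The statement is $U_v$-equivariant, since the type is invariant, so this is also legitimate. However, the direct computation avoids having to check that the normalizing element of $U_v$ keeps $x$ inside $\mathrm{span}(e_1,e_2)$.

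The only step requiring care is the primitivity/saturation argument: I must verify that $\Lambda_v\cap V_{x,v}^\perp$ equals the full saturated sublattice and is not just generated up to finite index. This reduces to the elementary fact that over the DVR $\mathcal{O}_{F_v}$ the kernel of a nonzero linear form $(d_1,d_2)\mapsto a_1d_1+a_2d_2$ on $\mathcal{O}_{F_v}^2$ is generated by $(-a_2,a_1)$ divided by its content $p_v^{\min v(a_i)}$.
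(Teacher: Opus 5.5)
Your proof is correct, but it takes a somewhat different route from the paper's.

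\textbf{The paper's route.} The paper argues in one line. By the proof of Lemma~\ref{algebraic lemma}, every $U_v$-orbit contains the normal-form vector $\big((p_v^s,a/p_v^s),(0,p_v^t),(0,0),\dots\big)^t$. For that vector the orthogonal complement and the value $q(x^\perp)=a/p_v^{s+t}$ can be read off by inspection. The hypothesis $x\in\mathrm{span}(e_1,e_2)$ is then declared to be without loss of generality.

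\textbf{Your route.} You solve the two decoupled equations $a_1d_1+a_2d_2=0$ and $b_1c_1+b_2c_2=0$ over the DVR $\mathcal{O}_{F_v}$ for an arbitrary $x$ in the span. This gives an explicit generator and the exact identity $q(x^\perp)=p_v^{-(s+t)}q(x)$, without any appeal to the orbit classification.

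\textbf{Comparison.} Your version proves the statement literally, for the fixed basis. The paper's reduction tacitly needs the normalizing element of $U_v$ to act only on $\mathrm{span}(e_1,e_2)$ and to fix $e_3,\dots,e_{n+1}$. This is true, because the normalization in Lemma~\ref{algebraic lemma} can be carried out inside $\GL_2(\mathcal{O}_{F_v})$ on the first two coordinates, and you are right to flag the point. In return, the paper's route is shorter and ties the corollary directly to the type classification used later. As a consistency check, for $x=ke_1+le_2$ your $x^\perp$ equals $-(p_v^{-t},p_v^{-s})\cdot(\bar l e_1-\bar k e_2)$. So it is the generator of $\Lambda_v\cap E_v(\bar l e_1-\bar k e_2)$, where $\bar l e_1-\bar k e_2$ is the orthogonal vector used in Proposition~\ref{stabilizer comparison proposition}.
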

\begin{proof}
    This proof follows directly from the above lemma. Note that the assumption on $x$ is without loss of generality, because in any $U_v$-orbit of $x$ there exists such a representative.
\end{proof}

\subsubsection*{Algebraic lemma: inert places}
Now we consider the case when $v$ is inert in $E/F$. The following lemma is the counterpart of Lemma \ref{algebraic lemma} for inert places.

\begin{lemma}\label{algebraic lemma inert case}
    Suppose $v$ is inert in $E/F$. Then $U_v\backslash \Lambda_{v,a}$ only depends on the valuation $v(a)$, and
    \begin{equation*}
        \big|U_v\backslash \Lambda_{v,a}\big|=1+[\frac{v(a)}{2}],
    \end{equation*}
    where $[\cdot]$ is the floor function. Especially, it does not depend on $n$ when $n\ge 1$.
\end{lemma}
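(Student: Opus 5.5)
The plan mirrors the split case (Lemma \ref{algebraic lemma}): attach to each $x\in\Lambda_{v,a}$ a discrete invariant that is constant on $U_v$-orbits, show that the invariant together with $q(x)$ determines the orbit, and then count the possible values of the invariant. At an inert place, $E_v/F_v$ is an unramified quadratic field extension. $\Lambda_v$ is self-dual, so it has an orthonormal basis $\{e_i\}_{1\le i\le n+1}$ over $\mathcal{O}_{E_v}$, and $U_v=\U(\Lambda_v)$.

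The invariant is the \emph{divisibility} $s(x)=\min_i v_E(x_i)$, where $x=\sum x_ie_i$ and $v_E$ is the normalized valuation of $E_v$. Since $v$ is unramified, $v_E$ restricted to $F_v$ equals $v$, and $\varpi_v$ is also a uniformizer of $E_v$. Equivalently, $s(x)$ is the largest $s$ with $x\in\varpi_v^s\Lambda_v$, which is visibly $U_v$-invariant.

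Write $x=\varpi_v^s x_0$ with $x_0$ primitive. Then $q(x)=\varpi_v^{2s}q(x_0)$ and $q(x_0)\in\mathcal{O}_{F_v}$, so $2s\le v(a)$. This gives the constraint $0\le s\le [v(a)/2]$, i.e. $1+[v(a)/2]$ possible values.

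For each such $s$ the value is attained. Since $\mathrm{Nm}:\mathcal{O}_{E_v}^\times\to\mathcal{O}_{F_v}^\times$ is surjective for an unramified extension, some primitive vector has norm $b=a\varpi_v^{-2s}$: if $v(b)=0$ take $x_0=ue_1$ with $\mathrm{Nm}(u)=b$; otherwise, using $n\ge1$, take $x_0=e_1+ye_2$ with $\mathrm{Nm}(y)=b-1$, which is a unit.

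The main step is to show that two primitive vectors $x_0,x_0'\in\Lambda_v$ with $q(x_0)=q(x_0')=b$ lie in the same $U_v$-orbit. I would do this with Witt's theorem over $\mathcal{O}_{E_v}$ in its lattice form. Because $x_0$ is primitive, $\mathcal{O}_{E_v}x_0$ is a direct summand of $\Lambda_v$. Since $\Lambda_v$ is unimodular, there is $w\in\Lambda_v$ with $\langle x_0,w\rangle=1$. The sublattice $M=\mathcal{O}_{E_v}x_0+\mathcal{O}_{E_v}w$ then has Gram matrix with unit determinant up to adjusting $w$; when $v(b)>0$ it is a hyperbolic plane after a change of $w$. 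Unimodularity of $M$ gives the splitting $\Lambda_v=M\oplus M^\perp$, with $M^\perp$ unimodular of rank $n-1$. I would then show the following:
\begin{enumerate}[(i)]
\item Any pair $(x_0,w)$ with $q(x_0)=b$ and $\langle x_0,w\rangle=1$ can be normalized so that the Gram matrix of $M$ is a fixed matrix depending only on $b$. For this, replace $w$ by $w+\lambda x_0$ and solve for the trace condition; this is solvable since $\mathrm{Tr}$ is surjective on $\mathcal{O}_{E_v}$ in the unramified case.
\item Unimodular Hermitian lattices over unramified $\mathcal{O}_{E_v}$ are classified by rank alone, so $M^\perp\cong M'^\perp$.
\end{enumerate}
Gluing these isometries gives $g\in U_v$ with $gx_0=x_0'$. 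The case $v(b)=0$ is easier still: $\mathcal{O}_{E_v}x_0$ is itself unimodular and splits off.

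I expect the obstacle to be step (i), the normalization of the Gram matrix of $M$ when $v(b)>0$; this is where unramifiedness enters. It is also the only place where the rank condition $n\ge1$ enters, namely to guarantee that a vector $w$ exists. Since the count $1+[v(a)/2]$ involves no dimension parameter, the resulting orbit count is independent of $n$.
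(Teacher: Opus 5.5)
Your proof is correct in outline and follows the paper's approach. The paper only sketches this lemma. Your divisibility $s(x)$ is equivalent to its ``type'': a vector of type $s$ lies in $p_v^{(r-s)/2}\Lambda_v\setminus p_v^{(r-s)/2+1}\Lambda_v$, so the paper's type equals $r-2s(x)$. The transitivity on primitive vectors of a fixed norm is the integral Witt extension statement, which the paper attributes to Morita and does not prove. The following parts of your argument are fine:
\begin{enumerate}[(a)]
\item the invariance of $s(x)$ and the range $0\le s\le[v(a)/2]$;
\item the existence of representatives;
\item the splitting $\Lambda_v=M\oplus M^\perp$;
\item the classification of unimodular lattices by rank, which also holds in residue characteristic $2$ in the unramified case.
\end{enumerate}

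Step (i) does not work as written. Replacing $w$ by $w+\lambda x_0$ destroys the normalization you want to keep, since $\langle x_0,w+\lambda x_0\rangle=1+\bar\lambda b\neq1$. The correct substitution is $w_1=\alpha x_0+(1-\alpha b)w$. This keeps $\langle x_0,w_1\rangle=1$, and $\{x_0,w_1\}$ is still a basis of $M$ because $1-\alpha b$ is a unit. With $c=q(w)$ one finds
\[
 q(w_1)=c+(1-bc)\bigl(\mathrm{Tr}(\alpha)-b\,\mathrm{Nm}(\alpha)\bigr).
\]
So $q(w_1)=0$ is a pure trace condition only modulo $\mathfrak{p}_v$; trace surjectivity alone does not solve it exactly. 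To solve it, take $\alpha=t\alpha_0$ with $\mathrm{Tr}(\alpha_0)=1$, where $t\in\mathcal{O}_{F_v}$ is a root of
\[
 t-b\,\mathrm{Nm}(\alpha_0)t^2=-c/(1-bc).
\]
Such a root exists by Hensel's lemma, because $v(b)>0$ makes $1-bc$ and the derivative $1-2b\,\mathrm{Nm}(\alpha_0)t$ units. This normalizes the Gram matrix of $M$ to $\begin{pmatrix} b&1\\1&0\end{pmatrix}$, and your gluing then goes through.

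A minor correction: $n\ge1$ is needed for the existence of primitive vectors with $v(b)>0$, as you correctly used earlier. It is not needed for the existence of $w$, which follows from unimodularity in any rank.
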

\begin{proof}
    The proof is very similar to our proof of Lemma \ref{algebraic lemma}, and is known to the experts, hence we omit the detailed proof. A good reference is \cite{Mor}. In fact, it is not hard to find a set of representatives for this coset as follows. As an analogue of the claim in Corollary \ref{lattice corollary split}, for each $x\in\Lambda_v$, we call $x$ \textit{type} $s$ if the orthogonal complement of $x$ in $\Lambda_v$ is generated by an orthogonal basis $\{x^\perp\}\cup\{e_i\}_{3\le i\le n+1}$ such that $v(q(x^\perp))=s$ and $q(e_i)=1$ for any $3\le i\le n+1$. Here $s\ge 0$ is an integer, and it is trivial to check that $(r-s)/2\ge 0$ is an integer. Clearly, the definition of type for each $x$ is unique, and $U_v$-action preserves the type. It is easy to check that for any $x$ of type $s$,
    \begin{equation*}
        x\in p_v^{(r-s)/2}\Lambda_v,\quad x\notin p_v^{(r-s)/2+1}\Lambda_v,
    \end{equation*}
    Then it is also not hard to check that $x_r, x_{r-2},\cdots, x_{\frac{1-(-1)^r}{2}}$ forms a set of representatives of $U_v\backslash \Lambda_{v,a}$, where $x_i$ has type $i$ with Hermitian norm $a$.
\end{proof}

Note that we can also give a corollary analogous to Corollary \ref{lattice corollary split} at each inert place.

\subsubsection*{Algebraic lemma: ramified places}
Now we consider the case when $v$ is ramified in $E/F$. Recall that in this case the local Hermitian lattice $\Lambda_v$ is $\varpi_{E_v}$-modular when $2\nmid n$, and is almost $\varpi_{E_v}$-modular when $2| n$. The following lemma is the counterpart of Lemma \ref{algebraic lemma} and \ref{algebraic lemma inert case} at ramified places.

\begin{lemma}\label{algebraic lemma ramified}

Suppose that $v$ is ramified in $E/F$, let $a\in F_v^\times$, and assume that
the indicated norm fibre is nonempty.  By the standing hypotheses of
\cite[Assumption~2.4]{Guo2}, the residue characteristic of $v$ is odd.
\begin{enumerate}
\item If $n$ is odd, then
$$
 \big|U_v\backslash\Lambda_{v,a}\big|=v(a),
 \qquad
 \big|U_v\backslash\Lambda^\vee_{v,a}\big|=v(a)+1.
$$
In particular the orbit set depends only on $v(a)$.

\item If $n$ is even, put
$$
 \epsilon(\Lambda_v):=(-1)^{n/2}d_{\BV}
       \quad\text{in }F_v^\times/\Nm(E_v^\times).
$$
Then
$$
 \big|U_v\backslash\Lambda_{v,a}\big|=
 \begin{cases}
 v(a),&a\notin \epsilon(\Lambda_v)\Nm(E_v^\times),\\
 v(a)+1,&a\in \epsilon(\Lambda_v)\Nm(E_v^\times),
 \end{cases}
$$
and, for the stabilizer of the dual lattice,
$$
 \big|U_v\backslash\Lambda^\vee_{v,a}\big|=
 \begin{cases}
 v(a)+1,&a\notin \epsilon(\Lambda_v)\Nm(E_v^\times),\\
 v(a)+2,&a\in \epsilon(\Lambda_v)\Nm(E_v^\times).
 \end{cases}
$$
Here, as elsewhere in this subsection, the notation $U_v$ for the dual lattice
means its own integral stabilizer.
\end{enumerate}

\end{lemma}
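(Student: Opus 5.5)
Following the proofs of Lemma \ref{algebraic lemma} and Lemma \ref{algebraic lemma inert case}, I would attach to each vector $x\in\Lambda_{v,a}$ a complete $U_v$-invariant, namely the Jordan type of the orthogonal complement $\Lambda_v\cap V_{x,v}^\perp$ together with the divisibility of $x$ by powers of the uniformizer $\varpi=\varpi_{E_v}$, and then count the admissible types. Since the residue characteristic is odd, Jacobowitz's classification of Hermitian lattices over ramified quadratic extensions is available. Any Hermitian lattice splits as an orthogonal sum of rank-one pieces $\langle u\varpi^{2k}\rangle$, which are norm-type, and rank-two hyperbolic pieces $\varpi^{2k+1}H$. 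The isometry class is determined by the Jordan ranks together with, for each even-scale component, the determinant class in $F_v^\times/\Nm(E_v^\times)$.

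\emph{Step 1: normal form of $\Lambda_v$.} For $n$ odd, $n+1$ is even and $\Lambda_v\cong(\varpi H)^{(n+1)/2}$ is $\varpi$-modular. For $n$ even, $\Lambda_v\cong(\varpi H)^{n/2}\perp\langle u\rangle$, and $\epsilon(\Lambda_v)$ records the class of $u$ modulo norms; this class is fixed by $d_{\BV}$ and the sign $(-1)^{n/2}$. The dual lattices are $\Lambda_v^\vee=\varpi^{-1}\Lambda_v$ in the odd case and $(H)^{n/2}\perp\langle u\rangle$ in the even case.

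\emph{Step 2: reduction to a rank-$\le 3$ problem.} By Witt's theorem for lattices, in its integral form valid for odd residue characteristic, the orbit of a primitive-up-to-scaling vector $x=\varpi^k x_0$ is determined by $k$, $q(x)$, and the isometry class of $x_0^\perp$ inside the lattice. As in Corollary \ref{lattice corollary split}, I would move $x_0$ into a single hyperbolic block, or into a hyperbolic block plus $\langle u\rangle$. The problem then becomes the following: classify the vectors $y$ with $q(y)=a/\Nm(\varpi)^k$ in $\varpi H$, or in $\varpi H\perp\langle u\rangle$, that are primitive in the sense of not lying in $\varpi(\cdot)$, up to the block's stabilizer.

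\emph{Step 3: counting.} In $\varpi H$ the norms of primitive vectors have valuation $\ge 1$, and every such value occurs exactly once up to isometry. The orbits are therefore indexed by $k$ with $0\le 2k$ and $v(a)-k\ge1$, or by the analogous constraint coming from $\Nm(\varpi)=-\varpi^2$ in $F$-valuation terms. Writing $\varpi^k$ with $v_F(\Nm\varpi^k)=k$, this yields $k=0,\dots,v(a)-1$, which is $v(a)$ orbits. For $\Lambda^\vee=\varpi^{-1}\Lambda$ the admissible range shifts by one, giving $v(a)+1$. In the even case, the extra summand $\langle u\rangle$ allows one more primitive class, namely vectors whose $\langle u\rangle$-component is a unit. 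Such a vector has unit-scaled norm in $u\Nm(E_v^\times)$. Hence at exactly one level $k$ an additional orbit appears precisely when $a\in\epsilon(\Lambda_v)\Nm(E_v^\times)$, and this gives the stated $v(a)$ versus $v(a)+1$ dichotomy. The dual case is handled identically with the shift by one.

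\emph{Main obstacle.} I expect the hardest step to be the integral Witt theorem in the ramified case, that is, showing that the pair $(k,\text{norm class})$ is a \emph{complete} invariant. In particular, two primitive vectors of equal norm in $\varpi H\perp\langle u\rangle$, one with a unit $u$-component and one without, must be shown to lie in different orbits. To establish completeness, I would first use the reduction map $\Lambda_v\to\Lambda_v/\varpi\Lambda_v$ to separate orbits. I would then construct explicit unitary transformations by a Hensel-lifting argument, which is where odd residue characteristic is used.
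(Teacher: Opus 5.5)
Your route is the paper's: Jacobowitz's Jordan forms, an integral Witt or rank-two reduction that moves a primitive vector into the first block, and a count of $x=\varpi^k x_0$ by the valuation of $q(x_0)$. Your counts for $\Lambda_v$ in both parities, and for $\Lambda_v^\vee$ with $n$ odd, are right. The genuine error is in Step~1 for $\Lambda_v^\vee$ when $n$ is even. In your own convention, $\varpi^{2k+1}H$ is the hyperbolic plane of scale $\varpi^{2k+1}$, so $(H)^{n/2}\perp\langle u\rangle$ would be a unimodular lattice. The dual is in fact $\langle u\rangle\perp(\varpi^{-1}H)^{n/2}$: dualizing the $\varpi$-modular plane gives a $\varpi^{-1}$-modular plane, while the line $\langle u\rangle$ is self-dual. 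This is not cosmetic. In a unimodular lattice of rank $n+1\ge 3$, all primitive vectors of a given norm form one orbit, since a unit-norm vector splits off a unimodular line whose complement is fixed by rank and determinant. So your stated form would give $v(a)+1$ orbits in both norm classes and would miss the $v(a)+2$. With the correct form, hyperbolic-type primitive vectors have norm valuation $\ge 0$, so $k=0,\dots,v(a)$ gives $v(a)+1$ orbits. The line then adds one more orbit at $k=v(a)$ exactly when $a\in\epsilon(\Lambda_v)\Nm(E_v^\times)$. Your ``shift by one'' applies to the hyperbolic block only, not to the whole lattice.

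Relatedly, your ``main obstacle'' is placed in the wrong lattice. In $\Lambda_v=\langle u\rangle\perp(\varpi H)^{n/2}$, a primitive vector with unit $u$-component has unit norm, and one with non-unit $u$-component has norm of positive valuation. They never share a norm, so nothing needs separating there. Separation is needed in $\Lambda_v^\vee$ at $k=v(a)$, where both kinds have unit norm in the class $\epsilon(\Lambda_v)$. It follows at once from the invariant ideal $\langle x_0,\Lambda_v^\vee\rangle$, which is $\varpi^{-1}\mathcal O_{E_v}$ for the hyperbolic type and $\mathcal O_{E_v}$ for the line type. This is exactly the distinctness the paper records for the dual lattice. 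The real content of the Witt step is transitivity, in three parts. First, a primitive $x_0$ of scale $\mathfrak s$ lies in a rank-one or rank-two $\mathfrak s$-modular sublattice that splits off orthogonally. Second, its complement is fixed by Jacobowitz's invariants. Third, inside a hyperbolic plane the primitive vectors of a given norm form one orbit; this is a Hensel argument that makes the partner vector isotropic and uses $2\in\mathcal O_{F_v}^\times$. Finally, your formulation ``orbit determined by $k$, $q(x)$ and the isometry class of $x_0^\perp$'' is not a valid general form of integral Witt, because of the gluing between $\mathcal O_{E_v}x$ and $x^\perp$; you do not actually need it.
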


\begin{proof}

We use the standard Jordan normal forms for Hermitian lattices over a
ramified quadratic extension of odd residue characteristic. See \cite{Jac}. We also use the
integral Witt extension theorem for modular lattices \cite{Mor}.  In the
present situation the latter can equivalently be implemented by
the symmetry generators of the integral unitary group. See \cite{BHM}.
The consequence that we need is the following elementary rank-two reduction.

\smallskip
\noindent
\emph{Rank-two reduction.}
Let $H_\varpi=\mathcal O_{E_v}X\oplus\mathcal O_{E_v}Y$ with
$\langle X,Y\rangle=\varpi_{E_v}$ and $q(X)=q(Y)=0$.
In a lattice of the form
$$
 H_\varpi^{\perp m}
 \quad\text{or}\quad
 \mathcal O_{E_v}e\perp H_\varpi^{\perp m},
 \qquad q(e)\in\mathcal O_{F_v}^{\times},
$$
every primitive vector of positive norm valuation is carried, by the integral
unitary group, to a vector in the first copy of $H_\varpi$.  Two such
primitive vectors of the same Hermitian norm are equivalent.  In the second
lattice, primitive vectors of unit norm form one additional orbit precisely
for the norm class represented by $q(e)$.  For the dual lattice
$$
 \mathcal O_{E_v}e\perp
 \varpi_{E_v}^{-1}H_\varpi^{\perp m}
$$
there is one further boundary orbit coming from
$\varpi_{E_v}^{-1}H_\varpi$.
Indeed, the Jordan decomposition reduces a primitive vector to the first
nonzero Jordan block.  On the hyperbolic block the required reduction is
two-dimensional. The isometry between the resulting rank-one summands
extends to the whole lattice by \cite{Mor}.  For the almost $\varpi_{E_v}$-modular lattice
the self-dual line is the only Jordan constituent of scale
$\mathcal O_{E_v}$, so a primitive unit vector can occur only in its norm
class.  This also shows that the line orbit and the boundary hyperbolic orbit
in the dual lattice are distinct.  This is exactly the orbit description
obtained from the standard Jordan forms of \cite{Jac}.

We now count.  First assume that $n$ is odd.  Then
$$
 \Lambda_v\simeq H_\varpi^{\perp (n+1)/2}.
$$
Write $r=v(a)$.  Every $x\in\Lambda_{v,a}$ can be written uniquely in the
form
$$
 x=\varpi_{E_v}^{\,k}x_0,\qquad x_0\ \text{primitive}.
$$
Since $E_v/F_v$ is ramified,
$$
 v\!\left(\Nm(\varpi_{E_v})\right)=1,
$$
and hence
$$
 v(q(x_0))=r-k.
$$
A primitive vector in a $\varpi_{E_v}$-modular hyperbolic lattice has
positive norm valuation.  Thus
$$
 1\le v(q(x_0))\le r.
$$
Conversely every value $j=1,\ldots,r$ occurs already in one hyperbolic
plane.  By the rank-two reduction there is exactly one $U_v$-orbit for each
$j$.  Equivalently, with the type convention
$$
 s=j-1,
 \qquad
 x\in\varpi_{E_v}^{\,r-s-1}\Lambda_v
       \setminus\varpi_{E_v}^{\,r-s}\Lambda_v,
$$
the possible types are $s=0,\ldots,r-1$.  This gives
$|U_v\backslash\Lambda_{v,a}|=r$.

Since a $\varpi_{E_v}$-modular lattice satisfies
$\Lambda_v^\vee\simeq\varpi_{E_v}^{-1}\Lambda_v$ up to a unit homothety,
multiplication by $\varpi_{E_v}$ identifies the norm-$a$ problem in
$\Lambda_v^\vee$ with the norm-$\Nm(\varpi_{E_v})a$ problem in
$\Lambda_v$.  The latter has valuation $r+1$, and therefore
$$
 |U_v\backslash\Lambda^\vee_{v,a}|=r+1.
$$

Now assume that $n$ is even.  The standard almost $\varpi_{E_v}$-modular Jordan
decomposition is
$$
 \Lambda_v\simeq
 \mathcal O_{E_v}e\perp H_\varpi^{\perp n/2},
 \qquad
 q(e)\in
 (-1)^{n/2}d_{\BV}\Nm(\mathcal O_{E_v}^{\times}).
$$
The primitive orbits with positive norm valuation are exactly the hyperbolic orbits from
the preceding argument.  Hence they contribute one orbit for each
$j=1,\ldots,r$, namely $r$ orbits.  There is in addition a primitive
unit-depth orbit if and only if
$$
 a\in (-1)^{n/2}d_{\BV}\Nm(E_v^\times),
$$
because multiplication by $\varpi_{E_v}^{r}$ changes the norm only by a norm
from $E_v^\times$.  The rank-two reduction shows that this unit-depth orbit
is unique.  This proves the formula for $\Lambda_v$.

Finally,
$$
 \Lambda_v^\vee\simeq
 \mathcal O_{E_v}e\perp
 \varpi_{E_v}^{-1}H_\varpi^{\perp n/2}.
$$
The scaled hyperbolic block supplies one additional boundary orbit for every
norm class.  If the norm class is represented by the self-dual line, the
line orbit is a second type-$0$ orbit. Otherwise it is absent. Hence the
dual counts are respectively $r+1$ and $r+2$, as claimed.

\end{proof}

\begin{corollary}\label{algebraic corollary ramified}

Suppose that $v$ is ramified, $a\in F_v^\times$, $r=v(a)$, and
$x\in\Lambda_{v,a}$ has type $s$ in the preceding proof.
\begin{enumerate}
\item If $n$ is odd, then $\Lambda_v\cap V_{x,v}^{\perp}$ has an orthogonal
basis
$$
 \{e_x,X_1,Y_1,\ldots,
 X_{(n-1)/2},Y_{(n-1)/2}\},
$$
where the hyperbolic part is $\varpi_{E_v}$-modular of rank $n-1$ and

$$
 q(e_x)=(-1)^{r-s}\varpi_{E_v}^{-2r+2s+2}a.
$$
In particular, the $F_v$-valuation of $q(e_x)$ is $s+1$.

\item If $n$ is even and $s=0$, then
$\Lambda_v\cap V_{x,v}^{\perp}$ is a rank-$n$
$\varpi_{E_v}$-modular lattice.  If $s>0$, it has an orthogonal basis
$$
 \{e_x,e,X_1,Y_1,\ldots,
 X_{(n-2)/2},Y_{(n-2)/2}\},
$$
where the sublattice generated by
$$
 \{e,X_1,Y_1,\ldots,
 X_{(n-2)/2},Y_{(n-2)/2}\}
$$
is almost $\varpi_{E_v}$-modular of rank $n-1$, and

$$
 q(e_x)=(-1)^{r-s}\varpi_{E_v}^{-2r+2s+2}a.
$$
In particular, the $F_v$-valuation of $q(e_x)$ is $s+1$.

\end{enumerate}
The analogous statement for $\Lambda_v^\vee$ is obtained by replacing the
hyperbolic modular block by its $\varpi_{E_v}^{-1}$-homothety. In the
matching norm class the two type-$0$ dual orbits give the two possible
maximal orthogonal-complement types.

\end{corollary}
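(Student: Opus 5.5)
The plan is to read the corollary off the orbit description in the proof of Lemma~\ref{algebraic lemma ramified}, just as Corollary~\ref{lattice corollary split} was read off Lemma~\ref{algebraic lemma}. The orthogonal complement $\Lambda_v\cap V_{x,v}^\perp$ depends only on the $U_v$-orbit of $x$, and each orbit is determined by $q(x)=a$ and the type $s$. So it suffices to compute with one explicit representative per type.

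For $n$ odd I would write $\Lambda_v=H_\varpi\perp H_\varpi^{\perp(n-1)/2}$. By the rank-two reduction I may take $x=\varpi_{E_v}^{k}x_0$ with $x_0$ primitive in the first copy $\mathcal O_{E_v}X\oplus\mathcal O_{E_v}Y$, where $k=r-s-1$ and $v(q(x_0))=s+1$. After a unit rescaling, write $x_0=X+cY$, so that
\[
q(x_0)=\mathrm{Tr}_{E_v/F_v}(\bar c\,\varpi_{E_v})
\]
up to the normalization of $\langle\,,\rangle$. The complement of $x$ is the complement of $x_0$. It is the direct sum of the untouched $H_\varpi^{\perp(n-1)/2}$, which is already $\varpi_{E_v}$-modular of rank $n-1$, and the rank-one lattice $e_x\mathcal O_{E_v}$. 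Here $e_x:=X-\bar c'Y$ is the primitive vector of the plane orthogonal to $x_0$, with $c'$ chosen so that $\langle x_0,e_x\rangle=0$. Primitivity of $e_x$ in the plane follows from primitivity of $x_0$, so $\mathcal O_{E_v}e_x$ is the full intersection.

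A direct $2\times2$ computation then gives $q(e_x)=-\bar\varpi/\varpi\cdot q(x_0)$ up to norms of units. Since $\bar\varpi_{E_v}=-\varpi_{E_v}$ in odd residue characteristic for a suitable uniformizer, this is $\pm q(x_0)$. I would also rewrite $q(x_0)$ as $\Nm(\varpi_{E_v})^{-k}a=(-1)^{k}\varpi_{E_v}^{-2k}a$. Substituting $k=r-s-1$ gives exactly the displayed formula $(-1)^{r-s}\varpi_{E_v}^{-2r+2s+2}a$, with valuation $s+1$. One sign is absorbed by the $-\bar\varpi/\varpi$ factor; tracking this precisely is the one place the normalization of $\varpi_{E_v}$ and of $\Nm$ must be fixed carefully.

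For $n$ even I would use $\Lambda_v=\mathcal O_{E_v}e\perp H_\varpi^{\perp n/2}$ and split on the type.
\begin{itemize}
\item If $s=0$, the vector $x$ lies in the unit-depth orbit, so we may take $x=\varpi_{E_v}^{r}e$. Its complement is then $H_\varpi^{\perp n/2}$, which is $\varpi_{E_v}$-modular of rank $n$.
\item If $s>0$, the rank-two reduction places $x$ in the first hyperbolic plane. The computation above then applies verbatim, and the complement is $\mathcal O_{E_v}e_x\perp\bigl(\mathcal O_{E_v}e\perp H_\varpi^{\perp(n-2)/2}\bigr)$. The second summand is almost $\varpi_{E_v}$-modular of rank $n-1$, and $q(e_x)$ is as before.
\end{itemize}

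For $\Lambda_v^\vee$, the hyperbolic block is replaced by $\varpi_{E_v}^{-1}H_\varpi$. The same computations go through after multiplying by $\varpi_{E_v}$, which is the scaling already used in the lemma's proof. In the matching norm class there are two type-$0$ orbits. The line orbit $x\in\mathcal O_{E_v}e$ has complement the scaled hyperbolic lattice. The boundary orbit from $\varpi_{E_v}^{-1}H_\varpi$ has complement the rank-one line plus the remaining blocks. These are the two maximal complement types claimed.

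The main obstacle is not conceptual but bookkeeping. The sign $(-1)^{r-s}$ and the exponent of $\varpi_{E_v}$ in $q(e_x)$ depend on normalizations: the choice of uniformizer with $\bar\varpi=-\varpi$, and the convention for the Hermitian form on $H_\varpi$. I would fix these first, verify the formula for the representative of one type, and then propagate it by the scaling $x=\varpi^k x_0$.
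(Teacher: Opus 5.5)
Your route is the paper's: take the rank-two representatives from the proof of Lemma~\ref{algebraic lemma ramified}, compute the complement inside the first hyperbolic block while the other Jordan blocks stay untouched, read off $q(e_x)$ from that $2\times2$ block (the paper does this by taking determinants), and transport the result to the whole $U_v$-orbit. Your final formula is right, but the sign step is not. With your normalization $\bar\varpi_{E_v}=-\varpi_{E_v}$, the factor $-\bar\varpi_{E_v}/\varpi_{E_v}$ equals $1$. So your identity would give $q(e_x)=q(x_0)$, hence $(-1)^{r-s-1}$, and no sign is left over to be ``absorbed''. The correct identity is exact: for $x_0=X+cY$ one can take $e_x=X+\bar cY$, and
\[
 q(e_x)=\mathrm{Tr}_{E_v/F_v}(c\,\varpi_{E_v})
 =-\mathrm{Tr}_{E_v/F_v}(\bar c\,\varpi_{E_v})=-q(x_0).
\]
This is consistent with $q(x_0)q(e_x)$ lying in the class of $\det H_\varpi=-\Nm(\varpi_{E_v})$ modulo norms. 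This $-1$ is what converts $(-1)^{r-s-1}$ into $(-1)^{r-s}$. The sign is not harmless ``up to norms of units'', since $-1\in\Nm(\mathcal O_{E_v}^\times)$ only when $N_v\equiv1\pmod 4$.

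In the even case, you should also make the type convention explicit. You assign $s=0$ to the unit-depth line orbit, but for the hyperbolic orbits you use $k=r-s-1$, i.e.\ $s=v(q(x_0))-1$. Under that convention the hyperbolic orbit with $v(q(x_0))=1$, which exists whenever $r\ge1$, also has $s=0$. Its complement $\mathcal O_{E_v}e_x\perp\mathcal O_{E_v}e\perp H_\varpi^{\perp(n-2)/2}$ contains a unimodular line, so it is not $\varpi_{E_v}$-modular. The statement is ambiguous at exactly this point, so give the line orbit its own label. With that convention your computation covers every orbit: the hyperbolic description then holds for all $s\ge0$, and the line orbit has complement $H_\varpi^{\perp n/2}$.
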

\begin{proof}

Choose the rank-two representatives used in the proof of
Lemma~\ref{algebraic lemma ramified} and take their orthogonal complements in
the first Jordan block.  All remaining Jordan blocks are fixed.  The displayed
norm of $e_x$ follows by taking determinants of the rank-two block.  The
integral Witt extension theorem then transports these decompositions to every
vector in the corresponding orbit.

\end{proof}

\subsection{Twisted Hodge bundles}\label{Twisted Hodge bundle section}
In this subsection we give the precise definition of twisted Hodge bundles on the integral model $\mathcal{X}_U$. The arrangement is as follows. We first review the moduli interpretation of the Hodge bundle on the RSZ unitary Shimura varieties as given in \cite[Section 2.3]{Guo2}. Then we give the moduli interpretation of twisted Hodge bundles. Finally, we explain that this concept is closely related to the special divisors.

\subsubsection*{Review of the Hodge bundle}
Recall that in \cite[Section 2]{Guo2}, we have already given the integral model of the RSZ unitary Shimura varieties defined via the moduli interpretation, as well as the moduli interpretation of the Hodge bundle when $n$ is odd. These definitions are all compatible with those on unitary Shimura varieties. To facilitate the discussion that follows, we briefly recall some definitions below, and we refer to \cite[Section 2.3]{Guo2} for details.

Denote by $(\mathcal A_0,\mathcal A)$ the universal object over the RSZ integral model $\mathcal M_{K_{\wt G}}(\wt G)$.  We use throughout the Eisenstein-ideal language of \cite[Definition~7.1 and Remark~7.2]{LMZ}.  At a finite completion $\wt\nu$ above a place $v$ of $F$, let $\mathfrak A_v\subset\mathfrak B_v$ be the local embedding sets attached to the fixed signature, and let $J_{\mathfrak A_v}$ be the corresponding Eisenstein ideal.  In the notation of \cite{LMZ},
$$
   \Fil(\mathcal A_0)=J_\Phi\,\mathbb D(\mathcal A_0),
$$
and we put
$$
   \omega_{\mathcal A,\wt\nu}
   :=\bigl(J_{\mathfrak A_v}\Lie(\mathcal A)\bigr)_1.
$$
Here the subscript $1$ is the rank-one summand singled out by the signature condition.  We define
\begin{equation}\label{line bundle of modular forms}
   \wt{\mathcal L}_{\wt\nu}^{-1}
   =\Omega_{\mathcal A,\wt\nu}
   :=\Hom_{\mathcal O_E\otimes\mathcal O_{\widehat{\mathcal M}_{\wt\nu}}}
      \bigl(\Fil(\mathcal A_0),\omega_{\mathcal A,\wt\nu}\bigr).
\end{equation}
By \cite[Remark~7.2]{LMZ}, this equivariant Hom is an invertible sheaf on the base. The CM type and the Eisenstein ideal select the rank-one factor in the relative Hodge filtration.  The local lines have the same automorphic generic fibre and give the arithmetic Hodge bundle, denoted by $\wt{\mathcal L}$.

When a Kr\"amer hyperplane $\mathcal G$ is available, one has $\omega_{\mathcal A}\simeq\Lie(\mathcal A)/\mathcal G$. The polarization on $\mathcal A_0$ identifies the relevant equivariant dual of $\Fil(\mathcal A_0)$ with the usual $\Lie(\mathcal A_0)$ factor. Thus in this case equation \eqref{line bundle of modular forms} can also be written as
$$
   \wt{\mathcal L}^{-1}
   \simeq \Lie(\mathcal A_0)\otimes\Lie(\mathcal A)/\mathcal G.
$$
At an almost $\varpi_{E_v}$-modular place we use equation \eqref{line bundle of modular forms} directly on the RSZ model. Under the absolute--relative comparison of \cite[Sections~3--6]{LMZ}, the same bundle is obtained from the relative Hodge filtration and Faltings duality.

Moreover, there is a natural Hermitian metric on $\wt{\mathcal L}$.  For a
complex point $z$, under the identification of
\cite[Proposition~2.4.2]{BHK+}
$$
 \wt{\mathcal L}_z\subset
 \Hom_E\!\bigl(H_1(\mathcal A_{0,z},\QQ),H_1(\mathcal A_z,\QQ)\bigr)
 \otimes_F\CC
 \simeq\wt V\otimes_F\CC,
$$
the line $\wt{\mathcal L}_z$ is isotropic for the complex-bilinear extension
of
$$
 [x,y]=\tr_{E/F}\langle x,y\rangle.
$$
The positive Hermitian norm on the negative isotropic line is therefore
\begin{equation}\label{RSZ Hodge metric}
 \|s_z\|_{\wt{\mathcal L}}^2
   =-[s_z,\overline{s_z}],
 \qquad s_z\in\wt{\mathcal L}_z,
\end{equation}
where the bar is the complex conjugation defining the real form.  The
conjugation in \eqref{RSZ Hodge metric} is essential: without it the
complex-bilinear pairing of an isotropic vector with itself is zero.  We use
the same symbol $\wt{\mathcal L}$ for the resulting metrized line bundle.

\subsubsection*{Twisted Hodge bundles on unitary Shimura curves}

Before we define twisted Hodge bundles in general dimension, we first explain the curve case.  The relation with the quaternionic Shimura curve is still the motivation, but we will use it branch by branch.  This distinction will be important in the local computations below.

We follow the notation of \cite[Section~3]{Guo2}.  On a quaternionic Shimura curve, the arithmetic Hecke correspondence is a sum of finite branches.  In the proof of \cite[Proposition~4.3.2]{Zh1}, Zhang studies one such branch by means of its universal isogeny.  If the formal part of its kernel has depth $i$, the induced map on invariant differentials has cokernel of length $i$.  The total Hecke coefficient is obtained only afterwards, by counting how many branches have this depth.  Thus the local Hodge multiplicity and the number of Hecke branches are two different factors.

Through the comparison diagram of \cite[Figure~2]{Guo2}, the same universal isogeny gives the corresponding line bundle on the unitary Shimura curve. We denote it by $\LL_x$. It agrees with $\LL$ on the generic fibre and we call it the twisted Hodge bundle. The normalized image of the whole Hecke correspondence is obtained by summing these branches with their multiplicities. The normalization of the Hodge bundle is the one in \cite[Remark~3.8]{Guo2}.

\subsubsection*{Twisted Hodge bundles: odd-dimensional case}
We now introduce the definition of the twisted Hodge bundle on the integral model of unitary Shimura variety. The definition here is inspired by \cite[Section 1.4]{Zh1}. We first consider the case when $n$ is odd.

For convenience, we first introduce the following definition for vectors $x$ in $\Lambda_f^\times$.

\begin{definition}\label{split vector definition}
A vector $x\in\Lambda_f^\times$ is called split if $v(q(x))$ is even at every inert place $v$.
\end{definition}

In the following discussion, for any split vector $x$, we define a sub-lattice $\Lambda_{x,v}\subset\Lambda_v$ locally at each finite places $v$. We discuss case by case according to $v$.

\begin{enumerate}
    \item When $v$ is split in $E/F$, we first define the \textit{primitive} vector $x'$ of $x$ in $\Lambda_v$, such that $x$ is an $\mathcal{O}_{E_v}$-multiple of $x'$, and the valuation of $q(x')$ at $v$ is minimal for all such vectors. It is not hard to check that such primitive vector always exists and is unique up to an $\mathcal{O}^\times_{E_v}$-multiple. Using Corollary \ref{lattice corollary split}, there exists an orthogonal basis of $V_v$
    \begin{equation*}
       \{x',x^{\perp},ae_1,\cdots,ae_{n-1}\}, \Nm(a)=q(x'),\ q(e_i)=1,
    \end{equation*}
    such that $x^{\perp}$ has the same Hermitian norm and type as $x'$ at $v$, and $e_i\in\Lambda_v$. Then we define $\Lambda_{x,v}$ to be the sub-lattice of $\Lambda_v$ generated by this basis.
    \item When $v$ is inert in $E/F$, we can repeat the construction to define a sub-lattice $\Lambda_{x,v}\subset\Lambda_v$. In other words, $\Lambda_{x,v}$ is generated by an orthogonal basis
    \begin{equation*}
       \{x',x^{\perp},ae_1,\cdots,ae_{n-1}\}, \Nm(a)=q(x'),\ q(e_i)=1,
    \end{equation*}
    such that $x'$ is the primitive vector associated to $x$, $x^{\perp}$ has the same Hermitian norm and type as $x'$ at $v$, and $e_i\in\Lambda_v$. Note that the existence of $a$ is guaranteed by the fact that $x$ is a split vector.
    \item When $v$ is ramified in $E/F$, note that $\Lambda_v$ is the $\varpi_{E_v}$-modular lattice. Suppose $x'$ is the primitive vector of $x$, then we simply define
    \begin{equation*}
        \Lambda_{x,v}=a\Lambda_v, a\in\mathcal{O}_{E_v}
    \end{equation*}
    such that $v(\Nm(a))=v(q(x'))$.
\end{enumerate}

In summary, we give the following definition.
\begin{definition}\label{sub-lattice definition}
    For any split vector $x$, there exists a sub-lattice $\Lambda_x\subset\Lambda$ which satisfies these local conditions above. We call it the twisted lattice.
\end{definition}

The construction below is independent of the auxiliary choices in Definition~\ref{sub-lattice definition}. Indeed, the local normal forms in Lemmas~\ref{algebraic lemma}, \ref{algebraic lemma inert case}, and~\ref{algebraic lemma ramified} show that two choices give isomorphic polarized lattice chains fixing the line $E_vx_v$. At a ramified $\varpi_{E_v}$-modular place, multiplying the element $a$ by a unit does not change $a\Lambda_v$. The universal isogeny and the line bundle constructed from it are therefore identified under these isomorphisms. Their arithmetic classes depend only on $x$.

We also have the following proposition concerning $\Lambda_x$.

\begin{proposition}\label{stabilizer comparison proposition}
There exists $g_x\in G(\wh{\QQ})$ such that the stabilizer of $\Lambda_x$ is $g_x^{-1}Ug_x$. Moreover, the stabilizer of $\Lambda\cap V_x^\perp$ contains the level induced by $U\cap g_x^{-1}Ug_x$ on $V_x^\perp$.
\end{proposition}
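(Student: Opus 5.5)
We work place by place and assemble $g_x=(g_{x,v})_v$. Two facts drive the argument. First, $\Lambda_{x,v}=\Lambda_v$ for almost all $v$, namely wherever $x$ is primitive and $v(q(x))=0$. Second, $U_v$ is exactly the stabilizer of $\Lambda_v$. So the first assertion reduces to the local claim that $\Lambda_{x,v}=g_{x,v}\Lambda_v$ for some $g_{x,v}\in\U(V_v)$, with $g_{x,v}=1$ at almost all $v$. Granting this, the stabilizer of $\Lambda_{x,v}$ is $g_{x,v}U_vg_{x,v}^{-1}$. Replacing $g_x$ by its inverse gives the normalization $g_x^{-1}Ug_x$ in the statement.

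At a ramified place the local claim is immediate. Here $\Lambda_{x,v}=a\Lambda_v$, and scalar multiplication by $a$ is a similitude, not an isometry. It nevertheless has the same stabilizer, because $a$ is central. So we may take $g_{x,v}=1$ there, or more precisely we only need the stabilizer equality.

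At split and inert places we compare Hermitian structures. $\Lambda_{x,v}$ has the orthogonal basis $\{x',x^\perp,ae_1,\dots,ae_{n-1}\}$, with $q(x')=q(x^\perp)=\Nm(a)$ and $q(ae_i)=\Nm(a)$. So the lattice $\Lambda_{x,v}$ equipped with the rescaled form $\Nm(a)^{-1}q$ is self-dual with an orthonormal basis, after rescaling $x'$ and $x^\perp$ by $a^{-1}$. This is exactly the normal form of $\Lambda_v$. Hence there is $h\in\GL(V_v)$ carrying $\Lambda_v$ onto $\Lambda_{x,v}$, mapping the basis $\{e_i\}$ to $\{x',x^\perp,ae_1,\dots\}$ up to the scalar $a$.

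The map $a^{-1}h$ is then an isometry of $V_v$. Its image lattice is $a^{-1}\Lambda_{x,v}$, which has the same stabilizer as $\Lambda_{x,v}$ because scalars are central. We take $g_{x,v}=a^{-1}h$. At the split place we use the explicit representative $\big((p_v^s,a/p_v^s),(0,p_v^t),0,\dots\big)$ of Lemma~\ref{algebraic lemma} to write $x',x^\perp$ inside $\mathrm{span}(e_1,e_2)$. At the inert place the existence of $a$ uses the split-vector hypothesis, as in the definition.

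The main obstacle is checking that these local isometries actually lie in $\U(V_v)$ rather than only in a similitude group, and that they glue into an element of $G(\wh\QQ)$. The central-scalar trick handles the first point. The second needs only almost-everywhere triviality, which holds.

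For the second assertion, restrict to $V_x^\perp$. An element of $U\cap g_x^{-1}Ug_x$ that preserves $V_x^\perp$, and hence the line $Ex$, stabilizes both $\Lambda$ and $\Lambda_x$, with the normalization above. In particular it stabilizes $\Lambda\cap V_x^\perp$. The induced level on $V_x^\perp$ is the image of the subgroup fixing $x$, so the containment follows directly.
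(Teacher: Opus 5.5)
Your proof is correct and is essentially the paper's argument. At ramified places $a$ is a central scalar, so $a\Lambda_v$ has stabilizer $U_v$. At split and inert places you take the isometry sending an orthonormal basis of $\Lambda_v$ to the orthonormal basis $a^{-1}x', a^{-1}x^\perp, e_1,\dots$ of $a^{-1}\Lambda_{x,v}$; the paper writes this map explicitly as $e_1\mapsto x/a$, $e_2\mapsto y/a$, $e_i\mapsto e_i$. The second assertion follows because such elements stabilize $\Lambda$ and preserve $V_x^\perp$. Your extra remark that $g_{x,v}\in U_v$ for almost all $v$, so that the local elements glue to an element of $G(\wh{\QQ})$, is a point the paper leaves implicit.
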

\begin{proof}
    We still discuss case by case at each place. When $v$ is ramified in $E/F$, the stabilizer of $\Lambda_x$ is exactly $U$, hence there is nothing to prove. The proof of inert places is very similar to split places, so we only consider the case when $v$ is split in $E/F$.

    Under notations in Lemma \ref{algebraic lemma}, suppose $x=x'=ke_1+le_2$ is a primitive vector in $\Lambda_v$. Then $q(x)=\Nm(k)+\Nm(l)$, and $\Lambda_x$ is generated by
    \begin{equation*}
        \{x,y,ae_3,\cdots,ae_{n+1}\},\ y=\bar{l}e_1-\bar{k}e_2,\ \Nm(a)=q(x).
    \end{equation*}
    Then we can simply choose $g_{x,v}$ to be the one that maps $e_1$ to $\frac{1}{a}x$ and $e_2$ to $\frac{1}{a}y$, and preserves $e_i$ for $i\ge 3$. The second part is obvious by definition, since $V_x^\perp$ is generated by $y$ and $e_i$ with $i\ge 3$.
\end{proof}

The level inherited by the special divisor is
$$
 U_x:=U\cap U(V_x^\perp)(\mathbb A_{F,f}),
$$
where $U(V_x^\perp)$ is embedded by fixing the line $Ex$ pointwise.  This group is contained in the stabilizer of $\Lambda\cap V_x^\perp$, but the two groups need not be equal.  The reason is elementary: $\Lambda_v$ need not be the direct sum of $\mathcal O_{E_v}x$ and $\Lambda_v\cap V_x^\perp$.  At positive depth the two summands are glued through a finite quotient, and an element of the orthogonal-complement stabilizer belongs to $U_{x,v}$ only when it also preserves this gluing.  The finite index which appears here is exactly the branch multiplicity which will be written explicitly in Section~\ref{Explicit local terms}.  We shall use the common refinement
$$
 U_x^{\mathrm{ref}}:=U_x\cap g_x^{-1}Ug_x.
$$

Similarly, we also define
\begin{equation}\label{twisted lattice}
    \wt{\Lambda}_x=\Hom_{\mathcal{O}_E}(\Lambda_0,\Lambda_x)\subset\wt{V}.
\end{equation}
Here $\Lambda_0$ and $\wt{V}$ are defined in \cite[Section 2.1]{Guo2}, which are used for moduli interpretation of RSZ unitary Shimura varieties.

\begin{proposition}\label{common refinement isogeny proposition}

Let $x$ be split and put $U_x^{\mathrm{ref}}=U_x\cap g_x^{-1}Ug_x$. After passing to this common refinement, the universal lattice-chain construction gives an $\mathcal O_E$-linear isogeny
$$
 \phi_x^{\mathrm{ref}}:\mathcal A^{\mathrm{ref}}
 \longrightarrow\mathcal A_x^{\mathrm{ref}}.
$$
It is compatible with the polarizations and with base change. The Hodge bundle associated with $\mathcal A_x^{\mathrm{ref}}$ descends to $\wt{\mathcal Z}(x)$. We may compute the normalized intersections on this common refinement.

\end{proposition}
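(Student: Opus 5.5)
Let $K:=U\cap g_x^{-1}Ug_x$ in $G(\wh{\QQ})$. The natural route is to realize both $\mathcal A$ and $\mathcal A_x$ as pullbacks of universal objects to the RSZ model of level $K$, and to build $\phi_x^{\mathrm{ref}}$ from the lattice inclusion $\Lambda_x\subset\Lambda$ by the standard Rapoport--Zink lattice-chain formalism.

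Let $\mathcal X_K$ be the RSZ model of level $K$. It carries two finite étale projections to $\mathcal X_U$: the identity-induced map $\pi_1$, and the Hecke translate $\pi_2$ by $g_x$. By Proposition~\ref{stabilizer comparison proposition}, the stabilizer of $\Lambda_x$ is $g_x^{-1}Ug_x$, so $\pi_2$ is the moduli problem for the lattice $\Lambda_x$. We set $\mathcal A^{\mathrm{ref}}=\pi_1^*\mathcal A$ and $\mathcal A_x^{\mathrm{ref}}=\pi_2^*\mathcal A$, both equipped with the same $\mathcal A_0$.

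Away from the primes dividing $[\Lambda:\Lambda_x]$, the two objects are identified by a prime-to-$p$ quasi-isogeny given by the level structure $g_x$. At a split or inert $v$, we choose the basis from the proof of Proposition~\ref{stabilizer comparison proposition} and write $\Lambda_{x,v}\subset\Lambda_v$ as the lattice with elementary divisors $(1,1,a,\dots,a)$ up to the rank-two block. At a ramified $\varpi_{E_v}$-modular $v$ we have $\Lambda_{x,v}=a\Lambda_v$, so the isogeny there is multiplication by $a$.

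In each case the inclusion $\Lambda_x\hookrightarrow\Lambda$, or its dual, becomes an $\mathcal O_E$-linear isogeny of the $p$-divisible groups. Its kernel is a finite flat $\mathcal O_E$-stable subgroup determined by the quotient $\Lambda/\Lambda_x$; at split places we use Serre--Tate together with the $\GL_{n+1}$ description. These local pieces glue into a global quasi-isogeny $\phi_x^{\mathrm{ref}}$, which is a genuine isogeny because $\Lambda_x\subset\Lambda$.

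For compatibility with polarizations, observe that $\Lambda_x$ is built from an orthogonal basis scaled by $a$ with $\Nm(a)=q(x')$. Hence the pullback of the polarization of $\mathcal A_x^{\mathrm{ref}}$ is $\Nm(a)$ times that of $\mathcal A^{\mathrm{ref}}$, up to a unit, and this holds locally at every place. Base change compatibility follows because everything is defined by pullback of universal objects.

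For the descent statement, restrict to $\wt{\mathcal Z}(x)$. Its level is $U_x$, and we intersect with $U_x^{\mathrm{ref}}$. Since $g_x$ fixes the line $Ex$ up to the scalar $1/a$, the second part of Proposition~\ref{stabilizer comparison proposition} shows that the Hodge line $\omega_{\mathcal A_x}$ from \eqref{line bundle of modular forms} is invariant under $U_x$ modulo $U_x^{\mathrm{ref}}$. Finite étale descent along $\mathcal X_{U_x^{\mathrm{ref}}}\to\wt{\mathcal Z}(x)$ then applies. The independence of choices stated just after Definition~\ref{sub-lattice definition} shows that the descent datum is well defined.

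Finally, normalized intersections may be computed on the refinement by the projection formula: pushing forward along a finite étale cover multiplies by its degree, and the normalization of \cite[Section~2]{Guo2} divides this degree out.

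The main obstacle is the ramified and almost-modular places. There the RSZ model is not the naive moduli problem, and the Hodge bundle is defined through the Eisenstein ideal $J_{\mathfrak A_v}$ rather than through $\Lie$. The step to check is that multiplication by $a$ preserves the Eisenstein/Krämer condition and induces a map on the rank-one summands $(J_{\mathfrak A_v}\Lie)_1$. I would try to verify this directly from \cite[Remark~7.2]{LMZ}, using that $a\in\mathcal O_{E_v}$ is central and commutes with the Eisenstein ideal.
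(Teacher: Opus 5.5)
Your overall plan (pass to the common refinement, descend along it, use the projection formula) matches the paper's. There is, however, a genuine gap in the step that carries the content: producing $\phi_x^{\mathrm{ref}}$ over the integral model at a prime $p$ below a split or inert place $v$ with $\Lambda_{x,v}\neq\Lambda_v$. At such $v$, $K_v=U_v\cap g_x^{-1}U_vg_x$ is the stabilizer of a pair of lattices, not of a single lattice. So ``the RSZ model of level $K$'' is not an object you can take off the shelf, and you do not say how the Hecke translate $\pi_2$ is defined integrally. As written, $\pi_1^*\mathcal A$ and $\pi_2^*\mathcal A$ are two abelian schemes with no given map between them on the fibre at $p$.

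The inclusion $\Lambda_x\subset\Lambda$ is a statement about rational Tate modules on the generic fibre. In characteristic $p$ there is no $p$-adic Tate module for it to act on. Serre--Tate together with the $\GL_{n+1}$ description only controls deformations of the $p$-divisible group; it does not tell you which finite flat subgroup to divide by. Since $\Lambda_{x,v}$ is not a homothety of $\Lambda_v$ at these places, that subgroup is genuinely extra data. Over a single point of $\mathcal X_U$ there are many subgroups of the same type (the branches of the Hecke correspondence; for $n=1$ at a split place it is a $\Gamma_0$-type datum). So the sentence ``these local pieces glue into a global quasi-isogeny'' has nothing behind it at such $p$. Only at the ramified $\varpi_{E_v}$-modular places, where $\Lambda_{x,v}=a\Lambda_v$ and the kernel is the canonical $a$-torsion, does your multiplication-by-$a$ argument work as stated.

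The missing idea is to build the isogeny into the moduli problem, as the paper does. For each $p$, let $\mathfrak L_{x,p}$ be the self-dual periodic multichain generated by the $p$-components of $\wt\Lambda$, $\wt\Lambda_x$, their duals and homothetic translates. Take the moduli of polarized $\mathfrak L_{x,p}$-sets \cite[Definition~6.9]{RZ}, globalized by the common-refinement construction of \cite[Section~4.4]{RSZ3}. Because $U_x^{\mathrm{ref}}$ refines both levels, the special divisor at level $U_x^{\mathrm{ref}}$ maps to this moduli space. Then $\phi_x^{\mathrm{ref}}$ is simply the pulled-back transition arrow between the $\wt\Lambda$- and $\wt\Lambda_x$-members, and:
its kernel is finite flat by definition;
polarization compatibility is the self-duality of the chain, so your separate $\Nm(a)$ computation becomes unnecessary;
base change is automatic;
away from $p$ the arrow is the one encoded by the prime-to-$p$ level structure, which is what legitimizes gluing over all primes.
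Your $\pi_1,\pi_2$ then become the forgetful maps to the two members.

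The subtlety to watch is also not where you placed it. Almost-modular places do not enter this definition; the even case goes through the auxiliary datum $(V^+,\Lambda^+,U^+;y_0^+)$. What needs care are the places where $F_v/\QQ_p$ is ramified, where the chain is built in the relative category and transported by the comparison of \cite[Sections~3--6]{LMZ}.

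Finally, your stated reason for descent is not correct. The $g_x$ of Proposition~\ref{stabilizer comparison proposition} sends $e_1$ to $x/a$ and does not fix the line $Ex$. The paper instead uses that the finite group acting on the cover preserves the polarized multichain, and hence the line attached to $\mathcal A_x^{\mathrm{ref}}$.
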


\begin{proof}

For each rational prime $p$, let $\mathfrak L_{x,p}$ be the self-dual periodic
multichain generated by the $p$-components of $\wt\Lambda$,
$\wt\Lambda_x$, their duals and homothetic translates.  An $\mathfrak L_{x,p}$-set of abelian schemes contains transition isogenies for every lattice inclusion in the chain. This is part of the moduli datum in
\cite[Definition~6.9]{RZ}.  The global PEL common-refinement construction of
\cite[Section~4.4, especially (4.29)--(4.30)]{RSZ3} carries the universal
polarized $\mathfrak L_{x,p}$-set.

By the stabilizer comparison above,
$U_x^{\mathrm{ref}}$ simultaneously refines the level defined by
$\wt\Lambda$ and the one defined by $\wt\Lambda_x$.  Hence the RSZ special
divisor at level $U_x^{\mathrm{ref}}$ maps to the common-refinement
multichain moduli problem.  Pulling back the universal
$\mathfrak L_{x,p}$-set and taking the transition arrow between the
$\wt\Lambda$- and $\wt\Lambda_x$-components gives
$\phi_x^{\mathrm{ref}}$.  At $p$ its finite flat kernel is part of the multichain moduli problem. Away from $p$ the same arrow is encoded by the prime-to-$p$ level structure.  The local arrows therefore glue over all
primes.  At a place for which $F_v/\QQ_p$ is ramified, the construction is
made in the relative category and transported to the absolute completion by
the comparison equivalences of \cite[Sections~3--6]{LMZ}, which are
compatible with duality and the additional structures.

The finite group acting on the common-refinement cover preserves the polarized multichain and the line bundle attached to $\mathcal A_x^{\mathrm{ref}}$. Hence this line bundle descends along the finite \'etale map $p_x$ to $\wt{\mathcal Z}(x)$. By the projection formula, the normalized arithmetic class and the normalized intersections are independent of the chosen common refinement. We will use this common refinement whenever the universal isogeny is needed.

\end{proof}

This construction comes from the moduli problem. On the generic fibre, the element $g_x$ identifies the transition isogeny with the usual Hecke isogeny. The integral definition above uses the universal isogeny on the common-refinement model.

Thus, similar to our previous definition of the line bundle of modular forms in \ref{line bundle of modular forms}, we can define an arithmetic line bundle on $\wt{\mathcal Z}(x)$.

\begin{definition}\label{twisted Hodge bundle}

Assume that $n$ is odd and that $x$ is split.  Work first on the finite
\'etale common-refinement cover constructed above,
where the universal transition isogeny
$\phi_x^{\mathrm{ref}}:\mathcal A^{\mathrm{ref}}\to\mathcal A_x^{\mathrm{ref}}$
is defined.  On a completion above $v$ put
$$
  \omega_{\mathcal A_x,v}:=\bigl(J_{\mathfrak A_v}\Lie(\mathcal A_x)\bigr)_1
$$
and define
\begin{equation}\label{twisted Hodge bundle exact definition}
  \wt{\mathcal L}_{x,v}^{-1}
  :=\Omega_{\mathcal A_x,v}
  :=\Hom_{\mathcal O_E\otimes\mathcal O_{\wt{\mathcal Z}(x)}}
      \bigl(\Fil(\mathcal A_0),\omega_{\mathcal A_x,v}\bigr).
\end{equation}
By \cite[Definition~7.1 and Remark~7.2]{LMZ}, these are invertible sheaves. Their formation is compatible with the universal transition isogeny, base change, and restriction to an orthogonal self-dual summand. Hence they glue on the common-refinement model and descend to an arithmetic line bundle $\wt{\mathcal L}_x$ on $\wt{\mathcal Z}(x)$. When a Kr\"amer hyperplane $\mathcal G_x$ is available, this agrees with the expression $\Lie(\mathcal A_0)\otimes\Lie(\mathcal A_x)/\mathcal G_x$ on the corresponding equivariant factor.

Using the normalization of \cite[Section~2]{Guo2}, we obtain an arithmetic line bundle $\mathcal L_x$ on the normalized RSZ special divisor. We call it the \emph{twisted Hodge bundle} associated with $x$.

\end{definition}

The construction is independent of the choices above. Changing the primitive vector by a unit, the auxiliary orthogonal basis, or the element $a$ with the prescribed norm gives an isomorphic universal isogeny. Therefore it gives the same arithmetic line bundle $\wt{\mathcal L}_x$ and the same normalized line bundle $\LL_x$.

Note that the above definition also holds when $n$ is even away from places ramified in $E/F$. Moreover, this definition is also consistent with the previous definition using Hecke correspondence when $n=1$.

There is also a natural Hermitian metric on $\wt{\mathcal L}_x$.  On the
common-refinement cover, the generic-fibre transition isogeny
$\phi_x^{\mathrm{ref}}$ identifies the two automorphic Hodge bundles and hence
transports the metric of $\wt{\mathcal L}$ to the line attached to
$\mathcal A_x^{\mathrm{ref}}$.  The metric is preserved by the finite group acting on the cover, so it descends together with the line bundle to $\wt{\mathcal Z}(x)$.  We again write
$\wt{\mathcal L}_x$ for the resulting metrized line bundle.

\subsubsection*{Twisted Hodge bundles: even-dimensional case}

We now define the twisted Hodge bundle when $n$ is even, including at the ramified places where the rank-$(n+1)$ lattice is almost $\varpi_{E_v}$-modular. At unramified places the direct construction of Definition~\ref{twisted Hodge bundle} remains available. At an almost $\varpi_{E_v}$-modular place we use restriction from one dimension higher. Following Rapoport--Smithling--Zhang, with $m=n+2$, the construction passes through the auxiliary spaces $\mathcal P_m$ and $\mathcal P'_m$. Here $\mathcal P_m$ is the almost $\varpi_{E_v}$-modular moduli space, and $\mathcal P'_m$ carries the isogeny to the $\varpi_{E_v}$-modular space $\mathcal N_m$.

To begin with, we choose a special vector $y_0\in \Lambda^\vee$ throughout this paper. The reason why we take the dual lattice $\Lambda^\vee$ is to simplify the discussion at ramified places, and is also explained in \cite[Corollary 4.7]{Guo1}.
\begin{definition}\label{y_0 vector}
    Let $y_0\in \Lambda^\vee$ be a vector, such that
    \begin{enumerate}
        \item $q(y_0)$ has valuation 0 at each finite place.
        \item If $n$ is even, we further require that at each ramified place $v$,

        $q(y_0)\in \Nm(E_v^\times)\cdot(-1)^\frac{n}{2}d_\BV$.

    \end{enumerate}
\end{definition}

It is clear that $Z(y_0)_U$ is a simple special divisor on $X_U$, which is also a unitary Shimura variety of dimension $n-1$. We write ${\mathcal Z}(y_0)_U$ for its normalized RSZ arithmetic realization, equivalently for the $\LL$-admissible arithmetic class in the normalization of \cite[Section~4]{Guo2}.  By the choice of $y_0$, the orthogonal complement $V_{y_0}^\perp$ satisfies the standard local lattice hypotheses of \cite[Assumption~2.4]{Guo2}.  Thus $Z(y_0)$ is the standard inductive special divisor: on every relevant completion its RSZ/relative local model is the one used in the $(n-1)$-dimensional theory.

A global vector satisfying the unit condition at every finite place may fail to exist. Accordingly, the condition in Definition~\ref{y_0 vector} is used temporarily to write the local and inductive formulas without auxiliary-place notation.  Section~\ref{final formula section} replaces this standard reference by an actual global vector obtained by weak approximation.  The resulting discrepancy is supported on a finite auxiliary set and is then shown to vanish using Theorem~\ref{Q independence of log}.

For the construction in even dimension we fix, once and for all, an auxiliary datum
$$
   (V^+,\Lambda^+,U^+;y_0^+)
$$
of Shimura dimension $n+1$, together with
$$
   (V^+)_{y_0^+}^{\perp}\simeq V.
$$
Away from the ramified almost $\varpi_{E_v}$-modular places we choose the lattices and levels so that this identification also identifies the lower lattice and level with $\Lambda$ and $U$.  At a ramified almost $\varpi_{E_v}$-modular place we use the polarized lattice chain of \cite[(12.4) and Remark~12.3]{RSZ1}.  For the fixed branch $\epsilon_v$, write $\Lambda_v^{+,\epsilon_v}$ for the corresponding upper $\varpi_{E_v}$-modular lattice.  Then
$$
 \Lambda_v^{+,\epsilon_v}\cap V_v=\varpi_{E_v}\Lambda_v^\vee,
 \qquad
 (\Lambda_v^{+,\epsilon_v})^\vee\cap V_v=\Lambda_v^\vee.
$$
Thus at such a place the relevant compatibility is the compatibility of these two lattice chains. The common-refinement level below keeps track of them.  We require $y_0^+$ to have unit norm at every finite place. The superscript ``$+$'' distinguishes this auxiliary vector from the vector $y_0$ in Definition~\ref{y_0 vector}, which is used in the induction inside $\mathcal X$.

Globally, the clean formulation uses the common-refinement moduli problem of
\cite[Section~4.4]{RSZ3}.  After replacing the level by the simultaneous
stabilizer of the lower and ambient lattices, there is a finite \emph{\'etale}
cover
$$
   p:\mathcal X^{\mathrm{ref}}\longrightarrow\mathcal X
$$
and a moduli morphism
$$
   j^{\mathrm{ref}}_{y_0^+}:\mathcal X^{\mathrm{ref}}
      \longrightarrow\mathcal X^+.
$$
Its graph in the corresponding product RSZ model is a closed immersion by \cite[Lemmas~4.15 and~4.17]{RSZ3}.  All line bundles used below are first pulled back to $\mathcal X^{\mathrm{ref}}$ and then descended along the finite \'etale map $p$. The construction is functorial for the polarized multichain. The projection formula shows that the normalized arithmetic class is independent of the auxiliary common-refinement level.
For simplicity, we denote this pullback followed by descent by $j_{y_0^+}^*$ throughout the paper.

At a ramified place $v$ where $\Lambda_v$ is almost
$\varpi_{E_v}$-modular, the corresponding formal morphism is supplied by the
auxiliary-space construction of \cite[Sections~9 and~12]{RSZ1}. More precisely,
put $m=n+2$, so that the lower relative Rapoport--Zink space is
$\mathcal N_{m-1,v}$ of odd, almost $\varpi_{E_v}$-modular type and the higher one is
$\mathcal N_{m,v}$ of even, $\varpi_{E_v}$-modular type. The product with the conjugate one-dimensional universal object first gives
$$
  \widetilde\delta_{\mathcal N,v}:\mathcal N_{m-1,v}\longrightarrow\mathcal P_{m,v}.
$$
Rapoport--Smithling--Zhang introduce an isogeny moduli space $\mathcal P'_{m,v}$ over $\mathcal P_{m,v}$. Its objects include an $\mathcal O_{E_v}$-linear isogeny of degree $q_v=N_v$.  The morphism to $\mathcal N_{m,v}$ is obtained from the isogeny space $\mathcal P'_{m,v}$ over $\mathcal P_{m,v}$. By \cite[Theorem~9.3]{RSZ1},
$$
  \mathcal P'_{m,v}=(\mathcal P'_{m,v})^+\amalg(\mathcal P'_{m,v})^-
$$
and each component maps isomorphically to $\mathcal P_{m,v}$. Choosing the component determined by the fixed lattice $\Lambda_v^+$ gives
$$
 \delta_{\mathcal N,v}^{\epsilon_v}:
 \mathcal N_{m-1,v}
 \xrightarrow{\,\widetilde\delta_{\mathcal N,v}\,}
 \mathcal P_{m,v}
 \xrightarrow{\,\psi^{\epsilon_v}_v\,}
 (\mathcal P'_{m,v})^{\epsilon_v}
 \longrightarrow \mathcal N_{m,v},
 \qquad \epsilon_v\in\{+,-\},
$$
which is a closed immersion by \cite[Proposition~12.1]{RSZ1}. The relation between the almost $\varpi_{E_v}$-modular lattice and the two possible $\varpi_{E_v}$-modular lattices is described explicitly in \cite[(12.4) and Remark~12.3]{RSZ1}. The fixed global datum selects one of these two branches at each ramified place, and we keep this choice throughout the paper.

The global and semi-global counterpart of this construction is carried out in
\cite[Sections~4.4 and~5]{RSZ3}.  There the two lattice levels are placed in a
single polarized multichain (cf. (4.29)). The common-refinement projection to
the lower model is finite \'etale, while the graph morphism to the product
model is a closed immersion by Lemma~4.15.  Variant~2 and Lemma~4.17 treat
several ramified places simultaneously, and (5.9) records the corresponding
global graph construction.
The auxiliary datum $(V^+,\Lambda^+,U^+;y_0^+)$ and the local branches $(\epsilon_v)_v$ are fixed throughout. Accordingly, all even-dimensional twisted Hodge bundles below are defined relative to this fixed choice.

At a ramified place, we also use the cycle $\mathcal Y(x)$ of \cite[Definition~2.9]{Yao}. Recall that $\mathcal Z(x)$ is the locus where $\rho^{-1}x$ extends to a homomorphism, while $\mathcal Y(x)$ is the locus where $\lambda\circ\rho^{-1}x$ extends to a homomorphism.

\begin{lemma}\label{pullback compatibility lemma}
Let $x\in V=(V^+)_{y_0^+}^{\perp}$. At every unramified place, the common-refinement construction gives the usual Cartesian pullback identity for $\mathcal Z(x)$. At a ramified almost $\varpi_{E_v}$-modular place, put $m=n+2$. For the fixed branch $\epsilon_v$, one has
$$
 \mathcal Y_{m-1,v}(x)
 \simeq
 \mathcal N_{m-1,v}\times_{\mathcal N_{m,v}}\mathcal Y_{m,v}(x),
$$
and
$$
 \mathcal Y_{m,v}(x)\simeq\mathcal Z_{m,v}(\varpi_{E_v}x).
$$
\end{lemma}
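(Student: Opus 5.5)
We prove the two parts of Lemma~\ref{pullback compatibility lemma} separately. Throughout, the claims are read as identities of formal schemes over the completed local rings, and we work on the common-refinement model $\mathcal X^{\mathrm{ref}}$.

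\emph{Unramified places.} Here the polarized multichain $\mathfrak L_{x,p}$ has the form $\wt\Lambda^+=\wt\Lambda\perp\mathcal O_E y_0^+$ up to the refinement level. The moduli morphism $j^{\mathrm{ref}}_{y_0^+}$ sends $(\mathcal A_0,\mathcal A)$ to $(\mathcal A_0,\mathcal A\times\mathcal A_0^{(\cdot)})$, the product with the conjugate one-dimensional object carrying $y_0^+$.

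A homomorphism $\mathcal A_0\to\mathcal A\times\mathcal A_0^{(\cdot)}$ is a pair of components. The special homomorphism $x$ has vanishing second component because $x\perp y_0^+$, and the second component is automatically integral. So $\mathcal Z^+(x)$ pulled back is exactly the locus where the first component lifts, namely $\mathcal Z(x)$. This is a Cartesian square of moduli functors, checked on $S$-points, which gives the unramified claim.

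\emph{Ramified almost $\varpi_{E_v}$-modular places.} We pass to the relative Rapoport--Zink spaces via \cite[Sections~3--6]{LMZ} and factor $\delta^{\epsilon_v}_{\mathcal N,v}$ as $\wt\delta_{\mathcal N,v}$, then $\psi^{\epsilon_v}_v$, then $(\mathcal P'_{m,v})^{\epsilon_v}\to\mathcal N_{m,v}$. The first claim will be checked step by step along this factorization.
\begin{enumerate}
\item Through $\wt\delta_{\mathcal N,v}$, the product with the conjugate object, the argument is the same as in the unramified case. One needs only that the polarization $\lambda$ of the product is the product polarization, so the condition that $\lambda\circ\rho^{-1}x$ extends splits componentwise.
\item Through the isogeny $\alpha:X\to X'$ of degree $q_v$ in $\mathcal P'_{m,v}$, we use the lattice relations $\Lambda_v^{+,\epsilon_v}\cap V_v=\varpi_{E_v}\Lambda_v^\vee$ and $(\Lambda_v^{+,\epsilon_v})^\vee\cap V_v=\Lambda_v^\vee$. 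These should show that $\lambda'\circ\rho'^{-1}x$ extends on $X'$ if and only if $\lambda\circ\rho^{-1}x$ extends on $X$. The natural route is to compose with $\alpha$ and its dual and use $\alpha^\vee\circ\lambda'\circ\alpha=\lambda$. This is exactly the statement that the $\mathcal Y$-cycles are defined by the dual lattices, which agree on $V_v$.
\end{enumerate}
This dual-lattice compatibility is the main obstacle. One must show that the extension condition is preserved in both directions, not just one. I would first try to verify it on $\overline{\mathbb F}$-points via Dieudonné lattices, where it reduces to the displayed lattice identities. I would then extend it to arbitrary $S$ by rigidity of quasi-homomorphisms, following the proof of the corresponding statement in \cite[Section~12]{RSZ1} and \cite{Yao}.

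For the second claim, $\mathcal Y_{m,v}(x)\simeq\mathcal Z_{m,v}(\varpi_{E_v}x)$, the starting point is that $\mathcal N_{m,v}$ is of even $\varpi_{E_v}$-modular type. Its polarization therefore satisfies $\ker\lambda=X[\iota(\varpi_{E_v})]$, so $\lambda=\iota(\varpi_{E_v})\circ\mu$ for a quasi-isogeny $\mu$ that is an isomorphism up to a unit. Then $\lambda\circ\rho^{-1}x$ extends if and only if $\rho^{-1}(\varpi_{E_v}x)$ extends, after absorbing the unit and using $\mathcal O_E$-linearity to commute $\varpi_{E_v}$ past $\rho^{-1}x$. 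I expect this to be routine once the identification $\lambda=\varpi_{E_v}\mu$ is written out.

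Finally, each of these identities is Zariski-local on the base and compatible with the finite étale projection $p$. They therefore descend to $\mathcal X$ and globalize by the \cite[(5.9)]{RSZ3} graph construction.
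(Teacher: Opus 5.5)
Your unramified argument (a special homomorphism orthogonal to $y_0^+$ has zero component in the conjugate factor, so lifting is tested on the first factor) is fine. So is your step~1 through $\wt\delta_{\mathcal N,v}$. So is your derivation of $\mathcal Y_{m,v}(x)\simeq\mathcal Z_{m,v}(\varpi_{E_v}x)$ from $\ker\lambda=X[\iota(\varpi_{E_v})]$; the paper simply cites this as \cite[Proposition~3.11]{Yao}.

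The gap is step~2, which is the whole content of the first ramified identity; the paper takes it from \cite[Proposition~5.17]{Yao}. Write the universal isogeny as $\alpha\colon X\to Y$, from the $\varpi_{E_v}$-modular object to the $\mathcal P_{m,v}$-object $Y=X^\flat\times\bar{\mathcal E}$, so that $\lambda_X=\alpha^\vee\lambda_Y\alpha$. This is the direction forced by $q_v\cdot q_v^{m-2}\cdot q_v=q_v^{m}$. Composing with $\alpha^\vee$ then gives only the inclusion $\mathcal Y_{m-1,v}(x)\subset\mathcal N_{m-1,v}\times_{\mathcal N_{m,v}}\mathcal Y_{m,v}(x)$.

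The reverse inclusion cannot come from composing with $\alpha$, $\alpha^\vee$ or the complementary isogeny, because each loses a factor of $\varpi_{E_v}$. Concretely, by your own second identity the upper condition is $\mathcal Z_{m,v}(\varpi_{E_v}x)$. Pushing it along $\alpha$ only shows that $\varpi_{E_v}x$ lifts to $X^\flat$, i.e.\ in the lattice picture $x\in\varpi_{E_v}^{-1}\Lambda_v$. That is in general strictly weaker than $x\in\Lambda_v^\vee$, since $\Lambda_v^\vee$ has index $q_v$ in $\varpi_{E_v}^{-1}\Lambda_v$.

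Equivalently, $\lambda_Y\rho_Y^{-1}x=\varpi_{E_v}^{-1}\,\gamma\circ\lambda_X\rho_X^{-1}x$, where $\gamma\colon X^\vee\to Y^\vee$ satisfies $\alpha^\vee\gamma=\iota(\varpi_{E_v})$. You must prove this $\varpi_{E_v}$-divisibility over an arbitrary base. That requires controlling how $\ker\alpha$ over $(\mathcal P'_{m,v})^{\epsilon_v}$ sits relative to the product decomposition, i.e.\ a family version of $(\Lambda_v^{+,\epsilon_v})^\vee\cap V_v=\Lambda_v^\vee$. This is where $x\perp y_0^+$ and the branch $\epsilon_v$ genuinely enter.

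Your proposed way around this (check on $\overline{\mathbb F}$-points via Dieudonn\'e lattices, then extend to arbitrary $S$ by rigidity) does not work. Rigidity only says that $\lambda\circ\rho^{-1}x$ exists as a quasi-homomorphism over every $S$. The locus where it is an honest homomorphism is a closed formal subscheme, generally non-reduced, and its scheme structure is not determined by its geometric points. Agreement on $\overline{\mathbb F}$-points therefore gives only equality of the underlying reduced loci, not the Cartesian identity of formal schemes.

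That scheme structure is exactly what is used later, when $\LL^+_{\varpi_{E_v}x,v}$ is pulled back along $\delta^{\epsilon_v}_{\mathcal N,v}$ in Definition~\ref{general twisted Hodge bundle} and intersection numbers are computed. To close the gap, either give an $S$-point argument for the reverse inclusion using the structure of the isogeny space in \cite[Section~9]{RSZ1}, or cite \cite[Proposition~5.17]{Yao} as the paper does.
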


\begin{proof}
At an unramified place, the transition isogenies in the common polarized multichain preserve the lifting condition defining $\mathcal Z(x)$. This gives the first Cartesian identity.

At a ramified place, the first identity is \cite[Proposition~5.17]{Yao} on the isogeny space $\mathcal P'_{m,v}$, and the second is \cite[Proposition~3.11]{Yao}. Combining them gives the stated pullback relation for the fixed branch $\epsilon_v$.
\end{proof}

\begin{definition}\label{general twisted Hodge bundle}
Assume that $n$ is even and that $x$ is split. Regard $x$ as a vector in $(V^+)_{y_0^+}^{\perp}\subset V^+$. Since $\mathcal X^+$ has odd Shimura dimension, Definition~\ref{twisted Hodge bundle} gives a twisted Hodge bundle $\LL_x^+$ on $\mathcal Z^+(x)$.

At an unramified place, we pull back $\LL_x^+$ along the common-refinement morphism. At a ramified almost $\varpi_{E_v}$-modular place, Lemma~\ref{pullback compatibility lemma} gives
$$
 \LL_{x,v}:=(\delta_{\mathcal N,v}^{\epsilon_v})^*
              \LL^+_{\varpi_{E_v}x,v}.
$$
The generic-fibre identification and the Hermitian metric are induced by the universal transition isogeny. After descent from the common-refinement cover, these local bundles define an arithmetic line bundle $\LL_x$ on $\mathcal Z(x)$.

This defines $\LL_x$ at every finite place. At a ramified almost $\varpi_{E_v}$-modular place, the remaining vertical degree is written as $c_{n,v}\log N_v$ in Lemma~\ref{independence of n ramified place}. We determine $c_{n,v}$ in Section~\ref{Contributions of singular pseudo-Eisenstein series}.
\end{definition}

At every unramified place, the direct definition from the universal isogeny agrees with the pullback definition in Definition~\ref{general twisted Hodge bundle}. This follows from the compatibility of the universal lattice-chain isogeny with the orthogonal-summand embedding and base change. At a ramified almost $\varpi_{E_v}$-modular place, the corresponding compatibility follows from the universal isogeny over $\mathcal P'_{m,v}$ and the Cartesian diagrams in \cite[Sections~9 and~12]{RSZ1}.

\begin{remark}\label{twisted Hodge moduli remark}
The line $\LL_x$ is defined from the universal isogeny on the special divisor. The element $g_x$ is used only to identify this construction with a Hecke correspondence on the generic fiber. In dimension one, this agrees with the twisted Hodge class of \cite[Section~7.3.2]{YZZ2} and \cite[Section~4.7]{Yuan1}.
\end{remark}

All intersections below use the line bundle $\LL_x$ on $\mathcal Z(x)$. We do not assume an extension of $\LL_x$ to the whole of $\mathcal X$.

\subsubsection*{Special divisor and twisted Hodge bundle}

The importance of the twisted Hodge bundle lies in the fact that it behaves naturally with respect to special divisors and satisfies an induction property under restriction, which is exactly the feature needed in the proof of the modular height formula.  The following local lemma makes precise the point at which the curve calculation enters the higher-dimensional argument.

\begin{lemma}\label{relative Hodge compatibility lemma}
Let $v$ be unramified in $E/F$, and let $x$ be split at $v$. Put
$$
 \Delta_{x,v}=\LL|_{\mathcal Z(x),v}-\LL_{x,v}.
$$
Then:
\begin{enumerate}
\item The common-refinement construction is compatible with adding or removing an orthogonal self-dual summand.
\item If $v$ is split, or inert of odd residue characteristic, then $\Delta_{x,v}$ is a constant vertical divisor on each connected component of $\mathcal Z(x)$.
\item The normalized local Hodge multiplicity of the transition isogeny is unchanged after adding or removing an orthogonal self-dual summand. Hence it can be computed on a well-positioned unitary Shimura curve.
\end{enumerate}
\end{lemma}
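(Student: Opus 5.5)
We prove the three assertions in order; the second and third both rest on the first.

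\emph{Step 1: compatibility with self-dual summands.} At a place $v$ unramified in $E/F$, the local normal forms of Lemma~\ref{algebraic lemma}, Lemma~\ref{algebraic lemma inert case} and Corollary~\ref{lattice corollary split} show the following. After acting by $U_v$, the twisted lattice $\Lambda_{x,v}$ decomposes orthogonally as
$$
 \Lambda_{x,v}=\Lambda^{(2)}_{x,v}\perp a\Lambda^{(n-1)}_v.
$$
Here $\Lambda^{(2)}_{x,v}=\langle x',x^\perp\rangle$ lies in the span of $e_1,e_2$, and $\Lambda^{(n-1)}_v$ is self-dual, spanned by $e_3,\dots,e_{n+1}$. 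The self-dual summand enters only through the homothety by $a$. Hence the polarized multichain $\mathfrak L_{x,p}$ of Proposition~\ref{common refinement isogeny proposition} is the direct sum of the rank-two multichain with a homothety chain on the self-dual summand.

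On the Rapoport--Zink side, we use the standard unramified fact that adding a self-dual orthogonal summand corresponds to taking the product with a fixed étale/canonical-lift $p$-divisible group, a formally étale closed embedding of moduli problems. The universal transition isogeny then decomposes as the product of the rank-two transition isogeny with multiplication by $a$ on the extra factor. This gives (1), and it is functorial in base change.

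\emph{Step 2: computing $\Delta_{x,v}$ via the Lie algebra.} By definition,
$$
 \Delta_{x,v}=\LL|_{\mathcal Z(x),v}-\LL_{x,v}
$$
is the divisor of the map
$$
 \Omega_{\mathcal A,v}\to\Omega_{\mathcal A_x,v}
$$
induced by $\phi_x^{\mathrm{ref}}$. On the generic fibre this map is an isomorphism (it transports the metric), so $\Delta_{x,v}$ is vertical. Its multiplicity along a component of the special fibre is the length of the cokernel of
$$
 (J_{\mathfrak A_v}\Lie\phi_x)_1,
$$
the map induced on the rank-one summand of the Lie algebras.

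For unramified $v$ the signature splitting is an étale decomposition, so this summand is simply the $\iota$-isotypic rank-one piece. The multiplication-by-$a$ factor contributes nothing to it, since it lies in the $(n,0)$-part, where the Kottwitz condition forces it to be $\mathcal O$-isomorphic after normalizing by $\Nm(a)$. The cokernel length is therefore computed on the rank-two factor alone.

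\emph{Step 3: constancy on components.} It remains to show that the cokernel length is locally constant along $\mathcal Z(x)_v$. For split $v$, the $p$-divisible group decomposes as $\mathbb X\times\mathbb X^\vee$ with the signature part an étale/multiplicative-type splitting. The kernel of the rank-two isogeny is then a finite flat group whose formal part has fixed rank, determined by the type $(s,t)$, so the length is constant. For inert $v$ of odd residue characteristic, we plan to argue as in \cite[Prop.~4.3.2]{Zh1}: the depth of the formal kernel is rigid along each connected component. The reason is that $\mathcal Z(x)$ is flat over $\mathcal O_{E_v}$ by Kudla--Rapoport theory, so the vertical divisor is determined by its restriction to a generic point of each special-fibre component, where the isogeny type is constant.

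\emph{Step 4: reduction to curves.} Assertion (3) follows by combining Steps 1 and 2. The normalized local multiplicity depends only on the rank-two factor, which is the same as that of the well-positioned unitary Shimura curve attached to $\langle e_1,e_2\rangle$. There it equals Zhang's branch depth via \cite[Figure~2]{Guo2}.

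The main obstacle is Step 3 in the inert case. Controlling the formal kernel depth along supersingular components requires care, since the special divisor may have embedded or non-reduced structure. There I would first try to use the odd-residue-characteristic regularity of $\mathcal Z(x)$ to reduce to generic points.
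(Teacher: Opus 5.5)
Your Steps 1, 2 and 4 are essentially the paper's argument for (1) and (3). You split off the self-dual summand, note that it has no component in the signature-one summand of the Lie algebra, and read off Zhang's branch depth on the rank-two curve. The paper says the isogeny is the identity on the added summand; your ``multiplication by $a$'' works equally well, for the reason you give. Be aware, though, of what Steps 1--2 actually prove. The product decomposition of the universal transition isogeny holds only on the closed sublocus where the universal $p$-divisible group genuinely splits off the canonical factor, i.e.\ on well-positioned lower-dimensional special cycles. An orthogonal decomposition of the multichain does not make the local model a product, so elsewhere on $\mathcal Z(x)$ nothing splits. (Also, adding a canonical factor gives a closed immersion onto a special divisor, not a formally \'etale one.)

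The genuine gap is in (2) at inert places, and your Step 3 does not close it. First, Kudla--Rapoport divisors at inert places are not flat in general. For $n\ge2$ and $v(q(x))\ge1$ they acquire vertical components supported on Bruhat--Tits strata of the supersingular locus; Terstiege's computations for $\mathrm{U}(1,2)$ already show this. Second, and more seriously, reducing to generic points of special-fibre components only tells you the divisor is determined by its multiplicities there. You still have to show that the cokernel length is the \emph{same} at the generic points of the different irreducible components of one connected component. ``Rigidity of the formal kernel depth'' is exactly this claim, and it is not automatic: the connected--\'etale type of an isogeny kernel can change between components, as for the Frobenius and Verschiebung components of $X_0(p)$ modulo $p$.

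The missing idea is to derive (2) from (3) by test cycles, as the paper does. By the Bruhat--Tits stratification of \cite[Section~2.7]{LZ}, the open stratum of every maximal vertical component is met by a well-positioned lower-dimensional special cycle. That is precisely the kind of locus on which your Steps 1--2 are valid. So, by (3), the restriction of $\Delta_{x,v}$ to each such test cycle has the curve multiplicity, which depends only on the local type of $x$. Hence all maximal vertical components carry the same coefficient, and $\Delta_{x,v}$ is a constant multiple of the special fibre on each connected component.
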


\begin{proof}
The first statement follows from the universal polarized multichain and the functoriality of the Hodge bundle defined above under base change and orthogonal direct sums. See \cite[Definition~7.1 and Remark~7.2]{LMZ}.

We now prove (3). On a well-positioned rank-two curve, the transition isogeny is the same local isogeny that occurs in Zhang's calculation of the arithmetic Hecke action.  In the proof of \cite[Proposition~4.3.2]{Zh1}, if the formal part of the kernel has depth $i$, the induced map on invariant differentials has cokernel $\mathcal O_{F_v}/\mathfrak p_v^i$.  Thus one branch contributes the Hodge length $i$.  The number of branches is counted separately in the Hecke correspondence.  Adding an orthogonal self-dual summand does not change this cokernel, because the transition isogeny is the identity on the added summand.  Therefore the same integer $i$ is the local Hodge multiplicity in every ambient dimension.  At an inert place above $2$, we make the same calculation in the relative Hodge filtration and then use the absolute--relative comparison of \cite[Sections~3--6]{LMZ}.

For (2), when $v$ is inert of odd residue characteristic we use the Bruhat--Tits stratification as in \cite[Section~2.7]{LZ}.  Every maximal vertical component is detected by a well-positioned lower-dimensional special cycle meeting its open stratum.  By the preceding paragraph the restriction of $\Delta_{x,v}$ to every such test curve has the same multiplicity, depending only on the local orbit/type of $x$.  Hence all maximal vertical components occur with the same coefficient, so $\Delta_{x,v}$ is a constant multiple of the whole special fibre on each connected component.  The split case is the analogous elementary EL calculation.  At an inert place above $2$, we use only the normalized multiplicity in (3).
\end{proof}

\begin{proposition}\label{property of general twisted Hodge bundle}
Keep all the notations as above and suppose $x$ is a split vector.
\begin{enumerate}
\item Twisted Hodge bundles are compatible under restriction when $n$ varies.  In particular, for the fixed standard vector $y_0$, $\LL_{y_0}=\LL|_{\mathcal Z(y_0)}$.
\item At every unramified place the difference $\LL|_{\mathcal Z(x)}-\LL_x$ is vertical.  At split places and at inert places of odd residue characteristic it is a constant divisor.  At inert places above $2$, we use the normalized local Hodge multiplicity of Lemma~\ref{relative Hodge compatibility lemma}.  At a ramified almost $\varpi_{E_v}$-modular place the line is defined through the fixed $\mathcal Y$-cycle branch above, and its remaining branch-to-branch vertical degree is kept as an algebraic multiple of $\log N_v$.
\item When $n$ is odd,
\begin{equation}\label{twisted Hodge normalized identity odd}
 \frac{\mathcal Z(x)\cdot\LL_x\cdot\LL^{n-1}}{\deg_L Z(x)}
 =\frac{\mathcal Z(y_0)\cdot\LL^n}{\deg_L Z(y_0)}.
\end{equation}
\item When $n$ is even, the same identity holds up to
$$
   \sum_{v\in\Sigma_{\rm ram}}\overline{\QQ}\log N_v.
$$
Equivalently, it holds exactly at every unramified place.
\end{enumerate}
\end{proposition}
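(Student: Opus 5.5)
The plan is to prove the four items in order, with (1) and (2) feeding into (3) and (4).

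\emph{Item (1).} I would start from Lemma~\ref{relative Hodge compatibility lemma}(1): the common-refinement construction and the invertible sheaves of Definition~\ref{twisted Hodge bundle} are compatible with base change and with adding or removing an orthogonal self-dual summand. For the standard vector $y_0$ the norm $q(y_0)$ is a unit at every finite place. Hence the primitive vector is $y_0$ itself, one may take $a$ a unit, and $\Lambda_{y_0}=\Lambda$ locally everywhere (Definition~\ref{sub-lattice definition}). So $g_{y_0}=1$ and the transition isogeny $\phi_{y_0}^{\mathrm{ref}}$ is the identity, which gives $\LL_{y_0}=\LL|_{\mathcal Z(y_0)}$. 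At ramified almost $\varpi_{E_v}$-modular places I would additionally invoke the choice of the norm class of $y_0$ in Definition~\ref{y_0 vector} together with Lemma~\ref{pullback compatibility lemma}. These show that the pullback along $\delta^{\epsilon_v}_{\mathcal N,v}$ recovers the lower Hodge bundle.

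\emph{Item (2).} On the generic fibre, $\phi_x^{\mathrm{ref}}$ identifies the two automorphic Hodge bundles isometrically by construction of the metric. So $\LL|_{\mathcal Z(x)}-\LL_x$ has trivial metric part and is supported on finite fibres; away from the finitely many places where $\Lambda_x\neq\Lambda$ it vanishes. The rest of (2) is exactly Lemma~\ref{relative Hodge compatibility lemma}(2),(3) at split and inert places. At ramified places I would read it off Definition~\ref{general twisted Hodge bundle}, where the remaining degree is recorded as $c_{n,v}\log N_v$.

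\emph{Item (3), odd $n$.} Since $\Delta_x$ is vertical, I would write $\LL_x=\LL|_{\mathcal Z(x)}-\sum_v\Delta_{x,v}$. The key step is to show that
\[
\mathcal Z(x)\cdot\LL_x\cdot\LL^{n-1}=\wh{\deg}\bigl(\LL_x\cdot\LL|_{\mathcal Z(x)}^{n-1}\bigr)
\]
normalized by $\deg_L Z(x)$ equals the modular height of $Z(y_0)$. My first attempt would be a transport argument rather than an intersection computation. On the common refinement, Proposition~\ref{stabilizer comparison proposition} and Proposition~\ref{common refinement isogeny proposition} realize $(\mathcal Z(x),\LL_x)$ as the pullback, via $g_x$, of a special divisor attached to the primitive-normalized vector $x/a$ inside the twisted lattice $\Lambda_x\simeq\Lambda$. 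That vector has unit norm, and by the local orbit lemmas (Lemmas~\ref{algebraic lemma}, \ref{algebraic lemma inert case}, \ref{algebraic lemma ramified} with type $0$) it lies in the orbit of $y_0$ at every place. Hence $\mathcal A_x$ together with its Hodge line is the universal object over a copy of $\mathcal Z(y_0)$. The difficulty is that $\LL^{n-1}$ is still the untwisted bundle on the left-hand side. I would handle this by noting that $\Delta_{x,v}$ is a constant vertical divisor on each component. As a consequence, $\LL^{n-1}$ and $\LL_x^{n-1}$ differ in products with a vertical class only through terms $\Delta\cdot\Delta\cdot\ldots$, and these vanish because a constant multiple of a full fibre is numerically trivial against vertical classes. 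This gives $\mathcal Z(x)\cdot\LL_x\cdot\LL^{n-1}=\mathcal Z(x)\cdot\LL_x^{n}$. By the projection formula through the finite \'etale covers, dividing by degrees then yields $h_{\LL}(Z(y_0))$.

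\emph{Item (4), even $n$, and the main obstacle.} The same argument holds exactly at unramified places. At ramified places the constancy of the vertical discrepancy is only known up to the branch degree $c_{n,v}\log N_v$, which produces the error term in $\overline{\QQ}\log N_v$. I expect the main obstacle to be the constancy and vanishing-of-self-product step, namely showing that the vertical part pairs trivially against $\LL^{n-2}\cdot\Delta$. At inert places this needs the Bruhat--Tits stratification input from Lemma~\ref{relative Hodge compatibility lemma}, and at ramified places it is precisely what fails and is absorbed into the stated ambiguity.
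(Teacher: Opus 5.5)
\textbf{The gap is in item (3).} The step that fails is the one you use to pass from the transport statement to \eqref{twisted Hodge normalized identity odd}, namely
$\mathcal Z(x)\cdot\LL_x\cdot\LL^{n-1}=\mathcal Z(x)\cdot\LL_x^{n}$.
Write $\LL|_{\mathcal Z(x)}=\LL_x+\Delta_x$. You are right that every product containing $\Delta_x$ at least twice vanishes once $\Delta_x$ is a constant multiple of full fibres. But the expansion is
\[
 \mathcal Z(x)\cdot\LL_x\cdot\LL^{n-1}-\mathcal Z(x)\cdot\LL_x^{n}
 =(n-1)\,\mathcal Z(x)\cdot\Delta_x\cdot\LL^{n-1},
\]
and the right-hand side has only \emph{one} vertical factor. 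By flatness it equals $\deg_L Z(x)$ times $\sum_v c_v\log N_v$ (componentwise, in the fixed normalization). Here $c_v$ is the one-branch Hodge multiplicity of Lemma~\ref{relative Hodge compatibility lemma}. At a split place, for a vector of positive depth $k$, this is $k\log N_v\neq0$. So for $n\ge2$ your identity fails exactly for the vectors with a nontrivial transition isogeny. The obstacle you single out, pairing $\Delta$ against $\LL^{n-2}\cdot\Delta$, is the harmless part. The problematic term is the first-order one, $\LL_x\cdot\LL^{n-2}\cdot\Delta_x$.

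\textbf{What your route actually proves.} Grant your transport statement: on the common refinement, $\LL_x$ is the pullback of the Hodge bundle of a standard copy of $\mathcal Z(y_0)$, and the projection formula removes the degree. Then your argument gives
$\mathcal Z(x)\cdot\LL_x^{n}/\deg_L Z(x)=h_{\LL}(Z(y_0))$.
This differs from \eqref{twisted Hodge normalized identity odd} by $(n-1)\sum_v c_v\log N_v$. Fed into Lemma~\ref{lemma of twisted Hodge bundle}, that version counts the vertical discrepancy $n$ times instead of once. Then $\deg_L(X)D_0(y_v)$ would no longer be independent of $n$, as Lemma~\ref{independence of n} requires, and the split-place match $f_{\Phi_v,a}(1)=2S_{a,n}$ of Proposition~\ref{Explicit f term} would break.

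\textbf{Comparison with the paper.} The paper's proof does not use your intermediate identity. It goes directly to the mixed product, using the pullback description of the target Hodge bundle, the projection formula, and the one-branch multiplicity of Lemma~\ref{relative Hodge compatibility lemma}(3). Whatever route you take, you need an argument that computes $\mathcal Z(x)\cdot\LL_x\cdot\LL^{n-1}$ itself, with $\Delta_x$ entering linearly and exactly once. A self-power of a pulled-back bundle does not produce it, and your shortcut relating the two is false. Your items (1) and (2), the case $n=1$ of (3) (where $\LL^{n-1}$ is absent), and the ramified bookkeeping in (4) are in line with the paper.
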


\begin{proof}
The first statement is the base-change and orthogonal-summand compatibility of the universal transition isogeny and the Hodge bundle defined above.  For $y_0$ the transition is the identity.

At an unramified place, pass to the common-refinement level. The target Hodge bundle on the special divisor is the pullback of the Hodge bundle for the corresponding lower-dimensional standard datum. The projection formula removes the degree of this finite refinement. Lemma~\ref{relative Hodge compatibility lemma} shows that the local Hodge multiplicity of one branch is the same as on the unitary Shimura curve and is unchanged by the extra orthogonal self-dual summands.  Dividing by $\deg_L Z(x)$ therefore gives exactly the normalized identity in (3).  The same argument proves the unramified part of (4).  Here the curve calculation supplies the local Hodge multiplicity of one branch. The number of branches is counted separately in the local volume ratio.

When $n$ is odd and $v$ is ramified, the lattice is $\varpi_{E_v}$-modular and the same quotient-chain argument reduces the transition to the ramified curve calculation.  When $n$ is even and $v$ is ramified, Definition~\ref{general twisted Hodge bundle} uses the fixed $\mathcal Y$-cycle branch.  The vertical difference between the two maximal lattice branches has normalized degree $c_{n,v}\log N_v$, with $c_{n,v}$ as in Lemma~\ref{independence of n ramified place}. This gives the ramified term in (4).
\end{proof}

\begin{remark}\label{twisted Hodge curve remark}
When $n=1$, the universal transition isogeny defining $\LL_x$ is the moduli-theoretic realization of the corresponding branch in the arithmetic Hecke calculation of \cite{Zh1,Yuan1}.  Under the finite morphisms of \cite[Figure~2]{Guo2}, the Hodge bundle and its metric have the same normalization, up to the fixed factor explained in \cite[Remark~3.8]{Guo2}.  Here the local Hodge multiplicity refers to one branch. The full Hecke correspondence is obtained after summing over the branches with their multiplicities.
\end{remark}

Thus the twisted Hodge bundle is also defined at the almost $\varpi_{E_v}$-modular places. Its ramified vertical degree will be determined later in the local calculation.

The extra ramified term in Proposition~\ref{property of general twisted Hodge bundle} will appear in the explicit computation below. It is recorded in Lemma~\ref{independence of n ramified place} and determined in Section~\ref{Contributions of singular pseudo-Eisenstein series}.

\subsection{Modular heights of special divisors}\label{Modular heights of special divisors}

As introduced at the beginning of this section, we need to transform the expression of the arithmetic degree of generating series into another form that looks like an ``arithmetic Siegel--Weil formula". We will divide the expression into two parts: one that can be directly characterized using the geometric Siegel-Weil formula \ref{geometric Siegel--Weil formula}, and the other, which we called the $D$-term, becomes the central part of this whole section. Furthermore, we will express these $D$-terms in the form of pseudo-Eisenstein series. The explicit computation of this part will be carried out in the next section.

It is worth noticing that the discussion here is inspired by the discussion in \cite[Section 4.7]{Yuan1}. See also \cite[Section 7.3.2]{YZZ2} about some detail information. We remind the reader that in these references, authors represent the elements in the generating series as correspondences on $X_U\times X_U$, which makes the notation a little bit different. Nonetheless, the essence in the discussion remains unchanged.

\subsubsection*{The definition of $D$-terms}

We first introduce the following definition. For any $y\in V(E)$, define $D(y)\in\RR$ as follows:
\begin{equation}\label{D definition}
    D(y):=\mathcal{Z}(y)\cdot\hat{\xi}-\frac{\deg_L( Z(y))}{\deg_L (Z(y_0))}(\mathcal{Z}(y_0)\cdot\hat{\xi}).
\end{equation}
For convenience, we also introduce the notation
\begin{equation*}
    D_0(y)=\frac{D(y)}{\deg_L(Z(y))}=\frac{\mathcal{Z}(y)\cdot\hat{\xi}}{\deg_L(Z(y))}-\frac{\mathcal{Z}(y_0)\cdot\hat{\xi}}{\deg_L (Z(y_0))}.
\end{equation*}

The quantities $D(y)$ and $D_0(y)$ are invariant under the relevant
$U$-orbit of $y$. Locally they depend on the Hermitian norm together with
the local orbit/type described in the algebraic lemmas above.  We only use
$D_0(y)$ for the admissible positive vectors for which $\deg_L Z(y)\ne0$. At a ramified almost $\varpi_{E_v}$-modular place, the local discrepancy is determined by the norm together with the local orbit/type described above.

The reason behind this definition is that $D(y)$ or $D_0(y)$ can be viewed as the error term of the arithmetic Siegel--Weil formula. Indeed, note that a direct generalization of the geometric Siegel--Weil formula \ref{geometric Siegel--Weil formula} to the arithmetic version is not possible, i.e.,
\begin{equation*}
    \wh{\deg}_{\LL}(\mathcal{Z}(g,\Phi))\ne-\wh{\deg}_{\LL}(\mathcal{X}_U) E(0,g,\Phi).
\end{equation*}
Then a natural question is to measure the difference of these two sides, which leads us to introduce $D(y)$ and $D_0(y)$. Moreover, under this definition $D_0(y)$ is \textit{additive} in that
\begin{equation}\label{additive property}
    D_0(y)=\sum_{v\nmid\infty}D_0(y_v).
\end{equation}
The sum has only finitely many nonzero terms, which makes everything well-defined.

Using twisted Hodge bundles, we have the following lemma which gives a more intrinsic explanation of $D(y)$ and $D_0(y)$.

\begin{lemma}\label{lemma of twisted Hodge bundle}

Suppose $y\in\Lambda_f^\times$ is a split vector.  Then
\begin{equation}\label{D property}
 D_0(y)=
 \frac{\mathcal Z(y)\cdot
       (\LL|_{\mathcal Z(y)}-\LL_y)\cdot
       \LL|_{\mathcal Z(y)}^{\,n-1}}
      {\deg_L(X)\deg_L Z(y)}
\end{equation}
up to $\sum_{v\in\Sigma_{\rm ram}}\overline{\QQ}\log N_v$.  At every unramified place the equality holds exactly.  At split places and at inert places of odd residue characteristic, the vertical difference in the numerator is the pullback of a divisor on the local base. At inert places above $2$, we use the arithmetic-degree identity \eqref{D property}.

\end{lemma}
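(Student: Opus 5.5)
The plan is to unwind the definition of $D_0(y)$ and replace the reference term $\mathcal Z(y_0)\cdot\hat\xi/\deg_L Z(y_0)$ by the corresponding expression for $y$, built from the twisted Hodge bundle. Since $\hat\xi=\LL^n/\deg_L(X)$, the first term of $D_0(y)$ is
$$
 \frac{\mathcal Z(y)\cdot\LL|_{\mathcal Z(y)}^{\,n}}{\deg_L(X)\deg_L Z(y)},
$$
and the second term is the same expression with $y_0$ in place of $y$. The key input is Proposition~\ref{property of general twisted Hodge bundle}(3)--(4). It identifies
$$
 \frac{\mathcal Z(y_0)\cdot\LL^n}{\deg_L Z(y_0)}
 \quad\text{with}\quad
 \frac{\mathcal Z(y)\cdot\LL_y\cdot\LL^{n-1}}{\deg_L Z(y)}.
$$
This holds exactly when $n$ is odd, and up to $\sum_{v\in\Sigma_{\rm ram}}\overline{\QQ}\log N_v$ when $n$ is even. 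Dividing both sides by $\deg_L(X)$ and substituting into the definition of $D_0(y)$ gives
$$
 D_0(y)=\frac{\mathcal Z(y)\cdot\LL|_{\mathcal Z(y)}^{\,n}-\mathcal Z(y)\cdot\LL_y\cdot\LL|_{\mathcal Z(y)}^{\,n-1}}{\deg_L(X)\deg_L Z(y)}.
$$
Since arithmetic intersection numbers on $\mathcal Z(y)$ are multilinear, this difference equals the right-hand side of \eqref{D property}.

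I would order the steps as follows.
\begin{enumerate}
\item Recall that all pairings are the normalized RSZ pairings. Note that $\mathcal Z(y)\cdot\hat\xi$ can be computed on the finite \'etale common-refinement cover of Proposition~\ref{common refinement isogeny proposition}, where $\LL_y$ is defined; the projection formula then removes the degree of the refinement from both numerator and $\deg_L Z(y)$.
\item Apply Proposition~\ref{property of general twisted Hodge bundle}(3),(4) to replace the $y_0$ term.
\item Use multilinearity to collapse the difference into a single intersection against $\LL|_{\mathcal Z(y)}-\LL_y$.
\item Track the error. At unramified places Proposition~\ref{property of general twisted Hodge bundle}(4) holds exactly, so the equality is exact there. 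At ramified almost $\varpi_{E_v}$-modular places the discrepancy is the $c_{n,v}\log N_v$ branch term, which lies in $\overline{\QQ}\log N_v$.
\end{enumerate}

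For the structural claims about the numerator I would use Lemma~\ref{relative Hodge compatibility lemma}(2). At split places and inert places of odd residue characteristic, $\Delta_{y,v}=\LL|_{\mathcal Z(y),v}-\LL_{y,v}$ is a constant vertical divisor on each connected component, hence a pullback from the local base. Consequently the numerator at such $v$ is a rational multiple of $\log N_v$ times $\deg(L^{n-1}|_{Z(y)})$. At inert places above $2$ this argument is unavailable, and one simply takes \eqref{D property} as the arithmetic-degree identity furnished by the normalized multiplicity of Lemma~\ref{relative Hodge compatibility lemma}(3).

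The main obstacle I expect is making sure the metrics match on the generic fibre, so that the difference $\LL|_{\mathcal Z(y)}-\LL_y$ is purely vertical with no archimedean Green-current contribution. I would settle this by the construction: the metric on $\LL_y$ is transported from that of $\LL$ along the generic-fibre transition isogeny, so the archimedean parts cancel exactly. The other delicate point is that $\deg_L Z(y)\neq0$ must hold in order to divide, but the paper restricts attention to such admissible vectors.
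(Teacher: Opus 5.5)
Your proposal is correct and follows the paper's proof. Both invoke Proposition~\ref{property of general twisted Hodge bundle}(3)--(4) to replace the $y_0$ reference term by $\mathcal Z(y)\cdot\LL_y\cdot\LL^{n-1}/\deg_L Z(y)$ (exact for odd $n$, up to the ramified ambiguity for even $n$), subtract this from $\mathcal Z(y)\cdot\LL^n/\deg_L Z(y)$, divide by $\deg_L(X)$, and cite Lemma~\ref{relative Hodge compatibility lemma} for the vertical-divisor assertion at the good places; your remarks on the common-refinement cover and the transported metric only make explicit what the paper's construction of $\LL_y$ already provides.
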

\begin{proof}

We use $\LL_y$ on the special divisor $\mathcal Z(y)$ where it is defined. By Proposition~\ref{property of general twisted Hodge bundle},
$$
 \frac{\mathcal Z(y)\cdot\LL_y\cdot\LL^{n-1}}{\deg_L Z(y)}
 =\frac{\mathcal Z(y_0)\cdot\LL^n}{\deg_L Z(y_0)}
$$
with the stated ramified ambiguity.  Subtracting this identity from
$$
 \frac{\mathcal Z(y)\cdot\LL^n}{\deg_L Z(y)}
$$
and dividing by $\deg_L(X)$ gives exactly
$$
 \frac{\mathcal Z(y)\cdot\hat\xi}{\deg_L Z(y)}
 -\frac{\mathcal Z(y_0)\cdot\hat\xi}{\deg_L Z(y_0)},
$$
which is $D_0(y)$ by definition.  This proves \eqref{D property}.  The final assertion about constant vertical divisors at the good places is Lemma~\ref{relative Hodge compatibility lemma}.

\end{proof}

The conclusion of this lemma offers us a specific method for computing $D(y)$ and $D_0(y)$. Note that each term in \ref{D property} can be decomposed into local terms indexed by non-archimedean places $v$ of $F$. Thus, we can compute everything locally. We should remind readers that here $v$ in the summation index are places of $F$, not $E$. Since the arithmetic intersection is defined over $\mathcal{O}_E$, for those places $v$ split in $E/F$, $D_0(y_v)$ is actually the summation of two local terms indexed by $\nu_1$ and $\nu_2$, where $\nu_1$ and $\nu_2$ are places of $E$ over $v$.

Finally, we have the following theorem, which gives a decomposition of the
arithmetic degree of generating series.

\begin{theorem}\label{decomposition containing D}
    There exists a decomposition
    \begin{equation}
       \sum_{t\in F_+}\frac{(\LL\big|_{\mathcal{Z}_t})^n}{\deg_L(X)}=\sum_{y\in U\backslash V_f}r(g)\Phi(y)\big(\frac{\deg_L(Z(y))}{\deg_L(X)}h_{\LL}(Z(y_0))+D(y)\big),
    \end{equation}
    where
    \begin{equation*}
       h_{\LL}(Z(y_0))=\frac{\mathcal{Z}(y_0)\cdot(\LL^n)}{\deg_L(Z(y_0))}
    \end{equation*}
    is the modular height of $Z(y_0)$, a unitary Shimura variety of dimension $n-1$.
\end{theorem}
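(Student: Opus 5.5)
The identity is a formal rearrangement once the generating series is unfolded: the coefficient $\mathcal Z_t$ is expanded into its individual special divisors, and the definition \eqref{D definition} of $D(y)$ is substituted term by term.

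\emph{Step 1: unfold the left-hand side.} By the definition of the arithmetic generating series in \cite[Sec 4.3]{Guo2}, with $g$ inserted through the Weil representation, the $t$-th coefficient is
\begin{equation*}
 \wh{\mathcal Z}_t=\sum_{y\in U\backslash V_f,\ q(y)=t} r(g)\Phi(y)\,\mathcal Z(y)_U
\end{equation*}
as a normalized RSZ arithmetic class. By \eqref{divisor height series},
\begin{equation*}
 \frac{(\LL|_{\mathcal Z_t})^n}{\deg_L(X)}=\mathcal Z_t\cdot\hat\xi .
\end{equation*}
Linearity of the arithmetic intersection pairing then gives
\begin{equation*}
 \sum_{t\in F_+}\mathcal Z_t\cdot\hat\xi=\sum_{y\in U\backslash V_f}r(g)\Phi(y)\,\mathcal Z(y)\cdot\hat\xi .
\end{equation*}
The sum over $t$ and over the $y$ with $q(y)=t$ combines into a single sum over all $y$ with totally positive norm. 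Vectors whose norm is not totally positive give $Z(y)=\emptyset$, so they contribute zero to both sides. The sum is finite for each $t$ because $\Phi$ is a Schwartz function and the norm fibres are finite modulo $U$. Here I must check that the $\LL$-admissible correction terms in $\wh{\mathcal Z}_t$ pair to zero with $\hat\xi$. These corrections are vertical components and Green-current adjustments orthogonal to $\LL$-type classes, in the normalization of \cite[Sec 4.3]{Guo2}. I expect this to be the only point requiring real care: one has to confirm from the definition of $\LL$-admissible extension that admissibility exactly kills these contributions against $\hat\xi=\LL^n/\deg_L(X)$.

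\emph{Step 2: insert the $D$-term.} Rewriting \eqref{D definition} gives, for each $y$,
\begin{equation*}
 \mathcal Z(y)\cdot\hat\xi=D(y)+\frac{\deg_L Z(y)}{\deg_L Z(y_0)}\,\mathcal Z(y_0)\cdot\hat\xi .
\end{equation*}
Since $\hat\xi=\LL^n/\deg_L(X)$, we have
\begin{equation*}
 \frac{\mathcal Z(y_0)\cdot\hat\xi}{\deg_L Z(y_0)}=\frac{h_{\LL}(Z(y_0))}{\deg_L(X)},
\end{equation*}
with $h_{\LL}(Z(y_0))$ as in the statement. Substituting this into Step 1 yields exactly the claimed formula. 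The identification of $h_{\LL}(Z(y_0))$ as the modular height of an $(n-1)$-dimensional unitary Shimura variety follows from Definition~\ref{y_0 vector} and the discussion after it: $V_{y_0}^\perp$ satisfies the standard lattice hypotheses, and by Proposition~\ref{property of general twisted Hodge bundle}(1) one has $\LL|_{\mathcal Z(y_0)}=\LL_{y_0}$, which is the Hodge bundle of $Z(y_0)$.

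\emph{Step 3: well-definedness.} Finally I would note that $D(y)$ and $\deg_L Z(y)$ depend only on the $U$-orbit of $y$, as remarked after \eqref{D definition}. Hence the right-hand side is a well-defined sum over $U\backslash V_f$. Convergence as a formal $q$-expansion is inherited term by term from the left-hand side.
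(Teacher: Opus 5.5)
Your proposal is correct and follows the same route as the paper: expand $\wh{\mathcal Z}_*(g,\Phi)\cdot\hat\xi$ via \eqref{divisor height series} into $\sum_y r(g)\Phi(y)\,\mathcal Z(y)\cdot\hat\xi$ and substitute the definition \eqref{D definition} of $D(y)$. The paper gives no further argument and treats the identity as immediate from that definition. The admissibility check you flag in Step~1 is not an extra obligation: $D(y)$ and $h_{\LL}(Z(y_0))$ are defined with the same $\LL$-admissible classes $\mathcal Z(y)$ whose sum is $\wh{\mathcal Z}_t$, and the identification with $(\LL|_{\mathcal Z_t})^n/\deg_L(X)$ is already the content of \eqref{divisor height series}, which you cite.
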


Note that applying the geometric Siegel--Weil formula \ref{geometric Siegel--Weil formula}, the coefficient of the first term on the right hand side is
\begin{equation*}
    \sum_{y\in U\backslash V_f}r(g)\Phi(y)\frac{\deg_L(Z(y))}{\deg_L(X)}=-E_*(0,g,\Phi).
\end{equation*}
Thus, the contribution of the first term can be computed by induction on $n$. Meanwhile, the second term given by $D(y)$ will be computed in the next section.

\begin{remark}\label{D and general modular height remark}

The number $D_0(y)$ is the normalized Hodge discrepancy on the RSZ arithmetic special divisor $\mathcal Z(y)$. In the induction, the modular-height term is the height of the fixed standard divisor $Z(y_0)$. For a general $y$, the arithmetic class $\LL|_{\mathcal Z(y)}$ is the one used in the generating series. Thus the explicit calculation of
$$
   \frac{\mathcal Z(y)\cdot\hat\xi}{\deg_L Z(y)}
$$
is the input needed for the generating-series comparison. By
itself is not an additional modular-height formula for arbitrary parahoric
lattice data.  This statement is about the normalization of Hodge classes, not
about the existence or regularity of a separate non-PEL integral model.

\end{remark}

\subsubsection*{Pseudo-Eisenstein series from the $D$-term}
Keep all the notations as above. Now, to better handle the part of Theorem \ref{decomposition containing D} that involves $D(y)$, for any finite place $v$, we introduce a new series $\mathcal{F}_\Phi^{(v)}(g,h)$ for $g\in\UU(F)$ and $h\in \U(\BV)$. Define
\begin{equation*}
    \mathcal{F}_\Phi^{(v)}(g,h)=-\sum_{y\in U\backslash V_f}r(g,h)\Phi(y)\cdot D(h^{-1}_v y_v).
\end{equation*}
By definition, Theorem \ref{decomposition containing D} can be written as
\begin{equation*}
    \sum_{t\in F_+}\frac{(\LL\big|_{\mathcal{Z}_t})^n}{\deg_L(X)}=-E_*(0,g,\Phi) h_{\LL}(Z(y_0))-\sum_{v\nmid \infty}\mathcal{F}_\Phi^{(v)}(g,1).
\end{equation*}
For simplicity, we sometimes write $\mathcal{F}_\Phi^{(v)}(g)=\mathcal{F}_\Phi^{(v)}(g,1)$.

The following discussion is an analogue of the proof of \cite[Proposition 4.10]{Yuan1}. First, for any $a\in F^\times$, denote by
\begin{equation*}
    V_{f,a}=\BV_{f,a}=\{x\in\BV_f|q(x)=a\}.
\end{equation*}
This notation is compatible with our previous notation $\BV_{v,a}=V_{v,a}$ for any non-archimedean place $v$. Then we can write
\begin{equation*}
    \mathcal{F}_\Phi^{(v)}(g,h)=-\sum_{a\in F^\times}\sum_{y\in U\backslash V_{f,a}}r(g,h)\Phi(y)\cdot D(h^{-1}_v y_v).
\end{equation*}

Second, we split this summation as follows. Since our Schwartz function is always a pure tensor, we have
\begin{equation*}
\begin{aligned}
    \mathcal{F}_\Phi^{(v)}(g,h)=-\sum_{a\in F^\times}&\Big(\sum_{y\in U^v\backslash V_{f,a}^v}r(g,h)\Phi^v(y)\frac{\deg_L(Z(h^{-1}y))}{\deg_L(X)}\Big)\\
    &\cdot\Big(\sum_{y_v\in  U_v\backslash V_{v,a}}r(g,h)\Phi_v(y)\cdot\deg_L(X) D_0(h_v^{-1}y_v)\Big).
\end{aligned}
\end{equation*}
Note that the first part is almost a theta series, which is totally analytic.

Let $U_{y}=U\cap\U_y$, where $\U_{y}=\U(\BV_{y})$ is the unitary group associated with $\BV_{y}$, and $\BV_{y}$ is the orthogonal complement of $y$ in $\BV$. A key observation is that
\begin{equation}\label{geometric degree}
    \frac{\deg_L(X_U)}{\deg_L(Z(y))}=\frac{\vol(U_y)\vol(\U_{y,\infty})}{\vol(U)\vol(\U_\infty)}.
\end{equation}
Here $\U_\infty=\U(\BV_\infty)$ is the compact unitary group at archimedean places. In fact, we can separately provide the specific expressions for both $\deg_L(X_U)$ and $\deg_L(Z(y))$, and fundamentally, these expressions can be explained using Tamagawa number of unitary group. We refer to \cite[Proposition 4.2]{YZZ2} for the proof of this equality, although the case is different there, the idea remains unchanged.

Next, we can apply the local Siegel--Weil formula \cite[Proposition 3.1]{Guo1} to the first summation above. See also \cite[Proposition 6.2]{Ich}. Note that our Schwartz function $\Phi$ is invariant under the action of $U$, and the stabilizer of any $y\in V_{f,a}$ is $U_y$. Take the expression \ref{geometric degree}, we have
\begin{equation*}
    \sum_{y\in  U^v\backslash V_{f,a}^v}r(g,h)\Phi^v(y)\frac{\vol(U^v)}{\vol(U^v_y)}=-|a|^n_v\frac{\vol(\U_{y,\infty})}{\vol(\U_\infty)}W_a^v(0,g,r(h)\Phi).
\end{equation*}
Note that the choice of $y$ does not effect $\vol(\U_{y,\infty})$, hence the equation is well-defined. We also use the fact that $\gamma(\BV_v)\cdot\gamma(\BV^v)=-1$, since $\BV$ is incoherent. Also note that here we use the notation
\begin{equation*}
    W_a^v(0,g,r(h)\Phi)=\prod_{w\ne v}W_{a,w}(0,g,r(h)\Phi_w),
\end{equation*}
and the term $\vol(\U_\infty)$ comes from the contribution of archimedean places, since $\Phi_\infty$ is standard by our choice.

Finally, we define a new local function
\begin{equation}\label{f series}
    f_{\Phi_v,a}(g,h)=\deg_L(X)|a|^n_v \sum_{y\in U_v\backslash V_{v,a}}r(g,h)\Phi_v(y)\cdot\frac{\vol(U_v)}{\vol(U_{y,v})}D_0(h_v^{-1}y).
\end{equation}
For convenience, we will write $f_{\Phi_v,a}(g)=f_{\Phi_v,a}(g,1)$ and $f_{\Phi_v,a}(1)=f_{\Phi_v,a}(1,1)$.

Thus, combine the discussion above, we conclude that
\begin{equation}\label{F series}
     \mathcal{F}_\Phi^{(v)}(g,h)=\sum_{a\in F^\times}W^v_a(0,g,r(h)\Phi)f_{\Phi_v,a}(g,h)
\end{equation}
This is a pseudo-Eisenstein series. In fact, one can use this expression as the definition of the $\mathcal{F}_\Phi^{(v)}$-series.

\section{Explicit local terms}\label{Explicit local terms}

In this section we explicitly compute the local terms that appeared in the previous section, namely the local function \ref{f series}. Combined with the explicit computations in \cite[Section 5]{Guo2}, we finish all the computations about height series. We give a brief outline of the organization of this section.

First, we will use twisted Hodge bundles in Definition \ref{twisted Hodge bundle} to compute specific results at split places. Roughly speaking, following some explicit results in \cite{Zh1} and \cite{Yuan1} on quaternionic Shimura curve, we will first consider the case of unitary Shimura curve in Proposition \ref{explicit f term of unitary curve}, and then extend the result to general cases in Proposition \ref{Explicit f term}.

Next, we will use the explicit results at split places to obtain their counterpart at nonsplit places. For the inert places, similar to the previous case, we will also consider the case of unitary Shimura curve first in Proposition \ref{explicit f terms on curve inert case}. Then we will introduce an extremely important Lemma \ref{independence of n inert place}, which allows us to extend the computation on curve to general case in Proposition \ref{Explicit f term inert}.

Finally, the ramified calculation is parallel to the inert case, with an additional distinction when $n$ is even. At every nonsplit place an extra term $B_{a,v}$ appears. We compute its nonzero Fourier coefficients explicitly in this section and determine its zero-th coefficient in Section~\ref{Contributions of singular pseudo-Eisenstein series}. This term does not appear in the corresponding curve calculation of \cite[Section~4.7]{Yuan1}.

\subsection{Explicit local computations at split places}

Now we give the explicit expression of $f_{\Phi_v,a}(1)$ at each split place $v$. We first address the case of the unitary Shimura curve, utilizing the conclusions from \cite[Proposition 4.11]{Yuan1} and morphisms from \cite[Figure 2]{Guo2}. Then we proceed to compute the general case by leveraging the properties of twisted Hodge bundles.

\subsubsection*{Explicit local terms on Shimura curve: split case}

The explicit result of unitary Shimura curve is as follows.
\begin{proposition}\label{explicit f term of unitary curve}
    Keep all the notations as above, assume $n=1$, suppose $v$ is split in $E/F$. Then $f_{\Phi_v,a}(1)\ne0$ only if $a\in\mathcal{O}_{F_v}$. We have
    \begin{equation*}
    \begin{aligned}
        f_{\Phi_v,a}(1)
        =|d_v|^{\frac{3}{2}}\frac{1-N_v^{-2}}{(1-N_v^{-1})^2}\cdot \big(r(1-N_v^{-(r+2)})-(r+2)(N_v^{-1}-N_v^{-(r+1)})\big)\log N_v.
    \end{aligned}
    \end{equation*}
    Here $r=v(a)$.
\end{proposition}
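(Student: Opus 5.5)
The plan is to evaluate the finite orbit sum defining $f_{\Phi_v,a}(1)$ in \eqref{f series} directly for $n=1$, orbit by orbit, using the orbit parametrization of Lemma~\ref{algebraic lemma}. After that I will check the result against \cite[Proposition~4.11]{Yuan1} through the morphisms of \cite[Figure~2]{Guo2}. At a split place the standing Schwartz datum of \cite[Section~4.1]{Guo1} is $\Phi_v=1_{\Lambda_v}$, so $r(1)\Phi_v(y)=1_{\Lambda_v}(y)$. Since $\Lambda_{v,a}=\emptyset$ unless $a\in\mathcal O_{F_v}$, the vanishing claim is immediate. For $a\in\mathcal O_{F_v}$ with $r=v(a)$, Lemma~\ref{algebraic lemma} gives the orbit representatives $y_{s,t}$, indexed by types $(s,t)$ with $s,t\ge 0$ and $s+t\le r$. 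Hence
$$
f_{\Phi_v,a}(1)=\deg_L(X)\,|a|_v\sum_{s+t\le r}\frac{\vol(U_v)}{\vol(U_{y_{s,t},v})}\,D_0(y_{s,t}).
$$

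\emph{Step 1: the volume ratios.} Here $U_v\simeq\GL_2(\mathcal O_{F_v})$ as in \eqref{action of split unitary}, and $U_{y,v}$ is the stabilizer of $y$ acting on the line $V_{y,v}^\perp$. Hence $\vol(U_v)/\vol(U_{y,v})$ is the measure of the $U_v$-orbit of $y_{s,t}$ inside $V_{v,a}$, with respect to the measure used in the local Siegel--Weil formula \cite[Proposition~3.1]{Guo1}. I will compute it by counting vectors of type $(s,t)$ modulo $p_v^k$ for large $k$. Concretely, I count pairs of coordinate vectors with prescribed minimal valuations $s$ and $t$ and pairing $a$, and normalize with the self-dual measure. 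This normalization is where the factors $|d_v|^{3/2}$ and $(1-N_v^{-2})/(1-N_v^{-1})^2$ should come from: the latter is $\vol$ of $\GL_2(\mathcal O)$ against $\vol$ of the torus.

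\emph{Step 2: the local discrepancies.} By Lemma~\ref{lemma of twisted Hodge bundle}, which holds exactly at split places, $D_0(y_{s,t})$ is the normalized degree of the constant vertical divisor $\LL|_{\mathcal Z(y)}-\LL_y$. By Lemma~\ref{relative Hodge compatibility lemma}(3) and Remark~\ref{twisted Hodge curve remark}, this is the Hodge length of one branch in Zhang's computation \cite[Proposition~4.3.2]{Zh1}. For type $(s,t)$, the primitive vector $y'$ satisfies $v(q(y'))=r-2\min(s,t)$ after removing the $\mathcal O_{E_v}$-multiple. The twisted lattice of Definition~\ref{sub-lattice definition} then produces an isogeny whose kernel has depths $s$ and $t$ at the two places $\nu_1,\nu_2$ of $E$ above $v$. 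Summing the two local terms, I expect $\deg_L(X)D_0(y_{s,t})$ to be an explicit linear expression in $s$ and $t$ times $\log N_v$, which should equal $-(s+t)\log N_v$ up to normalization, divided by the factor fixed in \cite[Remark~3.8]{Guo2}.

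\emph{Step 3: summation and comparison.} Inserting Steps 1 and 2, the double sum over $s+t\le r$ becomes an arithmetico-geometric sum in $N_v^{-1}$. Summing first over $s+t=k$ and then over $k$ should yield $r(1-N_v^{-(r+2)})-(r+2)(N_v^{-1}-N_v^{-(r+1)})$, up to the prefactor. As a cross-check, I will push the curve computation to the quaternionic curve via \cite[Figure~2]{Guo2} and compare with \cite[Proposition~4.11]{Yuan1}. There the Hecke coefficient is the sum over branches of multiplicity times depth, which matches exactly the separation between branch count and Hodge multiplicity used here.

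I expect the main obstacle to be the normalizations rather than the combinatorics. The issues are the precise measure on $V_{v,a}$ and $U_{y,v}$, which produces $|d_v|^{3/2}$, the factor $\deg_L(X)$ cancelling against $D_0$, and the sign and branch conventions at $\nu_1,\nu_2$. I would fix these first on the simplest case $r=0$ and $r=1$, where the formula predicts $0$ and an explicit multiple of $(1-N_v^{-1})^2$, before doing the general sum.
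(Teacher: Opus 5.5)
\textbf{Step 2 is wrong, and it carries the whole content of the proposition.} Your skeleton is the paper's: evaluate \eqref{f series} orbit by orbit over the types $(s,t)$, multiply the volume ratio by the one-branch Hodge length, then sum over $j=s+t$ with $j+1$ types for each $j$.

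The error is in the primitive vector. At a split place $\mathcal O_{E_v}=\mathcal O_{F_v}\times\mathcal O_{F_v}$, so the primitive vector is obtained by stripping $p_v^s$ and $p_v^t$ from the two coordinates independently. Hence $v(q(y'))=r-s-t=:k$, not $r-2\min(s,t)$; equivalently $k=v(q(y^\perp))$ by Corollary~\ref{lattice corollary split}. The transition isogeny attached to the twisted lattice $\Lambda_y$ has formal kernel of depth $k$, and Zhang's computation gives $0^*\Omega_H\simeq\mathcal O_{F_v}/p_v^k$. After the normalization of \cite[Remark~3.8]{Guo2} and summing over $\nu_1,\nu_2$ this gives
$$\deg_L(X)D_0(y_{s,t})=(r-s-t)\log N_v.$$
So $D_0$ vanishes exactly on the $r+1$ orbits with $s+t=r$, where $y'$ has unit norm, $\Lambda_y=\Lambda_v$ and there is no isogeny. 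It is largest on the primitive orbit $(0,0)$.

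Your ansatz (kernel depths $s$ and $t$, and $\deg_L(X)D_0\approx-(s+t)\log N_v$) behaves the opposite way, and no constant normalization can repair it. At $r=1$ it puts all the mass on the orbits $(1,0)$ and $(0,1)$ and yields a constant times $|d_v|^{3/2}N_v^{-1}(1-N_v^{-2})\log N_v$. The true value $|d_v|^{3/2}(1-N_v^{-2})(1-N_v^{-1})\log N_v$ comes entirely from the orbit $(0,0)$, and the ratio of the two depends on $N_v$.

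Step 1 is a sound method, since orbit counting does compute these volume ratios. Your expectation about where the prefactor comes from is off, though. The count gives $\vol(U_v)/\vol(U_{y,v})=|d_v|^{3/2}(1-N_v^{-2})(N_v^k-N_v^{k-1})$ for $k>0$ and $|d_v|^{3/2}(1-N_v^{-2})$ for $k=0$. This depends on $k$ through the extra index for preserving the gluing of $E_vy$ with its orthogonal complement. The denominator $(1-N_v^{-1})^2$ is not a torus volume; it comes from the closed form $\sum_{j=0}^{r-1}(j+1)(r-j)(N_v^{r-j}-N_v^{r-j-1})=\bigl(r(N_v^{r+2}-1)-(r+2)(N_v^{r+1}-N_v)\bigr)/(N_v-1)^2$. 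With $k\log N_v$ in Step 2 and these volumes, your Step 3 gives the stated formula after multiplying by $|a|_v=N_v^{-r}$.
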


\begin{proof}
Put $k=r-s-t$ for a vector of type $(s,t)$. Let $m=(a)$. In the notation of \cite[(1.4.1)]{Zh1}, the branches of the local Hecke correspondence $T(m)$ are indexed by $G_m/G_1$. In the proof of \cite[Proposition~4.3.2]{Zh1}, Zhang writes the corresponding finite subgroup as $H$. If the formal part of $H$ has depth $k$, then
$$
 0^*\Omega_H\simeq\mathcal O_{F_v}/p_v^k.
$$
Thus the local Hodge multiplicity of this branch is $k$. For $k>0$, the number of branches with this multiplicity is
$$
 N_v^k-N_v^{k-1}=(N_v-1)N_v^{k-1}.
$$
After the normalization of \cite[Remark~3.8]{Guo2} and summing the two places $\nu_1,\nu_2$ of $E$ above $v$, this gives
\begin{equation}\label{key observation from quaternion}
 \deg_L(X)D_0(y_v)=k\log N_v.
\end{equation}
For $k=0$, the same formula gives zero.

The number of branches enters through the stabilizer $U_{y,v}$. Indeed, the stabilizer of the orthogonal-complement lattice is the larger group obtained from the matrix calculation, while preserving the gluing with the line $E_vy$ imposes an additional index $(N_v-1)N_v^{k-1}$.  Hence for $k>0$
$$
 \frac{\vol(U_v)}{\vol(U_{y,v})}
 =|d_v|^{3/2}(1-N_v^{-2})(N_v^k-N_v^{k-1}).
$$
Substituting this and \eqref{key observation from quaternion} into \eqref{f series} gives the stated formula.

The corresponding decomposition of the Hecke degree is
\begin{equation}\label{decompose the summation index}
 (r+1)+\sum_{j=0}^{r-1}(j+1)
 (N_v^{r-j}-N_v^{r-j-1})=\sum_{i=0}^rN_v^i.
\end{equation}
Here $j=s+t$, and there are $j+1$ pairs $(s,t)$ with this value. Therefore the corresponding weighted sum is
$$
 \sum_{j=0}^{r-1}(j+1)(r-j)
 (N_v^{r-j}-N_v^{r-j-1})
 =\frac{r(N_v^{r+2}-1)-(r+2)(N_v^{r+1}-N_v)}{(N_v-1)^2}.
$$
\end{proof}

\begin{remark}\label{split curve quaternion remark}
Note that the explicit expression in Proposition~\ref{explicit f term of unitary curve} is exactly the same as the one in \cite[Proposition~4.11]{Yuan1}. This follows from the relation between unitary and quaternionic Shimura curves in \cite[Figure~2]{Guo2}. The local computations therefore correspond to one another, although their summation indices are written differently because the two generating series use different parametrizations.
\end{remark}

\subsubsection*{Explicit local terms in general: split case}
Now we compute $f_{\Phi_v,a}(1)$ for general $n$ at split places. According to Lemma \ref{algebraic lemma}, for a general $n$, the summation indices in $f_{\Phi_v,a}(1)$ are completely the same. Moreover, a key observation is that we have the following lemma, which ensures that as $n$ varies, the main term in $f_{\Phi_v,a}(1)$, namely the $D$-term, remains unchanged.

\begin{lemma}\label{independence of n}
For a fixed local type, $\deg_L(X)D_0(y_v)$ is independent of the ambient dimension $n$.
\end{lemma}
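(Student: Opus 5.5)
The plan is to reduce the statement to Lemma~\ref{lemma of twisted Hodge bundle} and Lemma~\ref{relative Hodge compatibility lemma}. By Lemma~\ref{lemma of twisted Hodge bundle}, at a split place $v$ the local term $\deg_L(X)D_0(y_v)$ is the local contribution at the places $\nu_1,\nu_2$ of $E$ above $v$ to
$$
 \frac{\mathcal Z(y)\cdot(\LL|_{\mathcal Z(y)}-\LL_y)\cdot\LL|_{\mathcal Z(y)}^{\,n-1}}{\deg_L Z(y)}.
$$
There is no ramified ambiguity here, since $v$ is unramified. By Proposition~\ref{property of general twisted Hodge bundle}(2) and Lemma~\ref{relative Hodge compatibility lemma}(2), the difference $\Delta_{y,v}=\LL|_{\mathcal Z(y),v}-\LL_{y,v}$ is a constant vertical divisor on each connected component of $\mathcal Z(y)$. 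Write it as $c_v(y)$ times the special fibre over $\nu_i$.

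The first step is to compute the numerator. Intersecting $c_v(y)$ times the full special fibre with $\LL|_{\mathcal Z(y)}^{\,n-1}$ gives $c_v(y)\log N_\nu$ times the generic degree $\deg_L Z(y)$, because the degree of $\LL^{n-1}$ on a fibre equals its degree on the generic fibre by flatness. Dividing by $\deg_L Z(y)$ therefore gives
$$
 \deg_L(X)D_0(y_v)=\sum_{\nu\mid v}c_\nu(y)\log N_\nu.
$$
This is the local Hodge multiplicity of one branch of the transition isogeny $\phi_y^{\mathrm{ref}}$, counted on each component. The factor $\deg_L(X)$ cancels against the normalization of $\hat\xi$. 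Some care is needed with the normalization of \cite[Section~2]{Guo2}, so that the auxiliary degrees scale both sides equally.

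The second step shows that $c_\nu(y)$ depends only on the local type $(s,t)$ and not on $n$. Using Lemma~\ref{algebraic lemma} and Corollary~\ref{lattice corollary split}, choose a representative $y_v$ inside $\mathrm{span}(e_1,e_2)$. Then $\Lambda_v$ and $\Lambda_{y,v}$ split orthogonally as a rank-two part plus a summand that agrees with $\Lambda_v$ up to the homothety by $a$. Lemma~\ref{relative Hodge compatibility lemma}(1),(3) then says that removing the extra self-dual summands does not change the local Hodge multiplicity. So $c_\nu(y)$ equals the multiplicity computed on a well-positioned unitary Shimura curve. By Zhang's computation, as used in the proof of Proposition~\ref{explicit f term of unitary curve}, this is $k=r-s-t$, which depends only on the type.

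The main obstacle is the reduction in the second step. The scaled summand $a e_3,\dots,a e_{n+1}$ in $\Lambda_{y,v}$ is not literally the identity on the added summand. I would handle this by noting that multiplication by $a$ on that summand is a scalar isogeny. Its effect on $\omega_{\mathcal A_y}$ is through the rank-one factor selected by $J_{\mathfrak A_v}$, and the signature condition puts that factor in the rank-two block. Failing that, I would absorb the scalar into the choice of $g_y$ via Proposition~\ref{stabilizer comparison proposition}, since the EL-type split-place computation makes the Lie algebra decomposition explicit.
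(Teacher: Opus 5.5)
Your main line is the paper's proof. Like the paper, you identify $\deg_L(X)D_0(y_v)$ with the normalized local Hodge multiplicity of the transition isogeny via \eqref{D property}, remove orthogonal self-dual summands with Lemma~\ref{relative Hodge compatibility lemma}(3), and iterate down to the curve. Your evaluation of the numerator through the constant vertical divisor simply unpacks the paper's first sentence. The point you single out as the main obstacle is not addressed in the paper. Its proof of Lemma~\ref{relative Hodge compatibility lemma}(3) says the transition isogeny is the identity on the added summand, yet in Definition~\ref{sub-lattice definition} that summand is scaled by $a$. Your handling of this point, however, does not work as written.

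\textbf{First fix.} It fails. The signature condition selects an eigenspace of $\Lie(\mathcal A)$ for the $\mathcal O_E$-action. Both orthogonal blocks are $\mathcal O_E$-stable, so this condition cannot place the rank-one factor in the rank-two block. Moreover, on $\mathcal Z(y)$ the Hodge line is only constrained to be orthogonal to $y$, which leaves all of $V_y^\perp$, including the scaled summand, in play.

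\textbf{Second fix.} It cannot be an absorption. Since $\Nm(a)=q(y')$ has valuation $k>0$, $a$ is not unitary and cannot be moved into the unitary element $g_y$. What the proof of Proposition~\ref{stabilizer comparison proposition} actually provides is a factorization $\Lambda_y=a\,g_y\Lambda$, where $g_y$ is unitary and is the identity on $e_3,\dots,e_{n+1}$. Likewise, on the curve one has $\mathcal O_{E_v}y'+\mathcal O_{E_v}y^\perp=a\,g_y^{(2)}\Lambda^{(2)}$, where $\Lambda^{(2)}=\mathcal O_{E_v}e_1+\mathcal O_{E_v}e_2$ and $g_y=g_y^{(2)}\oplus 1$.

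The transition is then a composite of two pieces:
\begin{itemize}
\item the scalar endomorphism $[a]$, which acts on the rank-one factor through the embedding selecting it, and so shifts the Hodge bundle by the same vertical divisor in every dimension;
\item the quasi-isogeny attached to $g_y$, for which the identity-on-the-summand hypothesis of Lemma~\ref{relative Hodge compatibility lemma}(3) genuinely holds.
\end{itemize}
To close the argument you still need two inputs. First, additivity of the Hodge divisor under composition. Second, a version of Lemma~\ref{relative Hodge compatibility lemma}(3) for quasi-isogenies, because $g_y\Lambda\not\subset\Lambda$ when $k>0$.
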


\begin{proof}
By \eqref{D property}, this is the normalized local Hodge multiplicity of the transition isogeny.  By Lemma~\ref{relative Hodge compatibility lemma}(3), an orthogonal self-dual summand does not change its local Hodge multiplicity.  Repeating this reduction gives the curve case.
\end{proof}

Thus, in order to give the explicit expression of $f_{\Phi_v,a}(1)$, it is sufficient to compute the terms $\vol(U_v)$ and $\vol(U_{y,v})$ for general $n$.

The following proposition generalizes Proposition~\ref{explicit f term of unitary curve}. In the comparison, the $D$-term comes from the arithmetic generating series, while $S_{a,n}$ comes from the derivative of the Whittaker function in \cite[Lemma~4.5]{Guo1}.
\begin{proposition}\label{Explicit f term}
    Suppose $v$ is split in $E/F$, then $f_{\Phi_v,a}(1)\ne0$ only if $a\in\mathcal{O}_{F_v}$. We have
    \begin{equation*}
    \begin{aligned}
        f_{\Phi_v,a}(1)
        =|d_v|^{n+\frac{1}{2}}\frac{1-N_v^{-(n+1)}}{(1-N_v^{-n})^2}\cdot \big(r(1-N_v^{-(r+2)n})-(r+2)(N_v^{-n}-N_v^{-(r+1)n})\big)\log N_v.
    \end{aligned}
    \end{equation*}
    Here $r=v(a)$. Note that the explicit expression of $f_{\Phi_v,a}(1)$ is equal to the ``extra'' term
    \begin{equation*}
    \begin{aligned}
        S_{a,n}=|d_v|^{n+\frac{1}{2}}\frac{1-N_v^{-(n+1)}}{2(1-N_v^{-n})^2}\cdot \big(r(1-N_v^{-(r+2)n})-(r+2)(N_v^{-n}-N_v^{-(r+1)n})\big)\log N_v.
    \end{aligned}
    \end{equation*}
    of $W'_{a,v}(0,1,\Phi_v)-\frac{1}{2}\log|a|_v W_{a,v}(0,1,\Phi_v)$ in \cite[Lemma 4.5]{Guo1}.
\end{proposition}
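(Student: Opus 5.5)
We prove the formula by evaluating \eqref{f series} at $g=h=1$, following the curve case. By Lemma~\ref{algebraic lemma}, the orbits $U_v\backslash V_{v,a}$ meeting the support of $\Phi_v$ are indexed by types $(s,t)$ with $s,t\ge0$ and $s+t\le r=v(a)$, for every $n\ge1$. In particular $f_{\Phi_v,a}(1)=0$ unless $a\in\mathcal O_{F_v}$, since $\Phi_v$ is the characteristic function of $\Lambda_v$ at a split place. For a vector of type $(s,t)$ put $k=r-s-t$. By Lemma~\ref{independence of n} and \eqref{key observation from quaternion},
\[
 \deg_L(X)D_0(y_v)=k\log N_v,
\]
independently of $n$. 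It therefore remains to compute the volume ratio $\vol(U_v)/\vol(U_{y,v})$ for general $n$ and then sum.

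For the volume ratio, the plan is to mirror the curve argument using Corollary~\ref{lattice corollary split} and Proposition~\ref{stabilizer comparison proposition}. Under $U_v\cong\GL_{n+1}(\mathcal O_{F_v})$ and $U_{y,v}\subset\GL_n(\mathcal O_{F_v})$, the ratio factors into two pieces. The first is the ratio of the unimodular volumes $\vol(\GL_{n+1}(\mathcal O_{F_v}))/\vol(\GL_n(\mathcal O_{F_v}))$, which with the self-dual measure normalization of \cite[Section~2.1]{Guo1} gives $|d_v|^{n+\frac12}(1-N_v^{-(n+1)})$. The second is the gluing index between $\mathcal O_{E_v}y$ and $\Lambda_v\cap V_{y,v}^\perp$ at depth $k$. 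This index is the number of primitive vectors modulo $p_v^k$ in a rank-$n$ lattice, divided by units, namely
\[
 N_v^{kn}-N_v^{(k-1)n}=N_v^{kn}(1-N_v^{-n})
\]
for $k>0$. This generalizes the curve factor $(N_v-1)N_v^{k-1}$. The main obstacle is making this index count rigorous: we must identify the stabilizer of the gluing with the mirabolic-type subgroup of $\GL_n(\mathcal O_{F_v})$ fixing a primitive vector mod $p_v^k$. We also need to check the measure normalization so that the constant is exactly $|d_v|^{n+1/2}$, and not a different power of $|d_v|$.

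Substituting into \eqref{f series}, together with the factor $|a|_v^n=N_v^{-rn}$, we get
\[
 f_{\Phi_v,a}(1)=|d_v|^{n+\frac12}(1-N_v^{-(n+1)})\log N_v\sum_{j=0}^{r-1}(j+1)(r-j)\,N_v^{-rn}\bigl(N_v^{(r-j)n}-N_v^{(r-j-1)n}\bigr).
\]
Here $j=s+t$, there are $j+1$ types with this value, and the terms with $k=0$ vanish. The sum is the same elementary sum as in the proof of Proposition~\ref{explicit f term of unitary curve}, with $N_v$ replaced by $N_v^n$. The closed form there gives
\[
 \frac{r\bigl(N_v^{(r+2)n}-1\bigr)-(r+2)\bigl(N_v^{(r+1)n}-N_v^{n}\bigr)}{(N_v^n-1)^2}.
\]
After multiplying by $N_v^{-rn}$ and rewriting in terms of $N_v^{-n}$, this yields exactly the stated formula. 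The final remark then follows by direct comparison with $S_{a,n}$ from \cite[Lemma~4.5]{Guo1}: the two expressions agree up to the factor $\frac12$ as written, which accounts for the two places $\nu_1,\nu_2$ of $E$ above $v$ in the normalization. As a consistency check, the count against the Hecke degree \eqref{decompose the summation index}, generalized to $\sum_i N_v^{in}$, should confirm the index computed above.
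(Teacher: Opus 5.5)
Your proposal is correct and follows the paper's proof. It uses the same input $\deg_L(X)D_0(y_v)=k\log N_v$ from Lemma~\ref{independence of n}, the same volume ratio \eqref{quotient of volume at split prime}, and the same elementary sum with $N_v$ replaced by $N_v^n$. Your single mirabolic index $N_v^{kn}-N_v^{(k-1)n}$ is exactly the product of the paper's orthogonal-complement factor $N_v^{(n-1)(k-1)}\frac{N_v^n-1}{N_v-1}$ and its gluing index $(N_v-1)N_v^{k-1}$, so you should drop the phrase ``divided by units'': dividing by units would give only the first factor. Separately, the factor $2$ in $f_{\Phi_v,a}(1)=2S_{a,n}$ comes from the coefficient $2$ in $2\big(W'_{a,v}-\frac12\log|a|_vW_{a,v}\big)$ in the comparison of Section~\ref{Comparison of two series}, where it occurs at nonsplit places as well, not from the two places of $E$ above $v$.
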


\begin{proof}
Put again $k=r-s-t$.  By Lemma~\ref{independence of n} and the curve calculation,
$$
 \deg_L(X)D_0(y_v)=k\log N_v.
$$
Thus only the true point stabilizer $U_{y,v}$ remains to be computed.

The coordinate calculation for the stabilizer of the orthogonal-complement lattice gives the factor
$$
 |d_v|^{n+\frac12}(1-N_v^{-(n+1)})
 N_v^{(n-1)(k-1)}\frac{N_v^n-1}{N_v-1}.
$$
The ambient lattice is obtained by gluing the line $E_vy$ to its orthogonal complement. Preserving this gluing gives the additional index
$$
 (N_v-1)N_v^{k-1}.
$$
Therefore, for $k>0$,
\begin{equation}\label{quotient of volume at split prime}
 \frac{\vol(U_v)}{\vol(U_{y,v})}
 =|d_v|^{n+\frac12}(1-N_v^{-(n+1)})
   N_v^{n(k-1)}(N_v^n-1).
\end{equation}
For $k=0$ the $D_0$-term is zero.  Substitution into \eqref{f series} gives exactly the formula in the proposition.  This is the product of the orthogonal-complement stabilizer factor and the number of branches in Zhang's Hecke calculation.
\end{proof}

Thus $f_{\Phi_v,a}(1)=2S_{a,n}$ at a split place, which is the cancellation used in Section~\ref{Comparison of two series}.

\begin{remark}
    Note that in this proposition, when $r=1$, we can actually recover the values of $a(p)$ as stated in the \cite[Theorem 6.1.2, Theorem 8.4.2]{BH} after a careful comparison of the notations.
\end{remark}

\subsection{Explicit local computations at inert places}
Now we consider $f_{\Phi_v,a}(1)$ at inert places, again starting with $n=1$. The local Hodge multiplicity of one branch is the same multiplicity computed in the split case by Zhang and used in \cite[Proposition~4.11]{Yuan1}. The relation between quaternionic and unitary Shimura curves is given in \cite[Figure~2]{Guo2}. What changes at an inert place is the number of branches in each orbit, which is determined by Lemma \ref{algebraic lemma inert case}. We first compute the curve case and then use the same local multiplicity in general dimension.

\subsubsection*{Explicit local terms on Shimura curve: inert case}

The summation index of the $f$-series is $U_v\backslash\Lambda_{v,a}$. By Lemma \ref{algebraic lemma inert case}, this index is different from the split case. The local Hodge multiplicity of each branch, however, is the same one as in Zhang's calculation. We first use this fact for the unitary Shimura curve.

\begin{proposition}\label{explicit f terms on curve inert case}
    Keep all the notations above, and assume $n=1$. Suppose $v$ is inert in $E/F$. Then $f_{\Phi_v,a}(1)\ne0$ only if $a\in\mathcal{O}_{F_v}$. Moreover, if $r=v(a)$, we have
    \begin{equation*}
    \begin{aligned}
        f_{\Phi_v,a}(1)
        =|d_v|^{\frac{3}{2}}(1-N_v^{-2})
        \sum_{i=0}^{[\frac{r}{2}]}
        \big((r-2i)N_v^{-2i}+(r-2i)N_v^{-2i-1}\big)\log N_v.
    \end{aligned}
    \end{equation*}
    Especially, if $y\in\Lambda_{v,a}$ has type $s$, we have
    \begin{equation*}
\deg_L(X)D_0(y_v)=s\log N_v.
    \end{equation*}
\end{proposition}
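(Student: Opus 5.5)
The plan is to copy the structure of the proof of Proposition~\ref{explicit f term of unitary curve}. The summation index and the branch counts are replaced by their inert analogues from Lemma~\ref{algebraic lemma inert case}. The local Hodge multiplicity of a single branch is still taken from Zhang's calculation, via Lemma~\ref{relative Hodge compatibility lemma}(3).

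\emph{Vanishing for $a\notin\mathcal O_{F_v}$.} Since $\Phi_v$ is the characteristic function of $\Lambda_v$ (or of its standard translate), $r(1)\Phi_v(y)=0$ unless $y\in\Lambda_v$. In that case $a=q(y)\in\mathcal O_{F_v}$.

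\emph{Setup for $a\in\mathcal O_{F_v}$.} Put $r=v(a)$. By Lemma~\ref{algebraic lemma inert case}, the orbits $U_v\backslash\Lambda_{v,a}$ are represented by vectors $x_s$ of type $s=r-2i$, for $0\le i\le[\frac r2]$. Each such vector has the form $x_s=p_v^{i}x_s'$ with $x_s'$ primitive and $v(q(x_s'))=s$.

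\emph{Step 1: the $D$-term.} Note that $r$ is even when $x$ is split. I would identify the transition isogeny $\phi_x^{\mathrm{ref}}$ on a rank-two well-positioned curve with a branch of the Hecke correspondence, through \cite[Figure~2]{Guo2}. The scalar factor $p_v^i$ contributes an isogeny that is multiplication by $p_v^i$ on both $\mathcal A$ and $\mathcal A_x$. It therefore does not change the Hodge line, because the twisted lattice $\Lambda_x$ is built from the primitive vector $x'$. The remaining part is a cyclic-type isogeny whose formal kernel has depth $s$, so its cokernel on invariant differentials is $\mathcal O_{F_v}/p_v^{s}$. Combining Lemma~\ref{lemma of twisted Hodge bundle} with Lemma~\ref{relative Hodge compatibility lemma}(2), the difference $\LL|_{\mathcal Z(y)}-\LL_y$ is the constant vertical divisor $s$ times the special fibre. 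Applying the normalization of \cite[Remark~3.8]{Guo2} then gives $\deg_L(X)D_0(y_v)=s\log N_v$. Unlike the split case, $v$ has a single place of $E$ above it, but $\log N_{E_\nu}=2\log N_v$. I expect this factor $2$ to be absorbed by the same normalization that produced the sum over $\nu_1,\nu_2$ in the split case; this needs to be checked.

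\emph{Step 2: the volume ratio.} For a vector of type $s>0$ I would aim to show
\[
\frac{\vol(U_v)}{\vol(U_{y,v})}=|d_v|^{3/2}(1-N_v^{-2})(N_v^{s}+N_v^{s-1}).
\]
This decomposes as follows.
\begin{itemize}
\item The stabilizer of the orthogonal-complement lattice $\mathcal O_{E_v}x^\perp$ is $\mathrm U(1)(\mathcal O_{F_v})$.
\item Preserving the gluing of $E_vy$ with its complement through the finite quotient of length $s$ adds an index. This index equals the number of primitive vectors of norm valuation $s$ modulo units in one orbit, namely $N_v^{s-1}(N_v+1)$.
\item As a consistency check, these counts should reproduce the inert Hecke degree. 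This is the analogue of \eqref{decompose the summation index}, with $N_v+1$ replacing $N_v-1$, because the residue unitary group $\mathrm U(1,1)(\mathbb F_{N_v})$ has $N_v+1$ isotropic lines.
\end{itemize}

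\emph{Step 3: substitution.} Insert Steps 1 and 2 into \eqref{f series}, using $|a|_v=N_v^{-r}$. The $i$-th term is
\[
N_v^{-r}\cdot s\,(N_v^{s}+N_v^{s-1})=(r-2i)\bigl(N_v^{-2i}+N_v^{-2i-1}\bigr).
\]
Summing over $i$ gives the stated formula. The orbit with $s=0$ contributes nothing because its $D_0$-term vanishes.

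\emph{Main obstacle.} I expect the hardest part to be the branch count in Step 2, specifically checking that the gluing index is exactly $N_v^{s-1}(N_v+1)$. Without this the result is off by a factor. I would first verify it directly for $r=1,2$ by explicit matrices in $\mathrm U(1,1)(\mathcal O_{F_v})$, and then argue by induction on $s$ via the reduction $x\mapsto x/p_v$.
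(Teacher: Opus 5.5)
Your decomposition matches the paper's. The per-branch Hodge multiplicity $s\log N_v$ and the branch count $(N_v+1)N_v^{s-1}$ are kept separate, and the branch count is read as the index of $U_{y,v}$ in the stabilizer of $\Lambda_v\cap V_y^\perp$. Substituting into \eqref{f series} gives $N_v^{-r}s(N_v^{s}+N_v^{s-1})=(r-2i)(N_v^{-2i}+N_v^{-2i-1})$, exactly as you write. The step you single out as the main obstacle is short. Put $L_1=\mathcal O_{E_v}y'$ and $L_2=\Lambda_v\cap V_y^\perp$. The self-dual lattice $\Lambda_v$ is the graph of an anti-isometry $L_1^\vee/L_1\simeq\mathcal O_{E_v}/p_v^{s}\to L_2^\vee/L_2$. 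Hence $u\in\mathcal O_{E_v}^1$ acting on $V_y^\perp$ preserves $\Lambda_v$ if and only if $u\equiv 1 \bmod p_v^{s}$. Since $E_v/F_v$ is unramified, the norm is surjective on units modulo $p_v^s$ and norm-one classes lift to $\mathcal O_{E_v}^1$. So the index is $|(\mathcal O_{E_v}/p_v^s)^\times|/|(\mathcal O_{F_v}/p_v^s)^\times|=(N_v+1)N_v^{s-1}$, and no matrix verification or induction on $s$ is needed.

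The gap is in Step 1. The objects you use there are $\Lambda_x$, $g_x$, $\phi^{\mathrm{ref}}_x$, $\LL_y$ and Lemma \ref{lemma of twisted Hodge bundle}, and all of them are available only for split vectors. At an inert place this forces $s=v(q(x'))$ to be even, since one needs $a$ with $\Nm(a)=q(x')$. So your argument gives $\deg_L(X)D_0(y_v)=s\log N_v$ only for even $r$. The proposition, including its last assertion, covers every $r$, and for odd $r$ every type $s$ is odd. You also leave open whether the single place of $E$ above $v$, with $\log N_\nu=2\log N_v$, gives the same normalized value as the two places $\nu_1,\nu_2$ in the split case.

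The paper settles both points with one observation. The per-branch multiplicity is that of a branch of the quaternionic Hecke correspondence, transported through \cite[Figure~2]{Guo2}, so it does not depend on the CM extension. One may therefore pass to an auxiliary $E'/F$ in which $v$ splits. Then the already normalized value $k\log N_v$ of Proposition \ref{explicit f term of unitary curve} transfers to every inert type $s$, odd or even, without the split-vector construction. That value already includes the sum over $\nu_1,\nu_2$ and the factor of \cite[Remark~3.8]{Guo2}. Only the branch count differs between split and inert places, and that is your Step 2. Your version would suffice for the later general-$n$ inert computation, which uses only even $r$, but not for the curve statement as written.
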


\begin{proof}
We compute the local Hodge multiplicity by the same local isogeny as in Zhang's calculation. Changing the auxiliary global CM extension $E'/F$ so that $v$ becomes split in $E'/F$ does not change this local multiplicity. Hence a vector of inert type $s$ contributes
$$
 \deg_L(X)D_0(y_v)=s\log N_v.
$$
The number of branches is different at an inert place. For $s>0$, a type-$s$ orbit contains
$$
 (N_v+1)N_v^{s-1}=N_v^s+N_v^{s-1}\qquad(s>0),
$$
branches, while the type-$0$ orbit has one branch. Equivalently, this is the index which appears when comparing $U_{y,v}$ with the stabilizer of the orthogonal-complement lattice. Therefore
$$
 \frac{\vol(U_v)}{\vol(U_{y,v})}
 =|d_v|^{3/2}(1-N_v^{-2})(N_v+1)N_v^{s-1}
 \qquad(s>0),
$$
and the type-$0$ ratio is $|d_v|^{3/2}(1-N_v^{-2})$.

Substitution in \eqref{f series} gives the stated formula for $f_{\Phi_v,a}(1)$.  The elementary identity
\begin{equation}\label{decompose the summation index at inert place}
 \sum_{i=0}^{[r/2]}(N_v^{r-2i}+N_v^{r-2i-1})
 =\sum_{i=0}^rN_v^i,
\end{equation}
with the last summand on the left interpreted as $1$ when $r=2i$, is the corresponding decomposition of the total Hecke degree.
\end{proof}

\begin{remark}\label{explanation of inert computation}
Formula \eqref{decompose the summation index at inert place} counts the branches. The local Hodge multiplicity of one branch is $s$, as in Zhang's split calculation, while the number of branches is contained in $\vol(U_v)/\vol(U_{y,v})$. We keep these two factors separate below.
\end{remark}

The following proposition implies that $f_{\Phi_v,a}(1)$ does not match the corresponding part in $W'_{a,v}(0,1,\Phi_v)-\frac{1}{2}\log|a|_v W_{a,v}(0,1,\Phi_v)$ when $v$ is inert in $E/F$.  Nonetheless, this difference can be computed explicitly, and has a nice expression.
\begin{proposition}\label{B series inert}
    For $r=v(a)$, the difference between $f_{\Phi_v,a}(1)$ and the term
    \begin{equation*}
        2S_{a,1}=|d_v|^{\frac{3}{2}}\frac{1-N_v^{-2}}{(1-N_v^{-1})^2} \big(r(1-N_v^{-(r+2)})-(r+2)(N_v^{-1}-N_v^{-(r+1)})\big)\log N_v
    \end{equation*}
    is
    \begin{equation*}
        B_{a,v}(1)=2|d_v|^{\frac{3}{2}}(1-N_v^{-2})\sum_{i=1}^r [\frac{i+1}{2}]\cdot N_v^{-i} \log N_v.
    \end{equation*}
    This definition of $B_{a,v}(1)$ extends to $B_{a,v}(g)$ for $g\in\UU(F_v)$ in general.
\end{proposition}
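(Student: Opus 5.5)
The statement is an elementary identity between the explicit formula of Proposition~\ref{explicit f terms on curve inert case} and the closed form of $2S_{a,1}$ from Proposition~\ref{Explicit f term} with $n=1$. So the plan is purely computational. Both sides share the factor $|d_v|^{3/2}(1-N_v^{-2})\log N_v$; I factor it out and put $q=N_v^{-1}$. Then everything becomes an identity between polynomials in $q$ of degree at most $r$, and I compare coefficients of $q^m$ for $0\le m\le r$.

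\emph{Step 1: expand $2S_{a,1}$.} I show that
$$\frac{r(1-q^{r+2})-(r+2)(q-q^{r+1})}{(1-q)^2}=\sum_{m=0}^{r}(r-2m)\,q^m.$$
To prove this, I multiply the right-hand side by $(1-q)^2=1-2q+q^2$. The coefficient of $q^m$ in the product is $(r-2m)-2(r-2m+2)+(r-2m+4)=0$ for $2\le m\le r$. The boundary coefficients at $m=0,1,r+1,r+2$ are $r$, $-(r+2)$, $r+2$ and $-r$, which match the numerator. I have checked the cases $r=1$ and $r=2$ directly.

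\emph{Step 2: expand $f_{\Phi_v,a}(1)$.} In $\sum_{i=0}^{[r/2]}(r-2i)(q^{2i}+q^{2i+1})$, the coefficient of $q^m$ is $r-2[m/2]$ for $0\le m\le r$. The possible extra term $q^{r+1}$, which occurs when $r$ is even, has coefficient $r-2\cdot\frac r2=0$. The $i=[r/2]$ term for $r$ odd produces exactly $q^{r-1}+q^r$ with coefficient $1$, consistent with $r-2[m/2]$.

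\emph{Step 3: subtract.} The coefficient of $q^m$ in the difference $f_{\Phi_v,a}(1)-2S_{a,1}$ is
$$\bigl(r-2[m/2]\bigr)-(r-2m)=2\bigl(m-[m/2]\bigr)=2\Bigl[\frac{m+1}{2}\Bigr].$$
This vanishes at $m=0$. Hence the difference is $2|d_v|^{3/2}(1-N_v^{-2})\sum_{i=1}^r[\frac{i+1}{2}]N_v^{-i}\log N_v$, which is $B_{a,v}(1)$ as claimed.

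\emph{Extension to general $g$.} For the last sentence of the statement, I define $B_{a,v}(g)$ as the difference between $f_{\Phi_v,a}(g)$ and $2S_{a,1}(g)$, with $S_{a,1}(g)$ the corresponding extra term of $W'_{a,v}(0,g,\Phi_v)-\frac12\log|a|_vW_{a,v}(0,g,\Phi_v)$. Both pieces transform under $r(g)$ in the same way, since both come from the same Weil-representation action on $\Phi_v$, so the definition makes sense for every $g\in\UU(F_v)$.

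The only real obstacle is bookkeeping in Step 2: handling the parity of $r$ and the top-degree terms correctly. Step 1 is the one nontrivial closed form, and I would verify it by the telescoping multiplication described there rather than by resumming.
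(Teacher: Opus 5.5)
Your proposal is correct and is the same direct verification the paper uses: the paper only asserts that $B_{a,v}(1)$ ``can be checked directly'' from Proposition~\ref{explicit f terms on curve inert case}, and your steps supply the details. Those steps are the expansion $2S_{a,1}=|d_v|^{3/2}(1-N_v^{-2})\sum_{m=0}^{r}(r-2m)N_v^{-m}\log N_v$ for $r\ge 0$, followed by the coefficient comparison $(r-2[m/2])-(r-2m)=2[\frac{m+1}{2}]$. The paper's shell-function remarks are recorded only for later use, and the extension to general $g$ is definitional in both treatments.
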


\begin{proof}
    We refer to \cite[Lemma 4.5]{Guo1} for the explicit expression of $S_{a,1}$, and \cite[Section 2.3]{Guo1} for the definition of pseudo-Eisenstein series. Clearly, the expression of $B_{a,v}(1)$ can be checked directly by the explicit result in Proposition \ref{explicit f terms on curve inert case}.

    For later use, write the corresponding function on the lattice shells as
    \begin{equation}\label{non-Schwartz function}
        \Phi_v^+(x)=\left\{
    \begin{array}{lr}
        0 & (x\notin\Lambda_v),\\
        2\big[\frac{i+1}{2}\big]\log N_v
        & (i\ge0,\ x\in p_v^i\Lambda_v\setminus p_v^{i+1}\Lambda_v).
    \end{array}
    \right.
    \end{equation}
    It is compactly supported and locally constant on $\Lambda_v\setminus\{0\}$, and it is not locally constant at $0$.  The scaling covariance
    $$
       W_{a,v}(0,1,1_{p_v^i\Lambda_v})
       =\alpha_{i,v}
       W_{p_v^{-2i}a,v}(0,1,1_{\Lambda_v}),
       \qquad \alpha_{i,v}\ne0,
    $$
    gives a triangular relation between the values on the lattice shells and the nonzero Whittaker coefficients.

    The formula itself is verified directly from the standard Whittaker
    coefficient
    $$
        W_{a,v}(0,1,1_{\Lambda_v})
        =|d_v|^{3/2}(1-N_v^{-2})\sum_{j=0}^{v(a)}N_v^{-j}
    $$
    and the preceding scaling relation.  The zero-th coefficient of the corresponding automorphic term is treated in Section~\ref{Contributions of singular pseudo-Eisenstein series}.

\end{proof}

\subsubsection*{Explicit local terms in general: inert case}
Now we compute $f_{\Phi_v,a}(1)$ for general $n$ at inert places. The idea is the same as the counterpart for split places. Similar to the explicit result in Proposition \ref{explicit f terms on curve inert case}, we will also find that at inert places $f_{\Phi_v,a}(1)$ does not match the corresponding term in \cite[Lemma 4.5]{Guo1}. Nonetheless, the difference between these two terms can also be computed explicitly, and is very similar to the difference in the case of $n=1$.

The following lemma is the counterpart of Lemma \ref{independence of n} at inert places.

\begin{lemma}\label{independence of n inert place}
Suppose $v$ is a finite place inert in $E/F$ and $y\in\Lambda$ is a split vector.  Then the normalized local quantity $\deg_L(X)D_0(y_v)$ depends only on the local type of $y$ and is independent of the ambient dimension $n$.
\end{lemma}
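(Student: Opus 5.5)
The argument mirrors the proof of Lemma~\ref{independence of n}, with the split-place Hodge length computation replaced by its inert counterpart from Proposition~\ref{explicit f terms on curve inert case}.

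The first step is to express the quantity as a local Hodge discrepancy. By Lemma~\ref{lemma of twisted Hodge bundle}, at the unramified place $v$ the identity \eqref{D property} holds exactly. Hence $\deg_L(X)D_0(y_v)$ equals the normalized $v$-local arithmetic degree of
$$
\mathcal Z(y)\cdot(\LL|_{\mathcal Z(y)}-\LL_y)\cdot\LL|_{\mathcal Z(y)}^{\,n-1}
$$
divided by $\deg_L Z(y)$. If the residue characteristic is odd, Lemma~\ref{relative Hodge compatibility lemma}(2) shows that $\Delta_{y,v}=\LL|_{\mathcal Z(y),v}-\LL_{y,v}$ is a constant vertical divisor $c\cdot[\mathcal Z(y)_{\bar v}]$ on each connected component. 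Intersecting a full special fibre with $\LL^{n-1}$ gives $\deg_L Z(y)\log N_v$, up to the fixed normalization of \cite[Section~2]{Guo2}. So the normalized quantity is $c\log N_v$, and $c$ is exactly the local Hodge multiplicity of one branch of the transition isogeny $\phi_y^{\mathrm{ref}}$. If the residue characteristic is $2$, I would instead take this normalized multiplicity as the definition, as Proposition~\ref{property of general twisted Hodge bundle}(2) allows.

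The second step is to show that this multiplicity depends only on the local type. By Lemma~\ref{algebraic lemma inert case} and its proof, the $U_v$-orbit of $y_v$ is determined by $v(q(y))$ and the type $s$. The twisted lattice $\Lambda_{y,v}$ is built from the orbit representative $\{y',y^\perp,ae_1,\dots,ae_{n-1}\}$ and is independent of the auxiliary choices, as remarked after Definition~\ref{sub-lattice definition}. So $\phi_y^{\mathrm{ref}}$ at $v$, and hence $c$, depends only on the orbit. I would also check that the multiplicity does not change when $y$ is replaced by its primitive vector, so that only the type $s$ matters.

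The third step is to remove the ambient dimension. In the inert normal form, $\Lambda_v=(\text{rank-two block containing }y)\perp\bigoplus_{i\ge 3}\mathcal O_{E_v}e_i$ with $q(e_i)=1$. On the self-dual summand the transition isogeny is multiplication by $a$. I would first compose with the homothety by $a^{-1}$, which does not change the Hodge bundle class since $v(\Nm(a))$ is even, so that the isogeny becomes the identity there. Lemma~\ref{relative Hodge compatibility lemma}(1),(3) then let me strip off the summands $e_3,\dots,e_{n+1}$ one at a time without changing the multiplicity. This reduces to a well-positioned unitary Shimura curve with the same type $s$, where Proposition~\ref{explicit f terms on curve inert case} gives the value $s\log N_v$, independent of $n$. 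For even $n$, the twisted bundle is defined by pullback from $\mathcal X^+$ via Definition~\ref{general twisted Hodge bundle}; at an inert place this is the ordinary Cartesian pullback of Lemma~\ref{pullback compatibility lemma}, so the same reduction applies.

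I expect the main obstacle to be this homothety normalization on the self-dual complement. One has to check that $\Lambda_{y,v}$, after rescaling, really is the orthogonal sum of a rescaled rank-two block and the untouched unimodular part, and that the rescaling by $a$ contributes no extra vertical degree that could depend on $n$. If this bookkeeping fails, I would fall back on the Bruhat--Tits test-curve argument used in Lemma~\ref{relative Hodge compatibility lemma}(2): restrict $\Delta_{y,v}$ to a lower-dimensional special cycle through the open stratum of each vertical component, and compare multiplicities directly.
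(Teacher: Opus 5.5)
Your argument follows the paper's proof. Via \eqref{D property} the quantity is the local Hodge multiplicity of one branch of the transition isogeny; Lemma~\ref{relative Hodge compatibility lemma}(3) strips off the self-dual summands, and the curve calculation gives $s\log N_v$, with places above $2$ handled through the normalized multiplicity. You are right that on the summands $ae_i$ the transition is multiplication by $a$ rather than the identity. However, the reason you give for the homothety fix is wrong: rescaling by $a^{-1}$ shifts the $v$-local Hodge length by $\tfrac12 v(q(y'))$, and this is harmless only because the shift is the same in every dimension. The fix is also unnecessary, since on the well-positioned test curves these unit-norm summands split off CM factors whose Lie algebras lie in the rank-$n$ part of $\Lie(\mathcal A)$, so the rank-one Hodge line only sees the rank-two block.
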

\begin{proof}
By \eqref{D property}, the local number is the normalized degree of the transition Hodge bundle.  Lemma~\ref{relative Hodge compatibility lemma}(3) shows that its local Hodge multiplicity is unchanged when orthogonal self-dual summands are added or removed.  Iterating reduces to the unitary Shimura-curve calculation, where a type-$s$ branch has multiplicity $s$.  The same argument applies at inert places above $2$ by using the relative Hodge filtration and the absolute--relative comparison of \cite[Sections~3--6]{LMZ}.
\end{proof}

The following proposition is the counterpart of Proposition \ref{Explicit f term} for inert places.

\begin{proposition}\label{Explicit f term inert}
    Suppose $v$ is inert in $E/F$, then $f_{\Phi_v,a}(1)\ne0$ only if $a\in\mathcal{O}_{F_v}$. For $r=v(a)$, denote by
    \begin{equation*}
        S_{a,n}=|d_v|^{n+\frac{1}{2}}\frac{1+N_v^{-(n+1)}}{2(1+N_v^{-n})^2} \big(r(1-(-1)^{r}N_v^{-(r+2)n})+(r+2)(N_v^{-n}-(-1)^{r}N_v^{-(r+1)n})\big)\log N_v
    \end{equation*}
    when $2|n$, and
    \begin{equation*}
    \begin{aligned}
        S_{a,n}=|d_v|^{n+\frac{1}{2}}\frac{1-N_v^{-(n+1)}}{2(1-N_v^{-n})^2}\cdot \big(r(1-N_v^{-(r+2)n})-(r+2)(N_v^{-n}-N_v^{-(r+1)n})\big)\log N_v.
    \end{aligned}
    \end{equation*}
    when $2\nmid n$. Then for $B_{a,v}(1)=f_{\Phi_v,a}(1)-2S_{a,n}$, when $r$ is even, we have
    \begin{equation*}
        B_{a,v}(1)=2|d_v|^{n+\frac{1}{2}}(1+(-1)^n N_v^{-(n+1)})(\sum_{i=1}^r (-1)^{(n+1)i}[\frac{i+1}{2}]\cdot N_v^{-ni} \log N_v).
    \end{equation*}
    This definition of $B_{a,v}(1)$ extends to $B_{a,v}(g)$ for $g\in\UU(F_v)$ in general.
    The even-valuation coefficients determine the corresponding local $B$-term modulo Schwartz functions. Its zero-th coefficient is determined in Section~\ref{Contributions of singular pseudo-Eisenstein series}.
\end{proposition}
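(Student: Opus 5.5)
The computation follows the split-place pattern of Proposition~\ref{Explicit f term}. First, $f_{\Phi_v,a}(1)$ is a sum over $U_v\backslash\Lambda_{v,a}$ of the local discrepancy $\deg_L(X)D_0(y_v)$ weighted by the volume ratio $\vol(U_v)/\vol(U_{y,v})$. Second, $2S_{a,n}$ is read off from \cite[Lemma~4.5]{Guo1}. The plan is to compute the first, subtract the second, and recognize the remainder.

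\emph{Step 1 (vanishing and the index set).} Since $\Phi_v=1_{\Lambda_v}$ (or its dual variant, following the choice in \cite[Section~4.1]{Guo1}), the sum is empty unless $a\in\mathcal O_{F_v}$. This gives the support claim. By Lemma~\ref{algebraic lemma inert case}, the orbits are indexed by the types $s\equiv r \pmod 2$ with $0\le s\le r$; write $s=r-2i$ with $0\le i\le[r/2]$. A type-$s$ vector lies in $p_v^{i}\Lambda_v\setminus p_v^{i+1}\Lambda_v$.

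\emph{Step 2 (local $D$-term).} By Lemma~\ref{independence of n inert place} together with Proposition~\ref{explicit f terms on curve inert case}, a vector of type $s$ has $\deg_L(X)D_0(y_v)=s\log N_v$, independently of $n$.

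\emph{Step 3 (volume ratio).} The ratio factors into two pieces: the orthogonal-complement stabilizer factor, and the gluing index, i.e.\ the number of branches. By the inert analogue of Corollary~\ref{lattice corollary split}, $\Lambda_v\cap V_{y}^\perp$ is $\mathcal O_{E_v}x^\perp\perp(\text{self-dual rank }n-1)$ with $v(q(x^\perp))=s$. Counting with the unitary group over the residue field $\mathbb F_{N_v^2}/\mathbb F_{N_v}$ replaces each $N_v^{m}$ of the split computation by $(-N_v)^{m}$ appropriately. The expected result is
\[
\frac{\vol(U_v)}{\vol(U_{y,v})}=|d_v|^{n+\frac12}\bigl(1-(-N_v)^{-(n+1)}\bigr)\,N_v^{n(s-1)}\bigl(N_v^n-(-1)^n\bigr)\qquad(s>0),
\]
which specializes for $n=1$ to $|d_v|^{3/2}(1-N_v^{-2})(N_v+1)N_v^{s-1}$. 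The type-$0$ term is killed by $D_0=0$.

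\emph{Step 4 (summation and subtraction).} Substituting into \eqref{f series}, with the factor $|a|_v^n=N_v^{-rn}$, gives
\[
f_{\Phi_v,a}(1)=|d_v|^{n+\frac12}\bigl(1-(-N_v)^{-(n+1)}\bigr)\sum_{i=0}^{[r/2]}(r-2i)\,N_v^{-2in}\bigl(1-(-1)^nN_v^{-n}\bigr)\log N_v.
\]
This is a finite arithmetic–geometric sum. Meanwhile $2S_{a,n}$ is the closed form of the corresponding full sum $\sum_{j}(r-j)(\pm N_v^{-n})^{j}$-type expression, which runs over all $j$ rather than only even ones, with the sign pattern depending on the parity of $n$. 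For $r$ even, the difference is a sum of odd-$j$ corrections. Regrouping the $[\,(i+1)/2\,]$ coefficients, exactly as in the $n=1$ check of Proposition~\ref{B series inert}, yields the displayed $B_{a,v}(1)$ with sign $(-1)^{(n+1)i}$ and prefactor $1+(-1)^nN_v^{-(n+1)}$. I would verify this identity by comparing generating functions in $X=N_v^{-n}$.

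\emph{Step 5 (extension to general $g$).} The extension to $B_{a,v}(g)$ is by the Weil representation, via the lattice-shell function $\Phi_v^+$ as in \eqref{non-Schwartz function}. The triangular scaling relation shows that even-valuation coefficients determine the term modulo Schwartz functions.

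The main obstacle is Step 3: getting the correct inert gluing index and unitary-group orders, with the right signs for both parities of $n$. I would check it first against $n=1$ (Proposition~\ref{explicit f terms on curve inert case}) and against the total-mass identity analogous to \eqref{decompose the summation index at inert place}.
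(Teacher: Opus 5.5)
This is correct and is essentially the paper's proof. You take the Hodge multiplicity $s\log N_v$ from the curve case and Lemma~\ref{independence of n inert place}, and the volume ratio as the orthogonal-complement factor times the gluing index $(N_v+1)N_v^{s-1}$; your ratio coincides with the paper's since $1-(-N_v)^{-(n+1)}=1+(-1)^nN_v^{-(n+1)}$. You then substitute into \eqref{f series} and subtract $2S_{a,n}$. One small fix to your heuristic for $2S_{a,n}$: with $Y=(-1)^{n+1}N_v^{-n}$ one has $2S_{a,n}=|d_v|^{n+\frac12}\bigl(1+(-1)^nN_v^{-(n+1)}\bigr)\sum_{j=0}^{r}(r-2j)Y^j\log N_v$, with weights $r-2j$ rather than $r-j$. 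Your $f_{\Phi_v,a}(1)$ puts weight $r-2i$ on both $Y^{2i}$ and $Y^{2i+1}$, so the difference has weight $j+1_{\{j\ \mathrm{odd}\}}=2[\frac{j+1}{2}]$ on $Y^j$, which is exactly the stated $B_{a,v}(1)$.
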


\begin{proof}
The parity assumption on $r$ is exactly the split-vector range in which the twisted-Hodge calculation is used.  By Proposition~\ref{explicit f terms on curve inert case} and Lemma~\ref{independence of n inert place}, a positive type-$s$ orbit contributes
$$
 \deg_L(X)D_0(y_v)=s\log N_v.
$$

We now separate the two parts of the stabilizer calculation.  The volume of the maximal compact group is
$$
 \vol(U_v)=|d_v|^{\frac{(n+1)^2}{2}}
 \prod_{i=1}^{n+1}(1-(-1)^iN_v^{-i}).
$$
For type $s>0$, the coordinate calculation on $\Lambda_v\cap V_y^\perp$ gives the transverse index
$$
 \frac{N_v^n-(-1)^n}{N_v+1}
 N_v^{(n-1)(s-1)}.
$$
The inherited stabilizer $U_{y,v}$ is smaller: it must also preserve the finite gluing with the line $E_vy$.  The gluing index is the number of branches $(N_v+1)N_v^{s-1}$.  Hence
$$
 \frac{\vol(U_v)}{\vol(U_{y,v})}
 =|d_v|^{n+\frac12}
  (1+(-1)^nN_v^{-(n+1)})
  N_v^{n(s-1)}(N_v^n-(-1)^n),
 \qquad s>0.
$$
For type $0$ one has
$$
 \frac{\vol(U_v)}{\vol(U_{y_0,v})}
 =|d_v|^{n+\frac12}(1+(-1)^nN_v^{-(n+1)}).
$$
Multiplying the local Hodge multiplicity by this volume ratio gives the stated formula for $f_{\Phi_v,a}(1)$ and hence the displayed formula for $B_{a,v}(1)$.

{\color{black}
The corresponding function on $\Lambda_v$ can be written explicitly as follows.  The
     standard coefficient is
     $$
         W_{a,v}(0,1,\Phi_v)
         =|d_v|^{n+\frac12}
          (1+(-1)^nN_v^{-(n+1)})
          \sum_{i=0}^r(-1)^{(n+1)i}N_v^{-ni}.
     $$
     Applying the same calculation with the characteristic functions $1_{p_v^i\Lambda_v}$ gives
     \begin{equation}\label{inert B local function}
       \Phi_{n,v}^{B}(x)=\left\{
       \begin{aligned}
       &0 &&(x\notin\Lambda_v),\\
       &2(-1)^{(n+1)i}
         \big[\frac{i+1}{2}\big]\log N_v
       &&(x\in p_v^i\Lambda_v\setminus p_v^{i+1}\Lambda_v).
       \end{aligned}
       \right.
     \end{equation}
     Scaling by $p_v^i$ changes the valuation of the Fourier index by $2i$, and the resulting triangular relation has nonzero diagonal terms. Thus the even-valuation coefficients determine the values in \eqref{inert B local function} modulo a Schwartz function. The zero-th coefficient is treated in Section~\ref{Contributions of singular pseudo-Eisenstein series}.
}
\end{proof}

\subsection{Explicit local computations at ramified places}\label{Explicit local computations at ramified places}
It remains to compute the expression of $f_{\Phi_v,a}(1)$ at ramified places. The main idea is the same as the one for inert places, i.e., we first consider the case when $n=1$, and then extend the result to general case. We will see that many of the previous discussions still hold in the ramified case. Therefore, to avoid repetitive discussions, we will omit some details when computing the case of ramified places. Nonetheless, there are still some essential difference, especially when $n$ is even. The main reason for this difference is that the choice of Hermitian lattice at the ramified place depends on the parity of $n$, or more directly, such difference is already reflected in Proposition \ref{property of general twisted Hodge bundle}.

Before starting the computations, it should be emphasized that in the computations at ramified places, it is more convenient to use definitions involving the dual lattice $\Lambda^\vee_v$. This was mentioned before Definition \ref{general twisted Hodge bundle}, and some related explanations can be found in \cite[Corollary 4.7]{Guo1}.

\subsubsection*{Explicit local terms on Shimura curve: ramified case}

The next proposition is the counterpart of Proposition \ref{explicit f term of unitary curve} and \ref{explicit f terms on curve inert case}.

\begin{proposition}\label{explicit f terms on curve ramified case}
    Keep all the notations above, assume $n=1$, suppose $v$ is ramified in $E/F$. Then $f_{\Phi_v,a}(1)\ne0$ only if $a\in\varpi_{F_v}\mathcal{O}_{F_v}$. We have
    \begin{equation*}
    \begin{aligned}
        f_{\Phi_v,a}(1)
        =(1-N_v^{-2})\cdot \big(\sum_{i=1}^{r}(r-i)N_v^{-i}\big)\log N_v.
    \end{aligned}
    \end{equation*}
    Here $r=v(a)$. Especially, if $y\in\Lambda_{v,a}$ has type $s$, we have
    \begin{equation*}
\deg_L(X)D_0(y_v)=s\log N_v.
    \end{equation*}
\end{proposition}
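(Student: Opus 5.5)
The proof will follow the same template as Propositions~\ref{explicit f term of unitary curve} and~\ref{explicit f terms on curve inert case}, and will treat separately the two factors in \eqref{f series}: the local Hodge multiplicity $\deg_L(X)D_0(y_v)$, and the volume ratio $\vol(U_v)/\vol(U_{y,v})$, which counts branches.

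\textbf{Step 1: index set and support.} By Lemma~\ref{algebraic lemma ramified} with $n=1$ (odd), the lattice $\Lambda_v\simeq H_\varpi$ is $\varpi_{E_v}$-modular. Every nonzero vector has norm of positive valuation, so $f_{\Phi_v,a}(1)=0$ unless $a\in\varpi_{F_v}\mathcal O_{F_v}$. The orbits of norm $a$ are indexed by types $s=0,\dots,r-1$, with $r=v(a)$.

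\textbf{Step 2: local Hodge multiplicity.} I want $\deg_L(X)D_0(y_v)=s\log N_v$ for $y$ of type $s$. By Lemma~\ref{lemma of twisted Hodge bundle}, which holds exactly for $n$ odd, this multiplicity is the normalized degree of $\LL|_{\mathcal Z(y)}-\LL_y$. At a ramified $\varpi_{E_v}$-modular place, the twisted lattice is $\Lambda_{y,v}=a\Lambda_v$ with $v(\Nm a)=v(q(y'))$. The transition isogeny is therefore multiplication by an element of $\mathcal O_{E_v}$. I would compute the cokernel on $\omega_{\mathcal A}$ by Zhang's calculation transported through \cite[Figure~2]{Guo2}, exactly as in the unramified cases. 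Corollary~\ref{algebraic corollary ramified} gives $v(q(e_x))=s+1$. I expect the normalization of \cite[Remark~3.8]{Guo2} to remove the contribution of the modular scale $\varpi_{E_v}$, leaving length $s$.

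\textbf{Step 3: volume ratio.} For $s>0$, the gluing of the line $E_vy$ to $\Lambda_v\cap V_y^\perp$ gives an index of $N_v^{s}$ branches. The reason is that a ramified quadratic residue field extension is trivial, so there is no $N_v\pm1$ factor. The modular normalization of the measure (\cite[Corollary~4.7]{Guo1}) should make the ratio $(1-N_v^{-2})N_v^{s}$, with no $|d_v|$ factor once $d_v$ is absorbed. I would check this by the decomposition of the Hecke degree, $\sum_{s}$ (branches) $=\sum_{i}N_v^{i}$, analogous to \eqref{decompose the summation index at inert place}.

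\textbf{Step 4: summation.} Substituting into \eqref{f series} with the factor $|a|_v=N_v^{-r}$ gives
\[
|a|_v\sum_{s}s\,(1-N_v^{-2})N_v^{s}\log N_v .
\]
Re-indexing by $i=r-s$ should produce $(1-N_v^{-2})\sum_{i=1}^{r}(r-i)N_v^{-i}\log N_v$. The exact offset between $s$ and the exponent is what I must match against the type convention $x\in\varpi_{E_v}^{r-s-1}\Lambda_v\setminus\varpi_{E_v}^{r-s}\Lambda_v$.

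\textbf{Main obstacle.} I expect the hardest point to be Step~2 together with the offset in Step~4: justifying that the ramified Hodge multiplicity is exactly $s$ and not $s+1$, given the shift in Corollary~\ref{algebraic corollary ramified}. The plan is to handle this through the modular-lattice normalization and the dual-lattice convention.
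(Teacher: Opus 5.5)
Your decomposition is the one the paper uses. The paper likewise takes the one-branch Hodge multiplicity to be $s\log N_v$, counts $N_v^s$ branches of type $s$ so that $\vol(U_v)/\vol(U_{y,v})=(1-N_v^{-2})N_v^s$, and substitutes into \eqref{f series}. Your indexing by the $r$ orbits of $\Lambda_{v,a}$ (types $s=0,\dots,r-1$), together with $|a|_v=N_v^{-r}$, is exactly what produces the displayed sum starting at $i=1$ and the support condition $a\in\varpi_{F_v}\mathcal O_{F_v}$. So Steps~1, 3 and~4 match the paper's computation.

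The step you flag as the main obstacle is a real issue, and the mechanism you propose for it in Step~2 would not resolve it. The normalization of \cite[Remark~3.8]{Guo2} is a fixed overall factor; in the paper it only supplies the factor $2\times\frac12=1$ from the arithmetic base and the Hodge-bundle normalization. It rescales every type uniformly, so it cannot turn a length $s+1$ into $s$.

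The shift is additive, and it is removed by the switch to $\Lambda_v^\vee=\varpi_{E_v}^{-1}\Lambda_v$ that the paper makes at the start of its proof. The unit-norm reference vector $y_0$ of Definition~\ref{y_0 vector} lives in $\Lambda_v^\vee$, and its transition is the identity by Proposition~\ref{property of general twisted Hodge bundle}(1). So primitivity must be measured in $\Lambda_v^\vee$. A type-$s$ vector $y\in\varpi_{E_v}^{r-s-1}\Lambda_v\setminus\varpi_{E_v}^{r-s}\Lambda_v$ equals $\varpi_{E_v}^{r-s}y''$ with $y''$ primitive in $\Lambda_v^\vee$ and $v(q(y''))=s$. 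Relative to the reference, the transition isogeny is therefore multiplication by an element of $E_v$-valuation $s$, which gives one-branch multiplicity $s$.

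Equivalently, $D_0$ is a difference against the type-$0$ reference. The single factor $\varpi_{E_v}$ responsible for $v(q(e_x))=s+1$ in Corollary~\ref{algebraic corollary ramified} is common to all types and cancels. With this replacing your appeal to Remark~3.8, the rest of your argument goes through as planned.
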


\begin{proof}
As in the inert case, it is more convenient to use the dual lattice $\Lambda_v^\vee$. See \cite[Figure~1]{Guo2}.  The local Hodge multiplicity of one branch is $s$, so
$$
 \deg_L(X)D_0(y_v)=s\log N_v.
$$
At a ramified place there are $N_v^s$ branches of type $s$. Equivalently, the inherited point stabilizer has the extra index $N_v^s$.  Thus
$$
 \frac{\vol(U_v)}{\vol(U_{y,v})}
 =(1-N_v^{-2})N_v^s.
$$
The factor $2\times\frac12=1$ coming from the arithmetic base and the Hodge-bundle normalization is the same as in \cite[(5.3.6)]{Guo2} and \cite[Remark~3.8]{Guo2}.  Substitution into \eqref{f series} gives
$$
 f_{\Phi_v,a}(1)
 =(1-N_v^{-2})\sum_{i=1}^{r}(r-i)N_v^{-i}\log N_v.
$$
The identity $\sum_{s=0}^rN_v^s=\sum_{i=0}^rN_v^i$ gives the corresponding count of the branches.
\end{proof}

To end up the discussion for ramified places when $n=1$, we have the following proposition which is an analogue of Proposition \ref{B series inert}.

\begin{proposition}\label{B series ramified}
    For $r=v(a)$, the difference between $f_{\Phi_v,a}(1)$ and the term
    \begin{equation*}
    \begin{aligned}
        2S_{a,1}=&\frac{1-N_v^{-2}}{(1-N_v^{-1})^2} \big(r(1-N_v^{-(r+2)})-(r+2)(N_v^{-1}-N_v^{-(r+1)})\big)\log N_v\\
        &-r(1-N_v^{-2})\log N_v+2N_v^{-2}\log N_v.
    \end{aligned}
    \end{equation*}
    is
    \begin{equation*}
        B_{a,v}(1)=(1-N_v^{-2})\sum_{i=1}^r i N_v^{-i} \log N_v-2N_v^{-2}\log N_v.
    \end{equation*}
    This definition of $B_{a,v}(1)$ extends to $B_{a,v}(g)$ for $g\in\UU(F_v)$ in general.
\end{proposition}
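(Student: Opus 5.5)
The plan is to treat this as a direct algebraic verification: take the closed form of $f_{\Phi_v,a}(1)$ from Proposition~\ref{explicit f terms on curve ramified case}, take the displayed expression for $2S_{a,1}$ (which comes from \cite[Lemma~4.5]{Guo1}), and check that their difference is the claimed $B_{a,v}(1)$. Write $x=N_v^{-1}$ and factor out $\log N_v$ throughout. Then Proposition~\ref{explicit f terms on curve ramified case} gives
$$
 f_{\Phi_v,a}(1)=(1-x^2)\sum_{i=1}^{r}(r-i)x^i\log N_v .
$$
So the claim is equivalent to
$$
 2S_{a,1}=f_{\Phi_v,a}(1)-B_{a,v}(1)
 =\Bigl((1-x^2)\sum_{i=1}^{r}(r-2i)x^i+2x^2\Bigr)\log N_v .
$$

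The key step is the polynomial identity
$$
 r(1-x^{r+2})-(r+2)(x-x^{r+1})=(1-x)^2\sum_{i=0}^{r}(r-2i)x^i .
$$
I would prove it by multiplying out the right-hand side. Write $\sum_{i=0}^r(r-2i)x^i=rP-2xP'$ with $P=\sum_{i=0}^r x^i=(1-x^{r+1})/(1-x)$, and simplify. Alternatively, use induction on $r$; the case $r=1$ reads $(1-x)^3=(1-x)^2(1-x)$. This is the same telescoping that underlies the split-place weighted sum in Proposition~\ref{explicit f term of unitary curve}.

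With this identity, the first term of $2S_{a,1}$ becomes
$$
 \frac{1-x^2}{(1-x)^2}\bigl(r(1-x^{r+2})-(r+2)(x-x^{r+1})\bigr)\log N_v
 =(1-x^2)\Bigl(r+\sum_{i=1}^{r}(r-2i)x^i\Bigr)\log N_v .
$$
The $i=0$ contribution $r(1-x^2)\log N_v$ cancels exactly against the correction $-r(1-N_v^{-2})\log N_v$ in $2S_{a,1}$. The remaining term $2N_v^{-2}\log N_v$ carries over unchanged. This gives the required expression for $2S_{a,1}$, and hence the displayed formula for $B_{a,v}(1)$.

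For the final sentence, the extension to $B_{a,v}(g)$ for $g\in\UU(F_v)$, I would follow the pattern of Proposition~\ref{B series inert}. Realize $B_{a,v}(1)$ as the $a$-th Whittaker coefficient of a function on $\Lambda_v^\vee$ that is constant on the shells $\varpi_{E_v}^i\Lambda_v^\vee\setminus\varpi_{E_v}^{i+1}\Lambda_v^\vee$, with values proportional to $i\log N_v$, plus a correction supported near the boundary orbit that produces the constant $-2N_v^{-2}\log N_v$. The scaling relation for the Whittaker functions of $1_{\varpi_{E_v}^i\Lambda_v^\vee}$ gives a triangular system with nonzero diagonal, which determines these shell values. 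One then defines $B_{a,v}(g)$ by replacing $1$ with $r(g)$ in the Weil representation.

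I expect the main obstacle to be this last part, namely pinning down the shell function and the constant term consistently with the normalization of \cite[(5.3.6)]{Guo2}. The identity for $B_{a,v}(1)$ itself is routine algebra.
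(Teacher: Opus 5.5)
Your argument is correct and takes the same approach as the paper. The paper also obtains $B_{a,v}(1)$ by direct comparison with the closed form of $f_{\Phi_v,a}(1)$ in Proposition~\ref{explicit f terms on curve ramified case}, and your identity $r(1-x^{r+2})-(r+2)(x-x^{r+1})=(1-x)^2\sum_{i=0}^{r}(r-2i)x^i$, followed by the cancellation of the $i=0$ term against $-r(1-N_v^{-2})\log N_v$, is exactly the check the paper leaves to the reader. For the extension to $g$, the paper simply exhibits the shell function $\Phi_v^+$ on $\Lambda_v$ with values $i\log N_v$; since $\varpi_{E_v}^i\Lambda_v=\varpi_{E_v}^{i+1}\Lambda_v^\vee$ here, this differs from your shell function on $\Lambda_v^\vee$ with its boundary correction only by a Schwartz function, so the two descriptions agree modulo ordinary Whittaker families.
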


\begin{proof}
    The expression of $B_{a,v}(1)$ can be checked easily by the explicit result in Proposition \ref{explicit f terms on curve ramified case}. Compared with the split case and inert case, the extra term
    \begin{equation*}
        -r(1-N_v^{-2})\log N_v+2N_v^{-2}\log N_v
    \end{equation*}
    in $2S_{a,v}$ comes from the choice of Schwartz function, i.e., as we explained above, this extra term disappears if we consider $\Phi_v^\vee$ instead.

    The corresponding function on the lattice shells is
    \begin{equation}\label{non-Schwartz function ramified}
        \Phi_v^+(x)=\left\{
    \begin{array}{lr}
        0 &(x\notin\Lambda_v),\\
        i\log N_v &(i\ge0,\ x\in
          \varpi_{E_v}^i\Lambda_v\setminus
          \varpi_{E_v}^{i+1}\Lambda_v).
    \end{array}
    \right.
    \end{equation}
    A direct scaling calculation shows that its nonzero Whittaker coefficients are $B_{a,v}(1)$. The function is not locally constant at the origin, so it is not a Schwartz function.

\end{proof}

In conclusion, when $n=1$, the computation of $f_{\Phi_v,a}(1)$ at ramified places is extremely similar to the computation at inert places. Especially, although the explicit expression of $B_{a,v}(1)$ in Proposition \ref{B series ramified} is slightly different with the one in Proposition \ref{B series inert}, their form and nature are entirely similar. The function $\Phi_v^+$ in \eqref{non-Schwartz function ramified} gives a convenient way to write these nonzero Whittaker coefficients. It is not a Schwartz function because it is not locally constant at the origin. The non-Schwartz function $\Phi_v^+$ in both cases can be considered as a variant of the \textit{truncated logarithmic function} $\log^+_v$, where
\begin{equation*}
    \log^+_v(x)=\left\{
    \begin{aligned}
        \nonumber
        &0 \ \ \  (x\notin\Lambda_v);\\
        &i \ \ \ (i\ge0,\ x\in \varpi_{E_v}^i\Lambda_v,\ x\notin \varpi_{E_v}^{i+1}\Lambda_v).
    \end{aligned}
    \right.
\end{equation*}
We will use only the explicit Whittaker coefficients above. A related local function appears in the calculations of \cite[Sections~8.2 and~8.4]{BH}.

\subsubsection*{Explicit local terms in general: ramified case}
Now we finish the computation of $f_{\Phi_v,a}(1)$ for general $n$ at ramified places. The idea is the same as the counterpart for split and inert places, i.e., we first confirm that the major part $\deg_L(X)D_0(y_v)$ of $f_{\Phi_v,a}(1)$ does not depend on $n$, and then compute the volume of $U_v$ and $U_{y,v}$ to get the explicit expression. Overall, apart from some specific computations, our proof is consistent with the inert case.

In the rest of this subsection, similar to the case of $n=1$ in Proposition \ref{explicit f terms on curve ramified case}, we will always use the dual lattice $\Lambda_v^\vee$ in specific computations instead of the original lattice $\Lambda_v$. In other words, we are actually computing $f_{\Phi_v^\vee,a}(1)$.

\begin{lemma}\label{independence of n ramified place}
Suppose $v$ is a finite place ramified in $E/F$.
\begin{enumerate}
\item If $2\nmid n$, then the normalized local quantity $\deg_L(X)D_0(y_v)$ for $y_v\in\Lambda_v^\vee$ is independent of the ambient dimension.
\item If $2\mid n$ and
$$
 q(y_v)\in\Nm(E_v^\times)(-1)^{n/2}d_\BV,
$$
then the same reduction holds for every orbit except one type-$0$ orbit.  On this exceptional orbit we write
$$
 \deg_L(X)D_0(y_v)=c_{n,v}\log N_v,
$$
where $c_{n,v}$ depends only on the fixed local datum and $n$. After fixing $v$, we write $c_n$.
\item If $2\mid n$ and $q(y_v)$ lies in the opposite norm class, then
$$
 \deg_L(X)D_0(y_v)+c_n\log N_v
$$
is independent of the ambient dimension and reduces to the curve calculation.
\end{enumerate}
\end{lemma}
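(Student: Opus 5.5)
I will model the argument on Lemmas~\ref{independence of n} and \ref{independence of n inert place}. First I express the local quantity through \eqref{D property} of Lemma~\ref{lemma of twisted Hodge bundle}, namely as the normalized local degree of $\LL|_{\mathcal Z(y)}-\LL_y$ at $v$. Then I remove orthogonal summands of the lattice until only the curve case of Proposition~\ref{explicit f terms on curve ramified case} remains. The orbit structure I need comes from Lemma~\ref{algebraic lemma ramified} and Corollary~\ref{algebraic corollary ramified}, applied to $\Lambda_v^\vee$. With one exception, each orbit of $\Lambda^\vee_{v,a}$ has a representative lying in the first Jordan block $\varpi_{E_v}^{-1}H_\varpi$. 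The remaining Jordan blocks are then an orthogonal summand on which the transition isogeny of Proposition~\ref{common refinement isogeny proposition} acts as the identity.

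For (1), with $n$ odd, $\Lambda_v^\vee$ is a $\varpi_{E_v}^{-1}$-homothety of $H_\varpi^{\perp(n+1)/2}$. By the rank-two reduction every orbit meets a single hyperbolic plane. I then repeat the argument of Lemma~\ref{relative Hodge compatibility lemma}(3) with the hyperbolic summands $\varpi_{E_v}^{-1}H_\varpi$ in place of self-dual summands. The polarized multichain and the LMZ Hodge line \eqref{twisted Hodge bundle exact definition} behave functorially under orthogonal sums, and the isogeny is trivial on the added summand. So the cokernel on $\omega_{\mathcal A_x,v}$, and with it the local Hodge multiplicity, does not change. Removing one hyperbolic plane changes $n$ by two, which preserves the parity. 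This reduces the problem to $n=1$, where Proposition~\ref{explicit f terms on curve ramified case} gives multiplicity $s$.

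For (2) and (3), with $n$ even, the bundle $\LL_y$ is defined in Definition~\ref{general twisted Hodge bundle} by pulling back along $\delta^{\epsilon_v}_{\mathcal N,v}$ from the odd space $\mathcal N_{m,v}$ to $\LL^+_{\varpi_{E_v}y}$. Lemma~\ref{pullback compatibility lemma} then moves the computation to the odd-dimensional ambient space with vector $\varpi_{E_v}y$, where part (1) applies. Corollary~\ref{algebraic corollary ramified} shows that this identification respects orbit types in every orbit coming from the hyperbolic blocks. The one exception is the unit-depth orbit of $e$, which exists exactly in the norm class $(-1)^{n/2}d_\BV\Nm(E_v^\times)$. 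On that orbit the lower lattice sits in $\Lambda_v^{+,\epsilon_v}$ differently, depending on the branch $\epsilon_v$ (\cite[(12.4), Remark 12.3]{RSZ1}). The difference is a vertical divisor supported in the fibre at $v$. Its normalized degree is an algebraic multiple of $\log N_v$ by Proposition~\ref{property of general twisted Hodge bundle}(2), and I name it $c_{n,v}\log N_v$. It depends only on the fixed local datum, and this proves (2). For the opposite norm class, all orbits come from hyperbolic blocks. Comparing with the fixed branch, the only difference from the odd computation is this same branch-to-branch vertical degree, which enters with the opposite sign. Adding $c_n\log N_v$ therefore gives a quantity that reduces to the curve case, which is (3).

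I expect the main obstacle to be the even-dimensional step. There I must check that $\delta^{\epsilon_v}_{\mathcal N,v}$ carries the Hodge line $\LL^+$ to the lower Hodge line up to exactly one constant vertical divisor, and that this divisor does not depend on the type $s$. My first attempt will go through the universal degree-$N_v$ isogeny on $(\mathcal P'_{m,v})^{\epsilon_v}$. Its effect on $\omega$ is a fixed length, independent of $y$, and I will verify this using \cite[Theorem~9.3]{RSZ1}.
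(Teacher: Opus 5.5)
Your odd-$n$ argument is the paper's: rank-two reduction in the $\varpi_{E_v}$-modular lattice, fixed modular summands, descent to $n=1$. The even case has a genuine gap, because the exceptional orbit is misidentified.

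\textbf{Which orbit is exceptional.} You attach $c_{n,v}$ to the line orbit of $e$ and let every orbit represented in $\varpi_{E_v}^{-1}H_\varpi$ reduce to the curve. This is backwards.
\begin{itemize}
\item One has $\Lambda_v^\vee\cap e^\perp=\varpi_{E_v}^{-1}H_\varpi^{\perp n/2}$, equivalently $\Lambda_v\cap e^\perp=H_\varpi^{\perp n/2}$ by Corollary~\ref{algebraic corollary ramified}(2). This is the standard lattice of the next lower dimension, so the line orbit is the orbit of the standard vector $y_{0,v}$ of Definition~\ref{y_0 vector}.
\item On that orbit the transition is the identity and $\LL_{y_0}=\LL|_{\mathcal Z(y_0)}$ by Proposition~\ref{property of general twisted Hodge bundle}(1). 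So the constant you name there is $0$.
\item The orbit that must be isolated is the \emph{other} type-$0$ orbit of the matching class, the boundary orbit in $\varpi_{E_v}^{-1}H_\varpi$. Its complement in $\Lambda_v^\vee$ is $\mathcal O_{E_v}e\perp\mathcal O_{E_v}e''\perp\varpi_{E_v}^{-1}H_\varpi^{\perp(n-2)/2}$ with $q(e'')$ a unit. This has the other maximal type of \cite[(12.4) and Remark~12.3]{RSZ1}; compare the last sentence of Corollary~\ref{algebraic corollary ramified}.
\end{itemize}

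\textbf{Your mechanism cannot see this orbit.} Take representatives $e,w$ of the two type-$0$ orbits with $q(e)=q(w)$. Then $\varpi_{E_v}e$ and $\varpi_{E_v}w$ are primitive of equal norm in the $\varpi_{E_v}$-modular lattice $\Lambda_v^{+,\epsilon_v}$. By the rank-two reduction they lie in one upper orbit, so part (1) applied upstairs gives them the same value. Likewise, the fixed length you plan to extract from the degree-$N_v$ isogeny on $(\mathcal P'_{m,v})^{\epsilon_v}$ is independent of $y$. It occurs equally for $y_0$ and cancels in $D_0$, so it cannot produce an orbit-dependent constant. Carried through, your argument forces $c_{n,v}=0$, which contradicts $c_{n,v}=-2N_v^{-n}$ in Lemma~\ref{c_n lemma}.

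\textbf{Part (3) has no mechanism.} In the opposite class every orbit comes from a hyperbolic block, which by your own criterion reduces exactly. Yet you assert a uniform shift by $-c_n\log N_v$ without saying where it comes from.

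\textbf{The missing idea.} Sort orbits by the maximal-lattice branch carrying their orthogonal complement, not by ``line versus hyperbolic block''.
\begin{itemize}
\item In the matching class, every orbit except the alternate type-$0$ one lies on the branch of $y_0$. Its difference from $y_0$ is then a sum of Zhang's elementary transition lengths and reduces to the curve.
\item The alternate type-$0$ orbit differs from $y_0$ by the branch-to-branch Hodge multiplicity. This is $U_v$-invariant, hence a multiple $c_{n,v}\log N_v$.
\item In the opposite class every orbit lies on the alternate branch. The paper picks a type-$0$ reference $y_1$ there and writes $D_0(y)=(D_0(y)-D_0(y_1))+D_0(y_1)$. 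The first term is a same-branch difference given by the curve calculation, and the second is the branch constant $-c_{n,v}\log N_v/\deg_L(X)$. That decomposition is what yields (3).
\end{itemize}
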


\begin{proof}
When $n$ is odd, the ramified lattice is $\varpi_{E_v}$-modular.  The transition isogeny splits into the same elementary rank-two changes as on the curve, while the added modular summands are fixed.  Hence the local Hodge multiplicity of one branch is unchanged.

Assume now that $n$ is even.  We use the fixed branch $\epsilon_v$ of Definition~\ref{general twisted Hodge bundle}.  The lower almost $\varpi_{E_v}$-modular space maps to the higher $\varpi_{E_v}$-modular space through
$$
 \mathcal N_{m-1,v}\longrightarrow\mathcal P_{m,v}
 \xrightarrow{\sim}(\mathcal P'_{m,v})^{\epsilon_v}
 \longrightarrow\mathcal N_{m,v},\qquad m=n+2.
$$
The local twisted line is pulled back along the $\mathcal Y$-cycle relation of Lemma~\ref{pullback compatibility lemma}.  For every orbit except one type-$0$ orbit, the two vectors are on the same maximal-lattice branch.  Their difference is therefore a sum of the same elementary transition-isogeny lengths as in Zhang's local calculation, so it reduces to the curve value.

The remaining type-$0$ orbit has the other maximal orthogonal-complement lattice type described in \cite[(12.4) and Remark~12.3]{RSZ1}.  The remaining difference is the difference of the Hodge multiplicities on the two maximal branches. By $U_v$-invariance it is a constant multiple of $\log N_v$, and we denote this coefficient by $c_{n,v}$.  For the opposite norm class choose a type-$0$ reference vector $y_1$ on the alternate branch and write
$$
 D_0(y)=\bigl(D_0(y)-D_0(y_1)\bigr)+D_0(y_1).
$$
The first difference is on one branch and is given by the curve calculation. The second is $-c_{n,v}\log N_v/\deg_L(X)$ in the present normalization. This proves (2)--(3). The constant $c_{n,v}$ is determined later from the modularity of the ramified $B$-term.
\end{proof}

The following proposition is the counterpart of Proposition \ref{Explicit f term} and \ref{Explicit f term inert} for ramified places.

\begin{proposition}\label{Explicit f term ramified}
    Suppose $v$ is ramified in $E/F$, then $f_{\Phi^\vee_v,a}(1)\ne0$ only if $a\in\mathcal{O}_{F_v}$. Moreover, for $r=v(a)$, denote by
    \begin{equation*}
    \begin{aligned}
        2S^\vee_{a,n}=\frac{1-N_v^{-(n+1)}}{(1-N_v^{-n})^2}\big(r-(r+2)N_v^{-n}+(r+2)N_v^{-(r+1)n}-rN_v^{-(r+2)n}\big)\log N_v
    \end{aligned}
    \end{equation*}
    when $2\nmid n$, and
    \begin{equation*}
        2S^\vee_{a,n}=\left\{
    \begin{aligned}
        \nonumber
        &\big(-(r+2)N_v^{-(r+1)n-\frac{1}{2}}+rN_v^{-\frac{1}{2}}\big)\log N_v \ \ \ a\in\Nm(E_v^\times)\cdot(-1)^\frac{n}{2}d_\BV;\\
        &\big((r+2)N_v^{-(r+1)n-\frac{1}{2}}+rN_v^{-\frac{1}{2}}\big)\log N_v \ \ \ a\notin\Nm(E_v^\times)\cdot(-1)^\frac{n}{2}d_\BV
    \end{aligned}
        \right.
    \end{equation*}
    when $2|n$. Let $f_{\Phi^\vee_v,a}(1)-2S^\vee_{a,n}=B^\vee_{a,v}(1)$, and this definition of $B^\vee_{a,v}(1)$ extends to $B^\vee_{a,v}(g)$ for $g\in\UU(F_v)$ in general.
    \begin{enumerate}
        \item If $2\nmid n$, then
        \begin{equation*}
            B_{a,v}^\vee(1)=(1-N_v^{-(n+1)})\sum_{i=1}^r i N_v^{-ni} \log N_v.
        \end{equation*}

        \item If $2|n$, we keep the type-$0$ constant $c_n$ unevaluated in this section. If
        $$
          a\in\Nm(E_v^\times)(-1)^\frac{n}{2}d_\BV,
        $$
        then
        \begin{equation*}
        \begin{aligned}
          B^\vee_{a,v}(1)=N_v^{-\frac12}\Big((r+2)N_v^{-n(r+1)}
          +N_v^{-nr}c_n-\sum_{i=1}^rN_v^{-ni}\Big)\log N_v.
        \end{aligned}
        \end{equation*}
        For the opposite norm class,
        \begin{equation*}
        \begin{aligned}
          B^\vee_{a,v}(1)=N_v^{-\frac12}\Big(&-(r+2)N_v^{-n(r+1)}
          -c_n\sum_{i=0}^rN_v^{-ni}\\
          &-\sum_{i=1}^rN_v^{-ni}\Big)\log N_v.
        \end{aligned}
        \end{equation*}

    \end{enumerate}
    Moreover, these coefficients form the exceptional ramified $B$-family.
\end{proposition}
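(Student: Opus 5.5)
The plan follows the inert-place argument (Proposition~\ref{Explicit f term inert}) with $\Lambda_v^\vee$ in place of $\Lambda_v$. Write $f_{\Phi_v^\vee,a}(1)$ from \eqref{f series} as a finite sum over the orbit set $U_v\backslash\Lambda^\vee_{v,a}$, whose size is given by Lemma~\ref{algebraic lemma ramified}. Each orbit contributes the product of a local Hodge multiplicity $\deg_L(X)D_0(y_v)$ and a volume ratio $\vol(U_v)/\vol(U_{y,v})$. Since $\Phi_v^\vee$ is supported on $\Lambda_v^\vee$, which has integral norms, $f_{\Phi^\vee_v,a}(1)=0$ unless $a\in\mathcal O_{F_v}$.

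\emph{Multiplicities.} For $n$ odd, Lemma~\ref{independence of n ramified place}(1) reduces the multiplicity to the curve value $s\log N_v$ from Proposition~\ref{explicit f terms on curve ramified case}, for type $s=0,\dots,r$ in the dual lattice. For $n$ even, Lemma~\ref{independence of n ramified place}(2),(3) gives two cases. In the matching norm class, the multiplicity is the curve value on all orbits except the extra type-$0$ line orbit, which contributes $c_n\log N_v$. In the opposite class, every orbit contributes its curve value minus $c_n\log N_v$.

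\emph{Volumes.} The ratio $\vol(U_v)/\vol(U_{y,v})$ factors as the index of the orthogonal-complement stabilizer, described via Corollary~\ref{algebraic corollary ramified}, times the gluing index with the line $E_vy$. On the curve the gluing index is $N_v^s$, as in Proposition~\ref{explicit f terms on curve ramified case}. In general I expect the ratio to be $(1-N_v^{-(n+1)})N_v^{ns}$ for $n$ odd, since the modular complement has rank $n-1$ and contributes $N_v^{(n-1)s}$. For $n$ even, the complement carries the almost-modular or modular factor, which I expect to produce $N_v^{-1/2}$ together with the sign $\pm$ depending on whether $a$ lies in $\epsilon(\Lambda_v)\Nm(E_v^\times)$.

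\emph{Assembly.} Multiply by $|a|_v^n=N_v^{-nr}$ and reindex with $i=r-s$. For $n$ odd this gives
$$
f_{\Phi_v^\vee,a}(1)=(1-N_v^{-(n+1)})\sum_{i}(r-i)N_v^{-ni}\log N_v .
$$
Next, subtract $2S^\vee_{a,n}$, taken from the Whittaker derivative in \cite[Lemma~4.5]{Guo1}, and simplify the geometric sums. This should leave $(1-N_v^{-(n+1)})\sum_i iN_v^{-ni}\log N_v$. Here I would use the identity
$$
\frac{r-(r+2)x+(r+2)x^{r+1}-rx^{r+2}}{(1-x)^2}=\sum_{i=0}^{r}(r-2i)x^i ,
$$
with $x=N_v^{-n}$.

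For $n$ even, carry out the same summation with the unevaluated constant $c_n$. In the matching class, $c_n$ enters only through the one exceptional type-$0$ orbit, with weight $N_v^{-nr}$. In the opposite class, $-c_n$ enters through every orbit, giving $-c_n\sum_{i=0}^rN_v^{-ni}$. Collecting terms gives the two displayed formulas for $B^\vee_{a,v}(1)$.

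The extension of $B^\vee_{a,v}$ to general $g$ follows as in Proposition~\ref{B series ramified}: realize the coefficients as Whittaker coefficients of a shell function, modelled on \eqref{non-Schwartz function ramified}, using the scaling triangularity. This also shows the coefficients form the exceptional ramified $B$-family.

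\emph{Main obstacle.} I expect the hardest step to be the even-$n$ volume ratio, in particular the half-integral power $N_v^{-1/2}$ and the sign pattern between the two norm classes. I would first verify these on rank $3$ using the explicit Jordan forms $\mathcal O_{E_v}e\perp H_\varpi$, and then propagate to general $n$ by the same self-dual-summand argument used in the multiplicity step.
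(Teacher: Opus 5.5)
\textbf{Odd $n$.} Here your route is the paper's. The multiplicity $s\log N_v$ comes from the curve, and the ratio $(1-N_v^{-(n+1)})N_v^{ns}$ is the complement factor $N_v^{(n-1)s}$ times the gluing index $N_v^s$. Your identity $\frac{r-(r+2)x+(r+2)x^{r+1}-rx^{r+2}}{(1-x)^2}=\sum_{i=0}^r(r-2i)x^i$ is exactly the elementary computation the paper leaves implicit.

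\textbf{Even $n$: the volume ratio is missing.} You never determine the volume ratio, and your guess has the wrong shape. The norm class does not enter the volumes as a sign. It enters in two other ways. First, through which orbits exist: $r+2$ versus $r+1$ orbits by Lemma~\ref{algebraic lemma ramified}. Second, through the multiplicities: $c_n$ on one orbit versus $-c_n$ on all of them. The $\mp(r+2)N_v^{-(r+1)n-\frac12}$ in $2S^\vee_{a,n}$ comes from the Whittaker side.

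The paper uses $\vol(U_v)/\vol(U_{y_0,v})=N_v^{-1/2}$ on the standard type-$0$ (self-dual line) orbit. On the hyperbolic branch it uses $\vol(U_{y_0,v})/\vol(U_{y,v})=N_v^{ns}(1-N_v^{-n})$. There the complement contributes $N_v^{(n-1)s}(1-N_v^{-n})$, and the gluing contributes $N_v^s$ for $s>0$ and $1$ for $s=0$.

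The factor $1-N_v^{-n}$ is what makes the assembly work. It gives $(1-N_v^{-n})\sum_{s=0}^r sN_v^{-n(r-s)}=r-\sum_{i=1}^rN_v^{-ni}$, and this $r$ cancels the $rN_v^{-1/2}$ in $2S^\vee_{a,n}$. With only $\pm N_v^{-1/2}$ you would get $\pm N_v^{-1/2}\sum_{i=0}^r(r-i)N_v^{-ni}\log N_v$, and the displayed $B^\vee_{a,v}(1)$ does not result.

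Your fallback, checking rank $3$ and propagating by self-dual summands, cannot supply this either. The ratios genuinely depend on $n$, through $N_v^{ns}$ and $1-N_v^{-n}$. Also, at a ramified place the block added in passing from $n$ to $n+2$ is a $\varpi_{E_v}$-modular plane $H_\varpi$, not a self-dual one. So the stabilizer index has to be computed directly for each $n$.

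\textbf{Even $n$: the opposite-class $c_n$ term.} Once you have these weights, your opposite-class sentence must be checked, not asserted. A uniform shift $-c_n\log N_v$ on the $r+1$ hyperbolic orbits, with weights $N_v^{-1/2}(1-N_v^{-n})N_v^{-n(r-s)}$, sums to $-c_nN_v^{-1/2}(1-N_v^{-n(r+1)})\log N_v$. It does not sum to $-c_nN_v^{-1/2}\sum_{i=0}^rN_v^{-ni}\log N_v$. The latter requires dropping $1-N_v^{-n}$, which would spoil the $s$-part.

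The paper states the displayed opposite-class formula as an ``elementary computation'' from this same uniform ratio and does not show it. With its own stated inputs, the $c_n$-part comes out in the other form. So this bookkeeping has to be done explicitly; neither your proposal nor the paper's write-up settles it. In the matching class, your placement of $c_n$ on the line orbit, with ratio $N_v^{-1/2}$ and weight $N_v^{-nr}$, does reproduce the paper's formula.
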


\begin{proof}
    Note that the definition of $S^\vee_{a,n}$ comes from the extra term in \cite[Lem 4.5, Cor 4.7]{Guo1} as those previous cases. As we mentioned at the beginning of this subsection, our specific computations here are all based on the dual lattice $\Lambda_v^\vee$. It can be observed that compared with Proposition \ref{B series ramified}, such computations yield more concise results. Of course, we can also mimic the subsequent specific computations to provide the computation results for the original lattice $\Lambda_v$.

In both parity cases, the local Hodge multiplicity and the number of branches have already been computed separately. It remains to compute the stabilizer $U_{y,v}$.

If $n$ is odd, for the type-$0$ reference vector one has
$$
 \frac{\vol(U_v)}{\vol(U_{y_0,v})}=1-N_v^{-(n+1)}.
$$
For a type-$s$ vector the coordinate calculation for the stabilizer of the orthogonal-complement lattice gives the factor $N_v^{(n-1)s}$, while preserving the gluing with the line contributes the additional index $N_v^s$. Hence
$$
 \frac{\vol(U_{y_0,v})}{\vol(U_{y,v})}=N_v^{ns}.
$$
The orbit volumes satisfy
$$
 \frac{\vol(\mathrm{Orbit}\,y_0)}{\vol(\mathrm{Orbit}\,y)}
 =\frac{\vol(U_{y,v})}{\vol(U_{y_0,v})}.
$$

If $n$ is even, the standard type-$0$ branch satisfies
$$
 \frac{\vol(U_v)}{\vol(U_{y_0,v})}=N_v^{-1/2}.
$$
For $y$ on the other branch (including the alternate type-$0$ orbit), the coordinate computation gives the factor
$$
 N_v^{(n-1)s}(1-N_v^{-n}).
$$
For $s>0$ the inherited point stabilizer has in addition the index $N_v^s$. For $s=0$ this index is $1$.  Thus uniformly on this branch
\begin{equation}\label{quotient of volume at ramified place}
 \frac{\vol(U_{y_0,v})}{\vol(U_{y,v})}
 =N_v^{ns}(1-N_v^{-n}).
\end{equation}
At $s=0$, the factor $1-N_v^{-n}$ is the relative size of the two type-$0$ orbits. Thus the terms with $s>0$ are explicit. We denote the remaining type-$0$ constant by $c_n$.

    Now we substitute the above result to obtain the expression for $f_{\Phi_v^\vee,a}(1)$. According to Lemma \ref{independence of n ramified place}, we need to divide the discussion into two cases based on the class of $q(y_v)$ in $F_v^\times/\Nm(E_v^\times)$. Using the explicit results in Lemma \ref{independence of n ramified place} and \ref{quotient of volume at ramified place}, an elementary computation implies that
    \begin{equation*}
            f_{\Phi_v^\vee,a}(1)=\left\{
            \begin{aligned}
            \nonumber
            &N_v^{-\frac{1}{2}}\big(r+N_v^{-nr}c_n-\sum_{i=1}^rN_v^{-ni}\big)\log N_v \ \ \ a\in\Nm(E_v^\times)\cdot(-1)^\frac{n}{2}d_\BV;\\
            &N_v^{-\frac{1}{2}}\big(r-\sum_{i=0}^rN_v^{-ni}c_n-\sum_{i=1}^rN_v^{-ni}\big)\log N_v \ \ \ a\notin\Nm(E_v^\times)\cdot(-1)^\frac{n}{2}d_\BV.
            \end{aligned}
            \right.
    \end{equation*}
    Hence, it is also not hard to check that for $a\in\Nm(E_v^\times)\cdot(-1)^\frac{n}{2}d_\BV$,
    \begin{equation*}
        B^\vee_{a,v}(1)=N_v^{-\frac{1}{2}}\big((r+2)N_v^{-n(r+1)}+N_v^{-nr}c_n-\sum_{i=1}^rN_v^{-ni}\big)\log N_v,
    \end{equation*}
    while for $a\notin\Nm(E_v^\times)\cdot(-1)^\frac{n}{2}d_\BV$,
    \begin{equation*}
        B^\vee_{a,v}(1)=N_v^{-\frac{1}{2}}\big(-(r+2)N_v^{-n(r+1)}-\sum_{i=0}^rN_v^{-ni}c_n-\sum_{i=1}^rN_v^{-ni}\big)\log N_v.
    \end{equation*}
The value of $c_n$ is determined in Section~\ref{Contributions of singular pseudo-Eisenstein series} from modularity and the two-norm stabilization. In fact, for $n$ even, the coefficients satisfy the formal difference relation
    \begin{equation}\label{behavior of B}
        B^\vee_{\Nm(\varpi_{E_v})a,v}(1)-B^\vee_{a,v}(1)
        =(v(a)+1)\big(
        W_{\Nm(\varpi_{E_v})a,v}(0,1,\Phi_v^\vee)
        -W_{a,v}(0,1,\Phi_v^\vee)\big)\log N_v.
    \end{equation}
Formula \eqref{behavior of B} determines these nonzero coefficients up to the type-$0$ constant $c_n$. In Section~\ref{Contributions of singular pseudo-Eisenstein series}, the two-norm stabilization of the completed local term determines $c_n$ and its zero-th coefficient.

Finally, when $n$ is even, the same values of $B_{a,v}(1)$ are obtained by using the original lattice $\Lambda_v$. Indeed, by \cite[Lemma~4.5]{Guo1}, the coefficient of $W_{a,v}(0,1,\Phi_v)$
    \begin{equation*}
        \frac{L'(n+1,\eta_v^{n+1})}{L(n+1,\eta_v^{n+1})}
    \end{equation*}
    in $W'_{a,v}(0,1,\Phi_v)-\frac{1}{2}\log|a|_vW_{a,v}(0,1,\Phi_v)$ is 0 when $n$ is even, thus avoiding the appearance of any additional constants.
\end{proof}

\section{Comparison of two series}\label{Comparison of two series}
In this section, we will combine all results for derivative series and height series to prove our main result. We will focus on the series
\begin{equation*}
    \mathcal{D}(g,\Phi)=\Pr I'(0,g,\Phi)+\big(\wh{\mathcal{Z}}_{*}(g,\Phi)-\frac{\deg_L(Z_*(g,\Phi))}{\deg_L(X)}\LL\big)\cdot(\mathcal{P}-\hat{\xi}),
\end{equation*}
which can be written as
\begin{equation}\label{Difference series}
    \begin{aligned}
        \mathcal{D}(g,\Phi)=&\Pr' I'(0,g,\Phi)+\wh{\mathcal{Z}}_{*}(g,\Phi)\cdot\mathcal{P}\\
        &-\Pr'\mathcal{J}'(0,g,\Phi)-\wh{\mathcal{Z}}_{*}(g,\Phi)\cdot\hat{\xi}\\
        &-\frac{\deg_L(Z_*(g,\Phi))}{\deg_L(X)}\LL\cdot\mathcal{P}+\frac{\deg_L(Z_*(g,\Phi))}{\deg_L(X)}\LL\cdot\hat{\xi}.
    \end{aligned}
\end{equation}
For convenience, we call such series the \textit{difference series}. Note that the signature here is different from \cite{YZZ2,YZ1,Yuan1}, since we do not pass to the N\'{e}ron--Tate height, so there is no $-1$ in our height series. Note that we do not have coefficient $2$ either, the main reason is that the arithmetic intersection is taken over $\mathcal{O}_E$ in our case, not $\mathcal{O}_F$.

The decomposition below contains two kinds of terms. The non-singular pseudo-theta and pseudo-Eisenstein terms are treated by the key lemma \cite[Lemma~2.4]{Guo1} and replaced by the corresponding theta and Eisenstein series. The remaining terms are the $B$-terms introduced in Section~3. We keep them separate. After completing the non-singular part, we define its difference from $\mathcal D(g,\Phi)$ to be the $B$-residual. Since both terms are automorphic, the $B$-residual is also a holomorphic automorphic form. The cuspidality of $\mathcal D(g,\Phi)$ then gives an identity containing its zero-th Fourier coefficient, which is computed in Section~\ref{Contributions of singular pseudo-Eisenstein series}.

\subsection{Induction formula from comparison}\label{Comparison subsection}
In this subsection, we decompose our difference series \ref{Difference series}, and simplify the result by combining specific computational results in \cite[Sec 3, Sec 4]{Guo1} and \cite[Sec 4, Sec 5]{Guo2}. Then we apply \cite[Lemma 2.4]{Guo1} to obtain an induction formula of the modular height.

\subsubsection*{The first line of the difference series}
We first consider
\begin{equation*}
    \Pr'I'(0,g,\Phi)+\wh{\mathcal{Z}}_{*}(g,\Phi)\cdot\mathcal{P}.
\end{equation*}
The computation in this part is precisely the main theorem of the second paper \cite{Guo2} in this series, and we refer to \cite[Section 5.4]{Guo2} for details. There we conclude that the contribution of $\Pr'I'(0,g,\Phi)+\wh{\mathcal{Z}}_{*}(g,\Phi)\cdot\mathcal{P}$ after \cite[Lemma 2.4]{Guo1} is
\begin{equation*}
    \begin{aligned}
        &-[F:\QQ](-\sum_{i=1}^n\frac{1}{i}+\gamma+\log(4\pi)-\frac{1}{n})E_*(0,g,\Phi)\\
        &-\sum_{v\nmid\infty\ \mathrm{nonsplit}}\avint_{E^1\backslash E^1(\BA)}\theta(g,\tau,\overline{k}_{\Phi_v}\otimes\Phi^v)d\tau.
    \end{aligned}
\end{equation*}

\subsubsection*{The second line of the difference series}
Now we consider the second line in the difference series, i.e., consider
\begin{equation*}
    -\Pr'\mathcal{J}'(0,g,\Phi)-\wh{\mathcal{Z}}_{*}(g,\Phi)\cdot\hat{\xi}.
\end{equation*}
Recall the decomposition of $\Pr'\mathcal{J}'$ in \cite[Theorem 3.4]{Guo1} and the decomposition of $\wh{\mathcal{Z}}_{*}(g,\Phi)\cdot\hat{\xi}$ in Theorem \ref{decomposition containing D}, we have
\begin{equation*}
    \begin{aligned}
        &-\Pr'\mathcal{J}'(0,g,\Phi)-\wh{\mathcal{Z}}_{*}(g,\Phi)\cdot\hat{\xi}\\
        =& (c_0-2c_3[F:\QQ])E_*(0,g,\Phi)+C_*(0,g,\Phi)\\
        &-\sum_{v\nmid\infty}\big(2E'(0,g,\Phi)(v)-\mathcal{F}_\Phi^{(v)}(g,1)\big)\\
        &-\sum_{y\in U\backslash V_f}r(g)\Phi(y)\frac{\deg_L(Z(y))}{\deg_L(X)}h_{\LL_U}(Z(y_0)_U)\\
        &-\frac{[F:\QQ]}{n}E_*(0,g,\Phi)
    \end{aligned}
\end{equation*}
Here $c_0$ is defined in \cite[(3.1.8)]{Guo1} and $c_3$ is defined in \cite[(3.2.4)]{Guo1}, and the last term again comes from the weakly admissibility of our Green function, which is explained in \cite[Section 5.2]{Guo2}.

Again, this is a finite sum of Eisenstein series and pseudo-Eisenstein series. We also have the following itemized result:
\begin{enumerate}
    \item If $v$ is split in $E/F$, the pseudo-Eisenstein series
    \begin{equation*}
        2E'(0,g,\Phi)(v)-\mathcal{F}_\Phi^{(v)}(g)=\sum_{a\in F^\times}W_a^v(0,g,\Phi^v)\tilde{f}_{\Phi_v,a}(g),
    \end{equation*}
    where
    \begin{equation*}
        \tilde{f}_{\Phi_v,a}(g)=2\big(W'_{a,v}(0,g,\Phi_v)-\frac{1}{2}\log|a|_v\cdot W_{a,v}(0,g,\Phi_v)\big)-f_{\Phi_v,a}(g).
    \end{equation*}
    Here the term $f_{\Phi_v,a}(g)$ is introduced in Section \ref{Modular heights of special divisors}. Especially, we have
    \begin{equation*}
        \tilde{f}_{\Phi_v,a}(1)=2\big(-\frac{L_v'(n+1,\eta_v^{n+1})}{L_v(n+1,\eta_v^{n+1})}+\log|d_v|\big)W_{a,v}(0,1,\Phi_v)+2|d_v|^{n+\frac{1}{2}}\frac{1-|d_v|}{N_v-1}\log N_v.
    \end{equation*}
    This follows from the explicit result in \cite[Lemma 4.5]{Guo1} and Proposition \ref{Explicit f term}.

    Recall that the Bruhat decomposition of $\UU(F_v)$ has two cells, represented by $1$ and by the Weyl element $w$. We first construct an ordinary local preimage and then determine its value on the identity cell. In the change-of-conductor calculation in the
    proof of \cite[Lemma~4.5]{Guo1}, the second term above is the compact
    coefficient family
    $$
      2|d_v|^{n+\frac12}\frac{1-|d_v|}{N_v-1}\log N_v\,
      1_{\mathcal O_{F_v}}(a).
    $$
    By \cite[Lemma~2.5]{Guo1}, every compactly supported locally constant
    function on $F_v$ has an ordinary Siegel--Weil preimage which may be chosen
    with zero value on the identity cell.  Hence, after separating the scalar
    multiple of the standard Whittaker family, the remaining compact correction
    is non-singular.  This proves that
    $2E'(0,g,\Phi)(v)-\mathcal F_\Phi^{(v)}(g)$ is a non-singular
    pseudo-Eisenstein series.

    Thus, the associated Eisenstein series is of the form
    \begin{equation*}
        2\big(-\frac{L_v'(n+1,\eta_v^{n+1})}{L_v(n+1,\eta_v^{n+1})}+\log|d_v|\big)E(0,g,\Phi)+E(0,g,\Phi_v^+\otimes\Phi^v)+E(0,g,\Phi_v^-\otimes\Phi^v).
    \end{equation*}
    The last two terms vanish for all but finitely many $v$.  We choose the
    compact open-cell preimage as in \cite[Lemma~2.5]{Guo1}. In particular,
    \begin{equation*}
        \Phi_v^+(0)+\Phi_v^-(0)=0.
    \end{equation*}

    \item If $v$ is inert in $E/F$, the discussion is very similar to the previous case, except that in this case
    \begin{equation*}
     \begin{aligned}
         \tilde{f}_{\Phi_v,a}(1)=&2\big(-\frac{L_v'(n+1,\eta_v^{n+1})}{L_v(n+1,\eta_v^{n+1})}+\log|d_v|\big)W_{a,v}(0,1,\Phi_v)\\
         &+2|d_v|^{n+\frac{1}{2}}\frac{1-|d_v|}{N_v-1}\log N_v-B_{a,v}(1).
     \end{aligned}
    \end{equation*}
    where the extra $B$-series is defined in Proposition \ref{Explicit f term inert}. We keep this term in $\mathcal B_n$ and determine its zero-th Fourier coefficient in Section~\ref{Contributions of singular pseudo-Eisenstein series}. By \cite[Lemma~2.5]{Guo1}, the remaining compact correction is an ordinary term and is included in the regular part as in the split case.

    \item If $v$ is ramified in $E/F$, note that in this case we are actually providing an explicit computation regarding the dual Schwartz function $\Phi_v^\vee$, and we have explained this in Section \ref{Explicit local computations at ramified places}. We have
    \begin{equation*}
         \tilde{f}_{\Phi^\vee_v,a}(1)=-2\frac{L_v'(n+1,\eta_v^{n+1})}{L_v(n+1,\eta_v^{n+1})}W_{a,v}(0,1,\Phi^\vee_v)-B^\vee_{a,v}(1),
    \end{equation*}
    since we assume $d_v=1$ when $v$ is ramified in $E/F$. As in the inert case, we will treat the $B$-series separately, while the other parts are handled in the same way.
\end{enumerate}

Therefore, after separating off the exceptional $B$-coefficients, the regular contribution of $-\Pr'\mathcal{J}'(0,g,\Phi)-\wh{\mathcal{Z}}_{*}(g,\Phi)\cdot\hat{\xi}$ is
\begin{equation*}
   \begin{aligned}
        & \big(c_0-2c_3[F:\QQ]+2\sum_{v\notin \infty}\big(\frac{L_v'(n+1,\eta_v^{n+1})}{L_v(n+1,\eta_v^{n+1})}-\log|d_v|\big)-\frac{[F:\QQ]}{n}\big)E_*(0,g,\Phi)\\
        &+\sum_{v\nmid\infty}C(0,g,\Phi)(v)-\sum_{v\nmid\infty}(E(0,g,\Phi_v^+\otimes\Phi^v)+E(0,g,\Phi_v^-\otimes\Phi^v))\\
        &-\sum_{y\in U\backslash V_f}r(g)\Phi(y)\frac{\deg_L(Z(y))}{\deg_L(X)}h_{\LL_U}(Z(y_0)_U).
    \end{aligned}
\end{equation*}
This also proves Theorem \ref{main theorem of arithmetic Siegel-Weil II}.

\subsubsection*{The third line}
For the third line
\begin{equation*}
    -\frac{\deg_L(Z_*(g,\Phi))}{\deg_L(X)}\LL\cdot\mathcal{P}+\frac{\deg_L(Z_*(g,\Phi))}{\deg_L(X)}\LL\cdot\hat{\xi},
\end{equation*}
it equals to
\begin{equation*}
    E_*(0,g,\Phi)(\LL\cdot\mathcal{P}-h_{\LL_U}(X_U))
\end{equation*}
following the discussion in \ref{geometric Siegel--Weil formula}.

\subsubsection*{The regular automorphic completion and the $B$-residual}

The preceding computation decomposes the non-constant Fourier expansion of the
difference series into two parts.  The first part is a finite sum of non-singular
pseudo-theta series and non-singular pseudo-Eisenstein series.  We denote by
$
    \mathcal R_n(g,\Phi)
$
the genuine automorphic form obtained from this part by replacing every
non-singular pseudo-theta series and pseudo-Eisenstein series by the associated
theta series and Eisenstein series, respectively, as prescribed by the key lemma
\cite[Lemma~2.4]{Guo1}.  The second part is formed by the exceptional local
coefficients $B_{a,v}$ at the non-split places.

We define the \emph{$B$-residual} by
\begin{equation}\label{definition of B residual}
    \mathcal B_n(g,\Phi):=\mathcal D(g,\Phi)-\mathcal R_n(g,\Phi).
\end{equation}
Both $\mathcal D(g,\Phi)$ and $\mathcal R_n(g,\Phi)$ are holomorphic automorphic forms of the prescribed weight. Hence $\mathcal B_n(g,\Phi)$ is also a holomorphic automorphic form.  For every nonzero Fourier coefficient, the remaining part is given by the $B$-coefficients in Propositions~\ref{Explicit f term inert} and~\ref{Explicit f term ramified}. If an ordinary Whittaker family is included in the regular part, its full Eisenstein series is included there at the same time. Thus these $B$-coefficients determine $\mathcal B_n$ away from the zero-th coefficient. We denote the zero-th coefficient by
$$
    \mathcal B_{n,0}(g,\Phi).
$$

For convenience, from now on we write $Z=Z(y_0)$.  The regular automorphic
completion is
\begin{equation}\label{Explicit difference series}
    \begin{aligned}
        \mathcal R_n(g,\Phi)=
        &\big([F:\QQ](\frac{2}{n}-\gamma-\log\pi)+c_0
          +2\sum_{v\nmid\infty}\big(\frac{L_v'(n+1,\eta_v^{n+1})}
          {L_v(n+1,\eta_v^{n+1})}-\log|d_v|\big)\big)E(0,g,\Phi) \\
        &-\sum_{v\nmid\infty\ \mathrm{nonsplit}}
          \avint_{E^1\backslash E^1(\BA)}
          \theta(g,\tau,\overline{k}_{\Phi_v}\otimes\Phi^v)d\tau\\
        &+\sum_{v\nmid\infty}C(0,g,\Phi)(v)
          -\sum_{v\nmid\infty}
          \big(E(0,g,\Phi_v^+\otimes\Phi^v)
          +E(0,g,\Phi_v^-\otimes\Phi^v)\big)\\
        &+\big(h_{\LL_U}(Z_U)-h_{\LL_U}(X_U)+\LL\cdot\mathcal P\big)
          E(0,g,\Phi).
    \end{aligned}
\end{equation}
Consequently,
\begin{equation}\label{difference regular plus B}
    \mathcal D(g,\Phi)=\mathcal R_n(g,\Phi)+\mathcal B_n(g,\Phi).
\end{equation}
Here we used
$$
 [F:\QQ]\Big(\sum_{i=1}^n\frac1i-\gamma-\log(4\pi)-2c_3\Big)
 =[F:\QQ]\Big(\frac2n-\gamma-\log\pi\Big).
$$

\subsubsection*{The constant terms}

Since the difference series $\mathcal D(g,\Phi)$ is cuspidal, taking the
zero-th Fourier coefficient in \eqref{difference regular plus B} gives
\begin{equation}\label{constant decomposition}
   0=\operatorname{CT}\big(\mathcal R_n(g,\Phi)\big)
     +\mathcal B_{n,0}(g,\Phi).
\end{equation}
We first compute the regular term.  The residual coefficient
$\mathcal B_{n,0}$ will be treated in
Section~\ref{Contributions of singular pseudo-Eisenstein series}.

For the final height formula we only need the zero-th coefficient at $g=1$. By construction, all pseudo-Eisenstein series entering $\mathcal R_n$ are non-singular. Applying \cite[Lemma~2.4]{Guo1} to their complete linear combination replaces them by the corresponding Eisenstein series, and the sum of the auxiliary constant-term corrections is zero. Therefore the zero-th coefficient of $\mathcal R_n$ is obtained from the identity terms of the displayed sections.

For $E(0,1,\Phi)$ the identity cell is $\Phi(0)$.  This normalization is also
checked independently by the geometric Siegel--Weil identity
\cite[Section~4.2]{Guo2}: comparison of the constant terms of
$$
 \deg_L Z(1,\Phi)=-\deg_L(X)E(0,1,\Phi)
$$
gives exactly $\operatorname{CT}E(0,1,\Phi)=\Phi(0)$.
For $C(0,1,\Phi)(v)$ the identity cell is
$c_{\Phi_v}(1,0)\Phi^v(0)$.  Finally the pair
$$
 E(0,1,\Phi_v^+\otimes\Phi^v)
 +E(0,1,\Phi_v^-\otimes\Phi^v)
$$
has identity cell
$(\Phi_v^+(0)+\Phi_v^-(0))\Phi^v(0)=0$ by the chosen local preimages.
Thus the key lemma gives directly
\begin{equation}\label{Equation to 0}
 \begin{aligned}
 0={}&-\sum_{v\nmid\infty\ {\rm nonsplit}}
       (\bar k_{\Phi_v}\otimes\Phi^v)(0)\\
    &+\sum_{v\nmid\infty}c_{\Phi_v}(1,0)\Phi^v(0)
      +c_4\Phi(0)+\mathcal B_{n,0}(1,\Phi).
 \end{aligned}
\end{equation}
This identity comes directly from the full global regular combination.

Here for convenience, we denote by
\begin{equation*}
    c_4=[F:\QQ](\frac{2}{n}-\gamma-\log\pi)+c_0+2\sum_{v\notin \infty}\big(\frac{L_v'(n+1,\eta_v^{n+1})}{L_v(n+1,\eta_v^{n+1})}-\log|d_v|\big)+h_{\LL}(Z)-h_{\LL}(X)+\LL\cdot\mathcal{P}.
\end{equation*}

We now simplify \eqref{Equation to 0} place by place.  Our fixed
lattice-standard Schwartz function satisfies $\Phi(0)\ne0$.

Now, for each non-archimedean place $v$, we consider the contribution of this fixed $v$ from the first two lines in \ref{Equation to 0}.
\begin{enumerate}
    \item If $v$ is split in $E$, then the only contribution comes from the second line, which gives
    \begin{equation*}
        c_{\Phi_v}(1,0)=\log|d_v|\Phi_v(0).
    \end{equation*}

    \item  If $v$ is nonsplit in $E/F$, then both the first and the second line contributes to the formula. Recall the local comparison in \cite[Propositions~5.6, 5.7, and~5.9]{Guo2}, together with the intertwining calculation \cite[Lemma~4.8]{Guo1}. We have
    \begin{equation*}
        -\bar{k}_{\Phi_v}(0)+c_{\Phi_v}(1,0)=\log|d_v|\Phi_v(0).
    \end{equation*}
\end{enumerate}

Taking all these terms into consideration gives
\begin{equation}\label{constant equation with B}
 0=\Big(\sum_{v\notin\infty}\log|d_v|+c_4\Big)\Phi(0)
   +\mathcal B_{n,0}(1,\Phi).
\end{equation}
We put
\begin{equation}\label{definition beta n}
   \beta_n(\Phi):=\frac{\mathcal B_{n,0}(1,\Phi)}{\Phi(0)}.
\end{equation}
The comparison therefore gives the preliminary induction identity
\begin{equation}\label{induction formula}
    \begin{aligned}
    h_{\LL}(X)=&h_{\LL}(Z)-(2\gamma+2\log2\pi-\frac{2}{n})[F:\QQ]\\
    &+2\sum_{v\notin\infty}\frac{L_v'(n+1,\eta_v^{n+1})}
      {L_v(n+1,\eta_v^{n+1})}
      +\frac{L'_f(0,\eta)}{L_f(0,\eta)}
      +\frac{1}{2}\log |d_{E/F}|+2\log|d_F|+\beta_n(\Phi).
    \end{aligned}
\end{equation}
The purpose of the next subsection is to prove $\beta_n(\Phi)=0$. Equation~\eqref{induction formula} is the preliminary identity obtained before this final step.

Here we apply \cite[Theorem 1.1]{Guo2} for modular heights of CM points, and also use the fact that
\begin{equation*}
    \frac{L'(0,\eta)}{L(0,\eta)}=\frac{L_f'(0,\eta)}{L_f(0,\eta)}-\frac{1}{2}[F:\QQ](\gamma+\log 4\pi),
\end{equation*}
and also
\begin{equation*}
    \log|d_E/ d_F|-\log |d_F|=\log |d_{E/F}|,
\end{equation*}
where $d_{E/F}$ is the norm of the relative discriminant $D_{E/F}$. We also remind the reader that
\begin{equation*}
    \sum_{v\notin \infty}\log|d_v|=-\log|d_F|,
\end{equation*}
since the absolute value on the left hand side is the $p$-adic one, while the absolute value on the right hand side is the archimedean one.

We next determine $\beta_n(\Phi)$. We first treat the inert places and then the ramified places. The case $n=1$ is supplied by the independent modular-height formula for unitary Shimura curves.

\subsection{Cancellation of the \texorpdfstring{$B$}{B}-residual}
\label{Contributions of singular pseudo-Eisenstein series}

We now compute the zero-th Fourier coefficient of $\mathcal B_n$. As in Section~\ref{Comparison subsection}, an ordinary Whittaker term is included in the regular part together with its Eisenstein series. The formulas in Section~\ref{Explicit local terms} give the nonzero local coefficients of the remaining $B$-terms. We determine their zero-th coefficient below.

\subsubsection*{The curve case $n=1$}

\begin{lemma}\label{curve B calibration}

For $n=1$, the residual term in \eqref{induction formula} vanishes:
$$
 \beta_1(\Phi)=0.
$$

\end{lemma}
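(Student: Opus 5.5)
The plan is to compare the preliminary induction identity \eqref{induction formula} at $n=1$ with an independently known modular height formula for unitary Shimura curves, rather than to evaluate $\mathcal B_{1,0}(1,\Phi)$ directly. The text preceding the lemma says the case $n=1$ ``is supplied by the independent modular-height formula for unitary Shimura curves''. Concretely, I would invoke \cite[Theorem~1.2]{Guo2}, which for $n=1$ and the standing self-dual inert lattice datum has empty $\Sigma_f$. Through the normalization comparison of \cite[Section~3]{Guo2} and the finite morphisms of \cite[Figure~2]{Guo2}, this formula is compatible with Yuan's quaternionic formula \cite[Theorem~1.1]{Yuan1}. This gives an explicit value of $h_{\LL}(X)$ for the curve.

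The first step is to specialize \eqref{induction formula} to $n=1$. Here $Z=Z(y_0)$ is zero-dimensional: a CM cycle attached to the rank-one Hermitian space $V_{y_0}^\perp$. Its height $h_{\LL}(Z)$ is the CM-point modular height given by \cite[Theorem~1.1]{Guo2}, which is a Colmez-type expression in $L_f'(0,\eta)/L_f(0,\eta)$, $\log|d_{E/F}|$, $\log|d_F|$ and $[F:\QQ](\gamma+\log 4\pi)$. I would substitute this value and rewrite $L_f'(0,\eta)/L_f(0,\eta)$ through the functional equation as $-L_f'(1,\eta)/L_f(1,\eta)$ plus discriminant and archimedean constants. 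The result expresses $h_{\LL}(X)$ as an explicit quantity plus $\beta_1(\Phi)$.

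The second step is to match this expression term by term with the curve formula of \cite[Theorem~1.2]{Guo2}, or equivalently with Theorem~\ref{Modular height} at $n=1$: $2L_f'(2,\eta^2)/L_f(2,\eta^2)$, the constant $-(2\gamma+2\log 2\pi-1)[F:\QQ]$, the term $2\log|d_F|$, and no $L'(1,\eta)$ or $d_{E/F}$ contribution. Every term except $\beta_1(\Phi)$ should cancel identically, forcing $\beta_1(\Phi)=0$. The value is independent of $\Phi$ because $\beta_1$ is a ratio and the comparison holds for every admissible $\Phi$ with $\Phi(0)\ne0$.

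The main obstacle is bookkeeping of normalizations. The factor $2\times\frac12$ from intersecting over $\mathcal O_E$ rather than $\mathcal O_F$, the Hodge normalization of \cite[Remark~3.8]{Guo2}, and the sign convention of the height series (no Néron--Tate $-1$) must all agree between \eqref{induction formula} and the curve theorem. The local factors $L_v(2,\eta_v^2)$ at ramified places must also match the ramified $B$-term convention used with $\Phi_v^\vee$. I would check these first by computing the split-place contributions separately on both sides, since there the cancellation $f_{\Phi_v,a}(1)=2S_{a,1}$ is exact. Only after that would I compare the global constants.
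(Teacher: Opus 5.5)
Your proposal is correct and is essentially the paper's argument. You specialize \eqref{induction formula} to $n=1$ with $\beta_1(\Phi)$ left untouched, and compare it with the independently obtained curve height \eqref{modular height of unitary Shimura curve}, which is transferred from Yuan's quaternionic formula through \cite[Section~3]{Guo2} with $\Sigma_f$ empty. The only difference is that the paper identifies the regular side with Yuan's term-by-term comparison in \cite{Yuan1}, whereas you evaluate it directly by inserting the CM-point height for the zero-dimensional $Z(y_0)$. In your version the functional-equation rewriting should be unnecessary, since the Colmez-type term in $h_{\LL}(Z)$ is meant to cancel the $+L_f'(0,\eta)/L_f(0,\eta)$ already present in \eqref{induction formula}.
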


\begin{proof}

There are two independent computations.  Under the standing lattice datum of the present paper every inert place is self-dual, so the auxiliary set $\Sigma_f$ in the more general curve theorem of \cite{Guo2} is empty.  The first computation is the geometric
calculation of \cite[Section~3.3]{Guo2}: the finite morphisms among the
unitary, RSZ and quaternionic Shimura curves, together with the projection
formula, transfer Yuan's quaternionic formula and give
\begin{equation}\label{modular height of unitary Shimura curve}
 h_{\LL_U}(X_U)
 =-2\Bigl(\gamma+\log(2\pi)-\frac12\Bigr)[F:\QQ]
   +2\frac{\zeta_F'(2)}{\zeta_F(2)}+2\log|d_F|.
\end{equation}
This is the independent geometric computation which we use as the base case.

The second computation is the present comparison with $n=1$ in
\eqref{induction formula}, leaving $\beta_1(\Phi)$ untouched.  After the
non-singular key lemma is applied, the regular terms agree, with the
normalizations of \cite[Section~3]{Guo2}, with Yuan's term-by-term comparison
in \cite[Sections~4.6, 4.7 and~5]{Yuan1}.  Hence the regular part is exactly
the right-hand side of \eqref{modular height of unitary Shimura curve}.
Comparing the two formulas gives $\beta_1(\Phi)=0$.

\end{proof}

\subsubsection*{Vanishing of the contribution at inert places}

We first record the point needed to pass from the explicit nonzero coefficients to the contribution of their completion.

\begin{lemma}\label{inert B rationality lemma}
Fix $n$. At every good inert place $v$, each coefficient of the completed inert $B$-term, including its zero-th coefficient, is $\log N_v$ times a rational function of $N_v$. This rational function depends only on $n$ and on the valuation of the Fourier index.
\end{lemma}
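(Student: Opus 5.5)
The plan is to start from the explicit nonzero coefficients and then handle the completion. By Proposition~\ref{Explicit f term inert}, for each Fourier index $a$ with $r=v(a)$ the coefficient $B_{a,v}(1)$ equals $f_{\Phi_v,a}(1)-2S_{a,n}$. Here $f_{\Phi_v,a}(1)$ is a finite sum over the types $s\le r$ of Lemma~\ref{algebraic lemma inert case} of the products $s\log N_v\cdot\vol(U_v)/\vol(U_{y,v})$. The volume ratios were computed explicitly as $(1+(-1)^nN_v^{-(n+1)})N_v^{n(s-1)}(N_v^n-(-1)^n)$, times the factor $|d_v|^{n+1/2}$, which equals $1$ at a good place. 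Together with the factor $|a|_v^n=N_v^{-nr}$, each summand is $\log N_v$ times a Laurent polynomial in $N_v$ whose shape depends only on $n$, $r$ and $s$. The term $S_{a,n}$ is visibly $\log N_v$ times a rational function of $N_v$ depending only on $n$ and $r$. So the nonzero coefficients have the claimed form. The same bookkeeping applies at $g\in\UU(F_v)$ on the big Bruhat cell, since the Weil representation acts on the lattice-shell functions through scalars that are powers of $N_v$ (and signs).

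Next, write the local $B$-term as the Whittaker transform of the explicit lattice-shell function $\Phi^B_{n,v}$ of \eqref{inert B local function}, up to a Schwartz function. Truncate it as $\Phi^B_{n,v,\le M}=\sum_{i\le M}c_i 1_{p_v^i\Lambda_v}$. The coefficients $c_i$ are obtained by inverting the triangular scaling relation with diagonal entries $\alpha_{i,v}$. These entries come from $W_{a,v}(0,1,1_{p_v^i\Lambda_v})=\alpha_{i,v}W_{p_v^{-2i}a,v}(0,1,1_{\Lambda_v})$ and are, up to sign, powers of $N_v$. Hence each $c_i$ is $\log N_v$ times a rational function of $N_v$ depending only on $n$ and $i$. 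Every truncation is an honest Schwartz function, so its Eisenstein series is automorphic and its zero-th coefficient at the identity cell is $\Phi^B_{n,v,\le M}(0)=\sum_{i\le M}c_i$. The coefficient on the $w$-cell is the intertwining value $\sum_i c_i\,\vol(p_v^i\Lambda_v)\cdot(\text{local }M(0))$. This is again rational in $N_v$, because the volumes $N_v^{-2i(n+1)}$ and the standard unramified intertwining factor $L_v(n,\eta_v^n)/L_v(n+1,\eta_v^{n+1})$ are rational in $N_v$.

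The main obstacle is the passage to the limit defining the completion, together with showing that the zero-th coefficient of the completed term is the limit of these truncated constant terms. The function $\Phi^B_{n,v}$ is not locally constant at $0$ and grows like $i$ on the shells, so the identity-cell value diverges. What must be shown is that the combination entering $\mathcal B_n$ (identity cell plus $w$-cell) has a limit given by a convergent geometric-type series in $N_v^{-n}$. The approach is to group the shells in consecutive pairs: the factor $[\frac{i+1}{2}]$ together with the sign $(-1)^{(n+1)i}$ make the differences telescope. The resulting series is then $\sum_i P(i)N_v^{-ni}$ with $P$ a polynomial, and any such series sums to a rational function of $N_v$ by differentiating a geometric series. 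If the divergent parts do not cancel directly, the fallback is to use the holomorphy of $\mathcal B_n$ established after \eqref{definition of B residual}. Its zero-th coefficient is then determined linearly by the nonzero coefficients, for instance through the constant term of a Hecke-translate relation at $v$ (the Hecke operator at the good place $v$ acts on constant terms by rational expressions in $N_v$). Solving the resulting linear equation with rational coefficients would give $\mathcal B_{n,0}$ as $\log N_v$ times a rational function of $N_v$ depending only on $n$.

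Finally, independence from anything other than $n$ and the valuation holds because every input depends only on $n$, $r$ and $N_v$: the orbit set, the Hodge multiplicity $s\log N_v$ (Lemma~\ref{independence of n inert place}), the volume ratios, the Whittaker values and the intertwining factors. In particular, nothing depends on the other places or on the choice of $\Phi^v$.
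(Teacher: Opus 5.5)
Your treatment of the nonzero coefficients matches the paper: they are read off from Proposition~\ref{Explicit f term inert}. The argument for the zero-th coefficient, however, has a genuine gap.

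\textbf{The truncation route fails.} The value $\Phi^B_{n,v,\le M}(0)=\sum_{i\le M}c_i$ is just the shell value at depth $M$, namely $2(-1)^{(n+1)M}[\frac{M+1}{2}]\log N_v$. For odd $n$ this grows linearly in $M$; for even $n$ it oscillates with growing amplitude. Pairing shells does not help, because $[\frac{i+1}{2}]$ takes the \emph{same} value $k$ at $i=2k-1$ and $i=2k$, so consecutive shells reinforce rather than cancel. Meanwhile the $w$-cell contributions of $1_{p_v^i\Lambda_v}$ are geometrically damped by the shrinking volumes, so nothing compensates the divergence at the identity cell.

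This is not an artifact of your truncation. Proposition~\ref{appendix inert B not smooth} shows that these shell coefficients carry the $rN_v^{-n(r+1)}$ tail of a derivative, and are not the Whittaker coefficients of any smooth section at the Siegel--Weil point. Hence no limit of Eisenstein series of Schwartz functions at $s=0$ defines the completed term through its constant term.

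\textbf{The fallback does not close the gap.} It works with the global residual $\mathcal B_n$. Its coefficients, and $\mathcal B_{n,0}$ itself, mix the $B$-families at all non-split places with the Whittaker functions of $\Phi^v$. So it cannot produce a local rational function of $N_v$ depending only on $n$. Moreover, $\mathcal B_n$ is not a Hecke eigenform at $v$, so there is no linear equation in $\mathcal B_{n,0}$ alone to solve.

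\textbf{The missing idea is to use the variable $s$.} The paper argues as follows.
\begin{enumerate}
\item Before differentiation, the local Whittaker functions and the two Bruhat-cell constant-term values (identity and $w$) for the standard data at a good inert place are finite geometric sums times the standard intertwining factor. Hence they are rational in $N_v$ and $N_v^{-s}$.
\item The ordinary terms are realized by ordinary Siegel--Weil sections via \cite[Lemma~2.5]{Guo1}, so their constant terms have the same form.
\item Differentiating at $s=0$, the identity $\left.\frac{d}{ds}N_v^{-ms}\right|_{s=0}=-m\log N_v$ produces exactly one factor of $\log N_v$.
\end{enumerate}
Proposition~\ref{appendix inert B comparison} makes this concrete for odd $n$. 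On the relevant range, $B_{a,v}(1)=-W'_{a,v}(0,1,\Phi_v)$ up to a multiple of the standard family and an ordinary compact family. So the completion is, up to ordinary terms, minus the $s$-derivative of the spherical Eisenstein family, whose cell values are rational in $N_v^{-s}$.

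In your proposal, $\log N_v$ enters only as a prefactor of the shell values. Nothing in it produces a finite zero-th coefficient.
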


\begin{proof}
For the nonzero coefficients this is exactly what the formulas in Proposition~\ref{Explicit f term inert} say.  We explain why the zero-th coefficient has the same property.  At a good inert place the extension $E_v/F_v$ is unramified, the lattice is self-dual, and the local Schwartz function is the standard characteristic function.  Once $n$ is fixed, the only numerical parameter in these local data is therefore $N_v$.

The same is true for the zero-th coefficient. Recall that the Bruhat decomposition of $\UU(F_v)$ has two cells, represented by $1$ and the Weyl element $w$. The local Whittaker functions and the two corresponding constant-term values are computed from these two cells, as in \cite[Lemma~4.5]{Guo1}. Before differentiation they are finite geometric sums multiplied by the standard intertwining factor. Hence their coefficients are rational functions of $N_v$ and $N_v^{-s}$. By \cite[Lemma~2.5]{Guo1}, the ordinary terms can be represented by ordinary Siegel--Weil sections, so their constant terms have the same property. Therefore the zero-th coefficient is obtained by a finite linear calculation with coefficients rational in $N_v$.

Finally,
$$
 \left.\frac{d}{ds}N_v^{-ms}\right|_{s=0}=-m\log N_v.
$$
Hence differentiation introduces exactly one factor $\log N_v$. The zero-th contribution is obtained from these completed local relations, and the lemma follows.
\end{proof}

We also use the following finite-dimensional fact.  For the fixed weight, level and central character, the space of holomorphic Hilbert modular forms containing $\mathcal B_n$ is finite dimensional.  Choose a basis whose Fourier coefficients are algebraic, and then choose finitely many nonzero Fourier coefficient functionals which are jointly injective on this space.  The matrix formed by these coefficients is algebraic and has full rank.  Hence every other Fourier coefficient, including the zero-th coefficient, is an algebraic linear combination of this finite set.  Each of the chosen coefficients is, by the explicit local formulas, a finite $\overline{\QQ}$-linear combination of logarithms of rational primes.  It follows that there is a finite set of rational primes, depending on the fixed global datum, such that every Fourier coefficient of $\mathcal B_n$ belongs to the $\overline{\QQ}$-span of the logarithms of primes in this finite set.  This allows us to compare the coefficients of the different logarithms below.

\begin{proposition}\label{inert B cancellation proposition}
For every inert finite place $v$,
$$
       \operatorname{ct}_v([B_{n,v}])=0.
$$
\end{proposition}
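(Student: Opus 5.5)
The plan is to combine the two structural facts just established: Lemma~\ref{inert B rationality lemma}, and the finite-dimensionality argument showing that all Fourier coefficients of $\mathcal B_n$ lie in the $\overline{\QQ}$-span of $\log p$ for $p$ in a finite set $S$. Write the zero-th coefficient of the completed inert $B$-term at $v$ as $\operatorname{ct}_v([B_{n,v}])=R_n(N_v)\log N_v\cdot\Phi^v(0)$. Here $R_n$ is the universal rational function supplied by Lemma~\ref{inert B rationality lemma}; it depends only on $n$ and on valuation $0$, not on $v$. It then suffices to prove $R_n\equiv 0$ as a rational function. Because $R_n$ is universal, it is enough to show $R_n(N_v)=0$ for infinitely many values $N_v$, since a nonzero rational function has only finitely many zeros.

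\textbf{Step 1: isolate a single place.} Choose an auxiliary global datum $(F,E,\BV,\Phi)$ as in \cite[Section~4.1]{Guo1} with an inert place $v$ whose residue characteristic $p$ lies outside the finite set of primes attached to every other contribution. The excluded primes are those dividing the other nonsplit places, $d_F$, $d_{E/F}$, and the primes entering the regular part $\mathcal R_n$. This is possible by Chebotarev, which yields infinitely many inert places of distinct residue characteristic. Now apply the identity \eqref{constant equation with B} and decompose $\mathcal B_{n,0}(1,\Phi)=\sum_{w\ \mathrm{nonsplit}}\operatorname{ct}_w([B_{n,w}])$. This decomposition is additive over places, as for $\mathcal F^{(v)}$ in \eqref{F series}.

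\textbf{Step 2: compare logarithms.} Every term of \eqref{constant equation with B} other than $\operatorname{ct}_v$ is a $\overline{\QQ}$-combination of $\log q$ for primes $q\ne p$, together with the height terms $h_{\LL}(X)$, $h_{\LL}(Z)$, $\LL\cdot\mathcal P$. I would vary only the local datum at $v$, keeping the global auxiliary data fixed at the other places; the induction identity \eqref{induction formula} then shows that the height terms contribute no $\log p$ beyond the regular $L$-factor at $v$. That regular factor is already accounted for in $\mathcal R_n$. Assuming linear independence of $\{\log q\}$ over $\overline{\QQ}$ (Baker), the coefficient of $\log p$ must vanish, which gives $R_n(N_v)\Phi^v(0)=0$. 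Since $\Phi^v(0)\ne 0$, we get $R_n(N_v)=0$. Letting $N_v$ range over infinitely many prime powers forces $R_n\equiv0$, so $\operatorname{ct}_v([B_{n,v}])=0$ at every inert $v$, including those in the original datum, by universality.

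\textbf{Main obstacle.} The delicate point is Step 2. One must know that no other term, in particular $h_{\LL}(X)-h_{\LL}(Z)$, secretly carries a $\log p$ component depending on the inert place $v$ beyond what $\mathcal R_n$ records. I would handle this by running the same comparison for two data that differ only in whether $p$ is inert or split in an auxiliary CM extension $E'$, using the independence of the local Hodge multiplicity (Lemma~\ref{independence of n inert place}) and the fact that the split $B$-term is zero. The difference of the two identities then isolates $\operatorname{ct}_v$.
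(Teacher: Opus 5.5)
Your Step 2 does not go through, and the obstruction is more basic than the one you flag. The identity \eqref{constant equation with B} is a single real-number identity. Besides $\mathcal B_{n,0}(1,\Phi)$ it contains $h_{\LL}(X)$, $h_{\LL}(Z)$, $\LL\cdot\mathcal P$, $\gamma$, $\log\pi$ and $2\sum_{v\nmid\infty}L_v'(n+1,\eta_v^{n+1})/L_v(n+1,\eta_v^{n+1})$. The last of these is an infinite series in which essentially every rational prime occurs, $p$ included. So there is no finite set of ``primes attached to every other contribution'' to avoid; the other inert places alone are already infinitely many.

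Theorem~\ref{Q independence of log} is a statement about \emph{finite} $\overline{\QQ}$-combinations of logarithms of primes, so it gives no meaning to ``the coefficient of $\log p$'' in this identity. It cannot separate $\log p$ from $\gamma$, $\log\pi$, an $L'/L$-value, or an unknown arithmetic intersection number. The same objection applies to $\mathcal B_{n,0}=\sum_w\operatorname{ct}_w([B_{n,w}])$ under your own ansatz: if $R_n\neq0$, every good inert $w$ contributes $R_n(N_w)\log N_w\,\Phi^w(0)$, so this is again an infinite series of logarithms.

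Your control of the height terms through \eqref{induction formula} is circular. That formula is \eqref{constant equation with B} rearranged, and it carries $\beta_n(\Phi)=\mathcal B_{n,0}(1,\Phi)/\Phi(0)$ on its right-hand side. The auxiliary $E'$ comparison also cannot isolate $\operatorname{ct}_v$. Whether $v$ is inert is a property of the global CM extension, so switching it changes $G$, $X$, $Z$ and $\mathcal P$. The difference of the two identities then contains the difference of all these heights, which you do not control independently.

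The missing idea is the one you quote in your first sentence and then never use. The paper does not extract anything from \eqref{constant equation with B}. Instead it:
\begin{enumerate}
\item works with the holomorphic form $\mathcal B_n=\mathcal D-\mathcal R_n$ itself;
\item uses finite-dimensionality and an algebraic basis to put \emph{all} of its Fourier coefficients into $\overline{\QQ}\log S$ for one finite set $S$ of primes;
\item tests a \emph{nonzero} index $a$ with positive even valuation at the inert places above a prime $p\notin S$.
\end{enumerate}
At a nonzero index only finitely many places contribute, since a good inert $w$ with $w(a)=0$ has $B_{a,w}=0$. So Theorem~\ref{Q independence of log} applies legitimately and no height or $L'/L$-term enters. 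After arranging that places with $N_v\le M$ split, the fixed sign of the rational function for $N_v>M$ rules out cancellation among the places above $p$.

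One warning if you adopt that route. For an index of valuation $2$ at $v$, the relevant rational function is the explicit one of Proposition~\ref{Explicit f term inert}, namely $2(1+(-1)^nN_v^{-(n+1)})\big((-1)^{n+1}N_v^{-n}+N_v^{-2n}\big)$, and this is not zero. Taken literally, the argument therefore contradicts that proposition rather than yielding $\operatorname{ct}_v([B_{n,v}])=0$. The passage from the nonzero coefficients to the zero-th one must be re-examined before you rely on it.
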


\begin{proof}
We first work at good inert places. By Lemma~\ref{inert B rationality lemma}, for each one of the finitely many local coefficient functionals used above its value has the form
$$
 c_v\log N_v,
 \qquad c_v\in\overline{\QQ},
$$
where $c_v$ is obtained by evaluating a fixed rational function of $N_v$. If this rational function is nonzero, there is a constant $M$ such that for $N_v>M$ its value has a fixed sign and is nonzero.

Choose the auxiliary CM extension so that every place with $N_v\le M$, together with the finitely many places carrying nonstandard local data, is split. This is possible by weak approximation. Let $p$ be a rational prime outside the finite set of logarithms occurring in the global residual, and suppose that at least one place $v\mid p$ is inert. By weak approximation in $F$, choose a totally positive Fourier index in the even split-vector range at all inert places above $p$ and a unit at the other prescribed places.

Suppose one of these rational functions is nonzero. For $N_v>M$, all its nonzero values have the same sign. Since the standard Whittaker factors are nonzero, we obtain
$$
 \left|\sum_{\substack{v\mid p\\ v\text{ inert}}} c_v\right|>0.
$$
Thus the coefficient of $\log p$ in the chosen Fourier coefficient is nonzero. On the other hand, Theorem~\ref{Q independence of log} gives
$$
 \sum_{\substack{v\mid p\\ v\text{ inert}}} c_v=0.
$$
The two equations are incompatible. Hence the local coefficient functionals at good inert places vanish, and in particular
$$
 \operatorname{ct}_v([B_{n,v}])=0.
$$

At the finitely many inert places with $|d_v|\ne1$, the change-of-conductor terms are ordinary terms in \cite[Lemma~4.5]{Guo1}. Their full Eisenstein completions are included in $\mathcal R_n$, so the same conclusion holds for every inert place.
\end{proof}

\subsubsection*{Ramified places: reduction to the finite ramified set}

After Proposition~\ref{inert B cancellation proposition}, only the finitely many places ramified in $E/F$ remain. We now use this finiteness. By choosing the local data at these places and applying weak approximation, we can vary one ramified place while keeping the others fixed. We first treat odd $n$, and then determine the constant $c_{n,v}$ when $n$ is even.

\begin{lemma}\label{ramified odd B cancellation lemma}
If $v$ is ramified and $n$ is odd, using the dual-lattice notation of Proposition~\ref{Explicit f term ramified}, then
$$
       \operatorname{ct}_v([B_{n,v}^{\vee}])=0.
$$
\end{lemma}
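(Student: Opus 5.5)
We will follow the strategy of Proposition~\ref{inert B cancellation proposition}, adapted to the finite ramified set. When $n$ is odd, Proposition~\ref{Explicit f term ramified} gives an explicit and constant-free ramified coefficient,
$$
B^\vee_{a,v}(1)=(1-N_v^{-(n+1)})\sum_{i=1}^r iN_v^{-ni}\log N_v .
$$
This is $\log N_v$ times a rational function of $N_v$ that depends only on $n$ and $r=v(a)$; there is no undetermined $c_{n,v}$. The first step is to show that the completed local term $[B^\vee_{n,v}]$ has the same rationality property for its zero-th coefficient. The argument is that of Lemma~\ref{inert B rationality lemma}: at a ramified place of odd residue characteristic with $d_v=1$, the dual lattice is $\varpi_{E_v}^{-1}$ times a $\varpi_{E_v}$-modular lattice. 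The local Whittaker functions and the two Bruhat-cell constant terms are then finite geometric sums in $N_v^{-s}$ times the standard intertwining factor, and differentiation at $s=0$ contributes exactly one factor $\log N_v$. Ordinary pieces are moved into $\mathcal R_n$ by \cite[Lemma~2.5]{Guo1}.

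The second step uses the finite-dimensionality argument already set up before Proposition~\ref{inert B cancellation proposition}. Every Fourier coefficient of $\mathcal B_n$ is a $\overline{\QQ}$-combination of finitely many $\log p$. By Proposition~\ref{inert B cancellation proposition}, the inert contributions are gone, so the remaining contributions come from the ramified places. We then isolate the fixed ramified place $v$. To do this we vary the global datum by weak approximation: we keep the local data at the other ramified places fixed, and we pick totally positive Fourier indices $a$ with $v(a)=r$ prescribed and $a$ a unit at the other ramified places. At those other places the $B$-coefficient then has bounded valuation and a fixed value.

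We then compare $\log p$-coefficients, where $p$ is the residue characteristic of $v$, using Theorem~\ref{Q independence of log}. The explicit coefficient $\sum_{i=1}^r iN_v^{-ni}$ is strictly positive for $r\ge1$, so the contributions cannot cancel among places above $p$ of the same kind. This sign argument is the one in the inert proof.

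The main obstacle is passing from nonzero coefficients to the zero-th coefficient. Unlike the inert case, $v$ cannot be moved to large norm, since the ramified set is fixed by $E/F$. The plan is to use the recursion \eqref{behavior of B}, which carries over in the odd case with no $c_n$. That recursion expresses the completed local function as an explicit combination of $\Phi_v^\vee$ scaled by powers of $\varpi_{E_v}$, namely the truncated logarithm $\log_v^+$ against $\Lambda_v^\vee$.

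We then compute $\operatorname{ct}_v$ of this combination directly: the identity cell at $x=0$, together with the $w$-cell given by the intertwining value. The expected outcome is that the two cells cancel, because $\sum_i i N_v^{-ni}$ telescopes against the intertwining factor $(1-N_v^{-(n+1)})$, exactly as the analogous quantity vanished for $n=1$ in Lemma~\ref{curve B calibration}.

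If this direct computation resists, the fallback is a dimension-independence argument. By Lemma~\ref{independence of n ramified place}(1), the local data are unchanged when $\varpi_{E_v}$-modular hyperbolic planes are added, so the normalized zero-th coefficient should be compared with the $n=1$ case. There it vanishes by Lemma~\ref{curve B calibration}, and the rationality of Step~1 would transport this vanishing to general odd $n$.
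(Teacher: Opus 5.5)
Your proposal has a genuine gap: none of its steps actually forces the zero-th coefficient to vanish.

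\begin{itemize}
\item Step~2 cannot be run at a ramified place. In the inert proof the contradiction comes from choosing $p$ \emph{outside} the finite set of primes whose logarithms occur in $\mathcal B_n$. The residue characteristic of a ramified $v$ is necessarily in that set, because $B^\vee_{a,v}(1)$ is itself a nonzero multiple of $\log N_v$. So Theorem~\ref{Q independence of log} imposes nothing here. Positivity of $\sum_{i=1}^r iN_v^{-ni}$ only confirms that the nonzero coefficients are nonzero.
\item Step~1 is idle for the same reason: the inert argument used rationality only to let $N_v\to\infty$.
\item Your core step is stated as an expected cancellation between the two Bruhat cells, with no computation. As formulated it is not well posed: the shell function equal to $i\log N_v$ on $\varpi_{E_v}^i\Lambda_v^\vee\setminus\varpi_{E_v}^{i+1}\Lambda_v^\vee$ is unbounded near $0$, so its ``value at $x=0$'' needs a regularization.
\item The fallback fails for three reasons. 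Lemma~\ref{independence of n ramified place}(1) concerns the Hodge multiplicity $\deg_L(X)D_0(y_v)$, not $B^\vee_{a,v}$, which depends on $n$ through $N_v^{-ni}$ and $1-N_v^{-(n+1)}$. Lemma~\ref{curve B calibration} is the global statement $\beta_1(\Phi)=0$, not the vanishing of an individual local $\operatorname{ct}_v$. And rationality in $N_v$ at fixed $n$ cannot carry a vanishing from $n=1$ to other odd $n$.
\end{itemize}

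The paper does not get cancellation between the cells; it shows each vanishes separately, by a purely local argument.

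\begin{itemize}
\item The identity cell is zero because the centered truncations $\sum_{i=0}^{N-1}i\log N_v\,(1_{\varpi_{E_v}^{i}\Lambda_v^\vee}-1_{\varpi_{E_v}^{i+1}\Lambda_v^\vee})$ vanish at the origin for every $N$.
\item For the $w$-cell, the missing idea is a torus-eigenvalue relation. Direct subtraction gives
$$
B^\vee_{\Nm(\varpi_{E_v})a,v}(1)-N_v^{-n}B^\vee_{a,v}(1)=(1-N_v^{-(n+1)})\sum_{j=1}^{r+1}N_v^{-nj}\log N_v ,
$$
which is ordinary. Hence $\mathcal T_{\varpi_{E_v}}[B^\vee_{n,v}]=N_v^{-n}[B^\vee_{n,v}]$ modulo an ordinary Whittaker family. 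The $w$-cell functional is fixed by the normalized torus operator, while $N_v^{-n}\neq1$. So $\operatorname{ct}_v([B^\vee_{n,v}])=N_v^{-n}\operatorname{ct}_v([B^\vee_{n,v}])$, which forces it to vanish.
\end{itemize}

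You had the recursion \eqref{behavior of B} in hand, but you used it only to identify the local function, not to extract this eigenvalue relation on the constant-term functional. No global input (log-independence, weak approximation, or the curve case) is needed.
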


\begin{proof}
Proposition~\ref{Explicit f term ramified} gives
$$
 B_{a,v}^{\vee}(1)
 =(1-N_v^{-(n+1)})\sum_{i=1}^{r} i N_v^{-ni}\log N_v,
 \qquad r=v(a)\ge0.
$$
The corresponding function on $\Lambda_v^\vee\setminus\{0\}$ has value $i\log N_v$ on $\varpi_{E_v}^i\Lambda_v^\vee\setminus\varpi_{E_v}^{i+1}\Lambda_v^\vee$.  The contribution of the identity term in the Bruhat decomposition is zero. This can be seen from the finite centered truncations
$$
 \sum_{i=0}^{N-1}i\log N_v\,
 \big(1_{\varpi_{E_v}^{i}\Lambda_v^\vee}
       -1_{\varpi_{E_v}^{i+1}\Lambda_v^\vee}\big),
$$
whose value at the origin is zero for every $N$.

Let $\varpi_{E_v}$ be a uniformizer.  Since $v$ is ramified, $|\varpi_{E_v}|_{E_v}=N_v^{-1}$ and multiplication by $\Nm(\varpi_{E_v})$ increases $r$ by one.  A direct subtraction of the displayed formula shows that, modulo an ordinary Whittaker family,
$$
 \mathcal T_{\varpi_{E_v}}[B_{n,v}^{\vee}]
 =N_v^{-n}[B_{n,v}^{\vee}].
$$
The second term in the Bruhat decomposition, represented by $w$, is fixed by the normalized torus operator, whereas $N_v^{-n}\ne1$.  Applying the constant-term functional to this relation therefore gives
$$
 \operatorname{ct}_v([B_{n,v}^{\vee}])
 =N_v^{-n}\operatorname{ct}_v([B_{n,v}^{\vee}]),
$$
and the result follows. The contribution from the identity term is the zero-th coefficient of the completed local family used above.
\end{proof}

\subsubsection*{The type-$0$ constant at ramified places when $n$ is even}

The geometric calculation in Lemma~\ref{independence of n ramified place} has already determined the local values for all types $s>0$ and leaves only the type-$0$ constant $c_{n,v}$. After the inert contribution is removed, the remaining residual is supported at finitely many ramified places. We determine $c_{n,v}$ from the two norm classes.

\begin{lemma}\label{ramified completed stabilization lemma}
Let $v$ be ramified, let $n$ be even, and choose
$$
 l_v\in\mathcal O_{F_v}^\times\setminus\Nm(E_v^\times).
$$
Let $\widetilde B_{a,v}^{\vee}(g_v)$ be the local family obtained from the remaining global residual.  For every fixed $g_v$ and all $a$ of sufficiently large valuation,
\begin{equation}\label{two norm stabilization}
 \widetilde B_{a,v}^{\vee}(g_v)
 +\widetilde B_{a l_v,v}^{\vee}(g_v)
 =2\widetilde B_{0,v}^{\vee}(g_v).
\end{equation}
Consequently the left-hand side is unchanged under
$a\mapsto\Nm(\varpi_{E_v})a$.
\end{lemma}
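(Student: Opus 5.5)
The plan is to reduce \eqref{two norm stabilization} to a statement about local Weil-representation sums over the two norm classes, and then check it against the explicit ramified coefficients of Proposition~\ref{Explicit f term ramified}.

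\emph{Local reduction.} After Proposition~\ref{inert B cancellation proposition} and Lemma~\ref{ramified odd B cancellation lemma}, the residual $\mathcal B_n$ is a holomorphic form whose nonzero coefficients factor as $W^v_a(0,g,\Phi^v)\widetilde B^\vee_{a,v}(g_v)$, summed over the finitely many ramified $v$. By weak approximation I vary one ramified place at a time and keep the others fixed. This isolates $\widetilde B^\vee_{a,v}$ as a function of $a\in F_v^\times$ and $g_v$. I then use the Bruhat decomposition of $\UU(F_v)$, with cells represented by $1$ and $w$. On the open cell, $\widetilde B^\vee_{a,v}(g_v)$ is a Whittaker-type integral against a local function $\phi_v$ on $\Lambda_v^\vee$. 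This function is locally constant away from $0$ and is the analogue of \eqref{non-Schwartz function ramified}. Its shell values are read off from the coefficients of Proposition~\ref{Explicit f term ramified}, through the triangular relation coming from the scaling covariance of the Whittaker functions.

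\emph{Stabilization.} The first sum I plan to compute is
\[
\widetilde B^\vee_{a,v}+\widetilde B^\vee_{al_v,v}.
\]
By the formulas in Proposition~\ref{Explicit f term ramified}, the terms $\pm(r+2)N_v^{-n(r+1)}$ cancel in this sum. The terms $\sum_{i=1}^r N_v^{-ni}$ combine to a convergent geometric tail. The $c_n$ terms combine to $c_n\bigl(N_v^{-nr}-\sum_{i=0}^rN_v^{-ni}\bigr)$.

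As $r\to\infty$, every term with a factor $N_v^{-nr}$ vanishes, so the sum converges to a limit depending only on $c_n$ and $N_v$. The remainder is a Schwartz (ordinary Whittaker) family. By \cite[Lemma~2.5]{Guo1} I absorb that remainder into the regular part $\mathcal R_n$, which is why the lemma is stated for the residual family $\widetilde B^\vee$.

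I then identify the limit with $2\widetilde B^\vee_{0,v}(g_v)$. I read $\widetilde B^\vee_{0,v}$ as the value of the completed local family on the identity cell, that is, the "$a=0$" Whittaker coefficient. To compute it, I take the limit $a\to0$ through $a=\Nm(\varpi_{E_v})^r a_0$ with $a_0$ running over both norm classes. Averaging over the two classes is exactly what removes the dependence on $\epsilon(\Lambda_v)$ seen in Lemma~\ref{algebraic lemma ramified}.

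For general $g_v$, I use $B$-equivariance to reduce to $g_v=1$ or $g_v=w$. The Weil representation acts through characters that are trivial on $\mathcal O_{F_v}^\times$ up to the quadratic character $\eta_v$, and $\eta_v$ cancels once the two norm classes are summed. The final assertion then follows: since \eqref{two norm stabilization} holds for all large valuations, the left-hand side is constant in $r$, and hence unchanged under $a\mapsto\Nm(\varpi_{E_v})a$.

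\emph{Main obstacle.} The hard step is justifying that the open-cell limit of the completed family equals the identity-cell value $\widetilde B^\vee_{0,v}$, since $\phi_v$ is not locally constant at $0$. My first attempt uses the finite centered truncations from the proof of Lemma~\ref{ramified odd B cancellation lemma}. Each truncation is a genuine Schwartz function, so the equality of the two cells holds for it, and the task becomes controlling the uniform convergence of the two-class sum. This works only because the sum over both norm classes kills the oscillating $(r+2)N_v^{-n(r+1)}$ boundary terms. I would also use the difference relation \eqref{behavior of B} to check that no linear-in-$r$ growth survives.
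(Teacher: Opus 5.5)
Your approach differs from the paper's, and it has a genuine gap. You want to check \eqref{two norm stabilization} against the explicit coefficients of Proposition~\ref{Explicit f term ramified}. Those coefficients still contain the undetermined constant $c_{n,v}$, and the two-class sum is eventually constant in $r$ for only one value of it. Your own computation gives
\[
 B^\vee_{a,v}(1)+B^\vee_{al_v,v}(1)
 =-N_v^{-1/2}\Big(c_{n,v}\sum_{i=0}^{r-1}N_v^{-ni}+2\sum_{i=1}^{r}N_v^{-ni}\Big)\log N_v ,
\]
and its difference from the $r\to\infty$ limit is exactly
\[
 \kappa\,|a|_v^{n},\qquad
 \kappa=\frac{c_{n,v}+2N_v^{-n}}{1-N_v^{-n}}\,N_v^{-1/2}\log N_v .
\]
Convergence holds for every value of $c_{n,v}$, so it proves nothing. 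The lemma asserts equality for every sufficiently large valuation, and Lemma~\ref{c_n lemma} uses exactly this exact constancy to solve for $c_{n,v}$.

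Your way of disposing of $\kappa|a|_v^n$, by calling it an ordinary family and moving it into $\mathcal R_n$, is not available. First, $\mathcal R_n$ was fixed in Section~\ref{Comparison subsection}, and moving terms into it changes $\mathcal B_n=\mathcal D-\mathcal R_n$, including its zero-th coefficient. Second, the claim is false. For a smooth section, the tail of $\int_{F_v}f(wn(b)g)\psi(-ab)\,db$ contributes a multiple of $\eta_v(a)|a|_v^n$, which cancels between the two norm classes. Hence the two-class sum of any ordinary family is eventually constant. A class-symmetric term $\kappa|a|_v^n$ can therefore be ordinary only if $\kappa=0$, that is, only if you already know $c_{n,v}=-2N_v^{-n}$. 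The argument is circular.

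Your truncation idea runs into the same obstruction. Each truncation satisfies the identity only beyond a threshold that grows with the truncation level. The failure of uniformity is precisely the term $\kappa|a|_v^n$, so passing to the limit gives no exact identity at any fixed large $r$.

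What is missing is the structural argument the paper uses, which needs no input on $c_{n,v}$. After isolating $v$ by weak approximation, regard $\widetilde B^\vee_{a,v}(g_v)$ as the Whittaker integral $\int_{F_v}f(wn(b)g_v)\psi(-ab)\,db$ of the local component of the automorphic residual $\mathcal B_n=\mathcal D-\mathcal R_n$. Its values on both Bruhat cells are supplied by $\mathcal D$ and $\mathcal R_n$. Split the integral at a large radius:
\begin{itemize}
\item On the compact part, $\psi(-ab)=1$ once $v(a)$ is large, so this part agrees with the $a=0$ integral.
\item On the tail, the identity $wn(b)=n(-b^{-1})m(b^{-1})wn(-b^{-1})w$ pulls out a factor $\eta_v(b)$. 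After the substitution $b\mapsto l_vb$, the tails for $a$ and $al_v$ cancel because $\eta_v(l_v)=-1$. For the same reason the tail of the $a=0$ integral vanishes.
\end{itemize}
This is the argument behind the Whittaker stabilization in \cite[Corollary~4.7]{Guo1}. It gives the identity exactly for all large valuations and works for each fixed $g_v$ directly. You therefore do not need your reduction to $g_v\in\{1,w\}$, which left equivariance under the Borel subgroup does not give anyway: it only reduces to $1$ or $wn(x)$.

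Note also that $\widetilde B^\vee_{0,v}$ is this $a=0$ Whittaker integral, that is, the contribution of the $w$-cell. It is not the value on the identity cell, and your second and fourth paragraphs conflate the two.
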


\begin{proof}
The stabilization statement in \cite[Corollary~4.7 and the discussion following it]{Guo1} gives, for a non-norm unit $l_v$ and $v(a)$ sufficiently large,
$$
 W_{a,v}(s,1,\Phi_v)+W_{a l_v,v}(s,1,\Phi_v)
 =2W_{0,v}(s,1,\Phi_v).
$$
Its proof uses the identity
$$
 wn(b)=n(-b^{-1})m(b^{-1})wn(-b^{-1})w.
$$
For $|b|_v$ sufficiently large, the two tails cancel after
$b\mapsto l_vb$ because $\eta_v(l_v)=-1$. On the remaining compact set both additive characters are $1$ when $v(a)$ is large.

The same argument applies to the local family defined by $\mathcal B_n$. After Proposition~\ref{inert B cancellation proposition}, only finitely many ramified places remain. The identity
$$
 \mathcal B_n=\mathcal D-\mathcal R_n
$$
gives the local value on both terms of the Bruhat decomposition. By weak approximation, we may vary the index at the chosen place $v$ while keeping the local data at the other ramified places fixed. Hence the preceding tail calculation gives \eqref{two norm stabilization}.
\end{proof}

\begin{lemma}\label{c_n lemma}
For a ramified place $v$ and even $n$, the constant in Lemma~\ref{independence of n ramified place} is
$$
       c_{n,v}=-2N_v^{-n}.
$$
\end{lemma}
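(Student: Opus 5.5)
The plan is to combine the two-norm stabilization of Lemma~\ref{ramified completed stabilization lemma} with the explicit nonzero coefficients of Proposition~\ref{Explicit f term ramified}, evaluated at $g_v=1$. The residual family $\widetilde B^\vee_{a,v}(1)$ and the explicit family $B^\vee_{a,v}(1)$ differ only by ordinary Whittaker families. Every such family satisfies the stabilization identity $W_{a,v}+W_{al_v,v}=2W_{0,v}$ for large $v(a)$, by \cite[Corollary~4.7]{Guo1}. Therefore the sum
$$
\Sigma_r:=B^\vee_{a,v}(1)+B^\vee_{al_v,v}(1),\qquad r=v(a),
$$
taken over the two norm classes of a fixed valuation, must be independent of $r$ once $r$ is large. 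The first step is to justify this reduction. This amounts to checking that the regular part removed in Section~\ref{Comparison subsection} consists of full Whittaker families of ordinary sections. I expect this bookkeeping, rather than the algebra, to be the main obstacle. Concretely, I must make sure that no hidden multiple of a non-stabilizing family has been absorbed into $c_{n,v}$.

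The second step is pure computation. Write $N=N_v$. For $a$ in the class $\Nm(E_v^\times)(-1)^{n/2}d_\BV$ and $al_v$ in the opposite class, Proposition~\ref{Explicit f term ramified} gives the two values of $B^\vee$. Adding them, the terms $\pm(r+2)N^{-n(r+1)}$ cancel. One is left with
$$
\Sigma_r=N^{-\frac12}\Big(-c_{n,v}\sum_{i=0}^{r-1}N^{-ni}-2\sum_{i=1}^{r}N^{-ni}\Big)\log N .
$$

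The third step imposes the invariance $\Sigma_{r+1}=\Sigma_r$, which is the consequence stated in Lemma~\ref{ramified completed stabilization lemma} under $a\mapsto \Nm(\varpi_{E_v})a$. Subtracting gives
$$
\Sigma_{r+1}-\Sigma_r=N^{-\frac12}\big(-c_{n,v}N^{-nr}-2N^{-n(r+1)}\big)\log N=0 .
$$
Since $\log N\neq 0$, this forces $c_{n,v}=-2N^{-n}$, independently of $r$. That independence is a consistency check: the relation holds for all large $r$ simultaneously.

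Finally, I would cross-check the value against the formal difference relation \eqref{behavior of B}, which should now hold identically. With this value, $B^\vee_{a,v}$ becomes a combination of shifted standard Whittaker families plus a term to which the torus-operator argument of Lemma~\ref{ramified odd B cancellation lemma} applies.
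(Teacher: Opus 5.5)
Your proposal is correct and follows the paper's proof: you add the two norm-class formulas of Proposition~\ref{Explicit f term ramified}, use the invariance under $a\mapsto\Nm(\varpi_{E_v})a$ from Lemma~\ref{ramified completed stabilization lemma}, and subtract consecutive valuations to obtain $-N_v^{-1/2}N_v^{-nr}(c_{n,v}+2N_v^{-n})\log N_v=0$. Your extra remark, that the ordinary Whittaker families separating $\widetilde B^\vee$ from $B^\vee$ also satisfy the stabilization identity, is a point the paper uses without comment; your final cross-check against \eqref{behavior of B} is not needed for the lemma.
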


\begin{proof}
As in Proposition~\ref{Explicit f term ramified}.  The two norm-class formulas there give, for $a\ne0$ and $r=v(a)$,
\begin{equation}\label{explicit S r}
 \begin{aligned}
 B_{a,v}^{\vee}(1)+B_{a l_v,v}^{\vee}(1)
 =-N_v^{-1/2}\left(
 c_{n,v}\sum_{i=0}^{r-1}N_v^{-ni}
 +2\sum_{i=1}^{r}N_v^{-ni}
 \right)\log N_v.
 \end{aligned}
\end{equation}
By Lemma~\ref{ramified completed stabilization lemma}, the left-hand side is unchanged for all sufficiently large $r$ after replacing $a$ by $\Nm(\varpi_{E_v})a$.  Since
$v(\Nm(\varpi_{E_v}))=1$, subtracting \eqref{explicit S r} at $r$ and $r+1$ gives
$$
 0=-N_v^{-1/2}N_v^{-nr}(c_{n,v}+2N_v^{-n})\log N_v.
$$
Hence
$$
 c_{n,v}=-2N_v^{-n}.
$$
Thus $c_{n,v}$ is determined from modularity and the two-norm stabilization, after the geometric calculation has reduced the problem to this one constant.
\end{proof}

\begin{corollary}\label{ramified even B cancellation}
If $v$ is ramified and $n$ is even, then
$$
       \operatorname{ct}_v([B_{n,v}^{\vee}])=0.
$$
\end{corollary}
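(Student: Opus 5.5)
Now that Lemma~\ref{c_n lemma} gives $c_{n,v}=-2N_v^{-n}$, the plan is to substitute this value into the two norm-class formulas of Proposition~\ref{Explicit f term ramified} and then repeat the torus-operator argument of Lemma~\ref{ramified odd B cancellation lemma}. I expect the substitution to turn the coefficients into a family that is an eigenvector of $\mathcal T_{\varpi_{E_v}}$ with eigenvalue different from $1$, modulo ordinary Whittaker families. Concretely, in the norm class $a\in\Nm(E_v^\times)(-1)^{n/2}d_\BV$ the coefficient becomes
$$
 B^\vee_{a,v}(1)=N_v^{-\frac12}\Big(rN_v^{-n(r+1)}-\sum_{i=1}^rN_v^{-ni}\Big)\log N_v .
$$
The opposite class is handled the same way, and the $c_n$-sum there telescopes against $(r+2)N_v^{-n(r+1)}$.

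The first step is to split each class into a term proportional to the standard coefficient $W_{a,v}(0,1,\Phi_v^\vee)$ and a remainder. The standard coefficient depends on $r$ through a geometric sum in $N_v^{-n}$, together with the sign $\pm$ from the norm class. Its multiple is an ordinary Whittaker family, and by the convention of Section~\ref{Comparison subsection} its full Eisenstein completion is already in $\mathcal R_n$.

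The second step is to use the difference relation \eqref{behavior of B} with $c_n$ now fixed. Combined with the stabilization of Lemma~\ref{ramified completed stabilization lemma}, this should give, modulo ordinary families,
$$
 \mathcal T_{\varpi_{E_v}}[B^\vee_{n,v}]=N_v^{-n}[B^\vee_{n,v}].
$$
To reach this, I would compute $B^\vee_{\Nm(\varpi_{E_v})a,v}-N_v^{-n}B^\vee_{a,v}$ class by class. I would then check that the result is a combination of the standard Whittaker family and a compactly supported correction. Such a correction is ordinary by \cite[Lemma~2.5]{Guo1}, with a preimage chosen to vanish on the identity cell.

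The last step copies Lemma~\ref{ramified odd B cancellation lemma}. The $w$-cell contribution is fixed by the normalized torus operator, and $N_v^{-n}\ne1$, so applying $\operatorname{ct}_v$ gives
$$
 \operatorname{ct}_v([B^\vee_{n,v}])=N_v^{-n}\operatorname{ct}_v([B^\vee_{n,v}]),
$$
which forces $\operatorname{ct}_v([B^\vee_{n,v}])=0$. For the identity-cell value I would use centered finite truncations of the associated shell function on $\Lambda_v^\vee$, one for each norm class, each vanishing at the origin.

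The main obstacle is the interaction of the two norm classes under $\mathcal T_{\varpi_{E_v}}$. Multiplication by $\Nm(\varpi_{E_v})$ may swap the class of $a$ when $-1$ or the uniformizer norm is not in the relevant coset. If it does, the eigen-relation has to be proved for the pair (class-sum, class-difference) rather than for each class separately. The class-sum is controlled by Lemma~\ref{ramified completed stabilization lemma}. The class-difference should be $\eta_v$-twisted, picking up the eigenvalue $-N_v^{-n}$, which is still $\ne1$. I would first check this swap behaviour explicitly, then run the fixed-point argument on both combinations.
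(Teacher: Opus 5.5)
There is a genuine gap. Your fixed-point step needs the eigen-relation $\mathcal T_{\varpi_{E_v}}[B^\vee_{n,v}]=N_v^{-n}[B^\vee_{n,v}]$ modulo ordinary families, and this relation is false for even $n$.

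First, the ``main obstacle'' you describe does not occur. Since $\Nm(\varpi_{E_v})\in\Nm(E_v^\times)$, the map $a\mapsto\Nm(\varpi_{E_v})a$ never changes the norm class, so no eigenvalue $-N_v^{-n}$ arises.

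Second, carry out the subtraction you propose. Put $\varepsilon(a)=1$ if $a\in\Nm(E_v^\times)(-1)^{n/2}d_\BV$ and $\varepsilon(a)=-1$ otherwise, and let $r=v(a)$. Once $c_{n,v}=-2N_v^{-n}$, the opposite class is exactly the negative of your formula, and
\[
 B^\vee_{\Nm(\varpi_{E_v})a,v}(1)-N_v^{-n}B^\vee_{a,v}(1)
 =\varepsilon(a)N_v^{-\frac12}\bigl(N_v^{-n(r+2)}-N_v^{-n}\bigr)\log N_v .
\]
As $r\to\infty$ this tends to $-\varepsilon(a)N_v^{-n-\frac12}\log N_v$, a nonzero limit that depends on the norm class.

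By the closed-cell relation used in Proposition~\ref{appendix inert B not smooth}, the Whittaker coefficients of a smooth section behave like $c_0+c_1\eta_v(a)|a|_v^{n}$ for large $v(a)$. Their limit is therefore class-independent. So whatever multiple of the standard family you subtract, the remaining correction is not locally constant at $0$, and \cite[Lemma~2.5]{Guo1} does not apply.

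More precisely, modulo ordinary families $[B^\vee_{n,v}]$ splits into two pieces. One is the $\eta_v$-odd constant $-\varepsilon(a)\frac{N_v^{-n}}{1-N_v^{-n}}N_v^{-\frac12}\log N_v$, which is fixed by $\mathcal T_{\varpi_{E_v}}$ (eigenvalue $1$). The other is $\varepsilon(a)\,rN_v^{-n(r+1)}N_v^{-\frac12}\log N_v$, with eigenvalue $N_v^{-n}$. Hence $[B^\vee_{n,v}]$ is not an eigenvector, and the relation $\operatorname{ct}_v=N_v^{-n}\operatorname{ct}_v$ says nothing about the eigenvalue-$1$ piece.

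The missing idea is to use the norm-class antisymmetry directly instead of a torus operator. Your telescoping computation in the opposite class already shows $B^\vee_{al_v,v}(1)=-B^\vee_{a,v}(1)$. Equivalently, the right-hand side of \eqref{explicit S r} vanishes identically once $c_{n,v}=-2N_v^{-n}$. Then \eqref{two norm stabilization} at $g_v=1$ gives
\[
 2\widetilde B^\vee_{0,v}(1)=\widetilde B^\vee_{a,v}(1)+\widetilde B^\vee_{al_v,v}(1)=0
\]
for $v(a)$ large, and this is the paper's proof. The class averaging in Lemma~\ref{ramified completed stabilization lemma} is exactly what removes the $\eta_v$-odd constant that blocks your eigen-relation.
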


\begin{proof}
Substituting $c_{n,v}=-2N_v^{-n}$ into the two explicit norm-class formulas gives
\begin{equation}\label{ramified B anti invariance}
 B_{a,v}^{\vee}(1)+B_{a l_v,v}^{\vee}(1)=0
 \qquad(a\in F_v^\times).
\end{equation}
For $v(a)$ sufficiently large, \eqref{two norm stabilization} at $g_v=1$ therefore gives
$$
 2\widetilde B_{0,v}^{\vee}(1)
 =\widetilde B_{a,v}^{\vee}(1)
  +\widetilde B_{a l_v,v}^{\vee}(1)=0.
$$
Thus the zero-th contribution at $v$ is zero.
\end{proof}

\begin{theorem}\label{B residual cancellation theorem}
For every $n\ge1$,
$$
 \mathcal B_{n,0}(1,\Phi)=0,
 \qquad
 \beta_n(\Phi)=0.
$$
If $v$ is ramified and $n$ is even, then also
$$
 c_{n,v}=-2N_v^{-n}.
$$
\end{theorem}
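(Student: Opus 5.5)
The plan is to assemble the zero-th coefficient of the $B$-residual place by place from the local results just proved. By construction in \eqref{definition of B residual}, $\mathcal B_n$ is a holomorphic automorphic form whose nonzero Fourier coefficients are finite sums, over the non-split places $v$, of the local $B$-families of Propositions~\ref{Explicit f term inert} and~\ref{Explicit f term ramified}, each multiplied by the standard Whittaker product $W^v_a(0,g,\Phi^v)$ away from $v$. Since ordinary Whittaker families were absorbed into $\mathcal R_n$ together with their Eisenstein completions, the completion $[B_{n,v}]$ of each local family is well defined modulo ordinary terms. Its zero-th coefficient at $g=1$ factors as $\operatorname{ct}_v([B_{n,v}])\cdot\Phi^v(0)$, because the identity-cell value of a pure tensor is the product of the local identity-cell values. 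I will therefore write
\[
\mathcal B_{n,0}(1,\Phi)=\sum_{v\ \mathrm{nonsplit}}\operatorname{ct}_v([B_{n,v}])\,\Phi^v(0).
\]
Only finitely many terms are nonzero, by the finite-dimensionality and $\overline{\QQ}$-span-of-logarithms argument given before Proposition~\ref{inert B cancellation proposition}.

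Next I split the sum according to the type of place. The inert places vanish by Proposition~\ref{inert B cancellation proposition}. The ramified places with $n$ odd vanish by Lemma~\ref{ramified odd B cancellation lemma}, applied in the dual-lattice normalization. For these places I still have to check that passing from $\Phi_v$ to $\Phi_v^\vee$ changes the $B$-term only by an ordinary family, which is already contained in $\mathcal R_n$; the explanation after Proposition~\ref{B series ramified} gives this. The ramified places with $n$ even are handled by Lemma~\ref{c_n lemma}, which gives $c_{n,v}=-2N_v^{-n}$, and then by Corollary~\ref{ramified even B cancellation}, which gives vanishing of the constant term. This also proves the last assertion of the theorem. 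Summing over $v$ gives $\mathcal B_{n,0}(1,\Phi)=0$. Since $\Phi(0)\neq0$ for the fixed lattice-standard Schwartz function, \eqref{definition beta n} then yields $\beta_n(\Phi)=0$. For $n=1$ this agrees with the independent calibration in Lemma~\ref{curve B calibration}, and I will use that agreement as a consistency check of the normalizations.

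The main obstacle is the first step: justifying that the global zero-th coefficient of $\mathcal B_n$ really decomposes as a sum of local constant-term functionals. The $B$-families are not attached to Schwartz functions, since the functions $\Phi^B_{n,v}$ and $\Phi_v^+$ are not locally constant at the origin, so \cite[Lemma~2.4]{Guo1} does not apply directly. My approach here is to approximate each local family by the finite centered truncations used in the proof of Lemma~\ref{ramified odd B cancellation lemma}. Each truncation is a genuine Schwartz function, so the key lemma applies to it with an explicit identity-cell value. I then pass to the limit. This is legitimate because $\mathcal B_n$ lies in a finite-dimensional space on which finitely many nonzero coefficients determine the zero-th one, and the nonzero coefficients of the truncations stabilize to those of $B_{n,v}$ in every fixed valuation range. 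The limit therefore exists, and it equals the local functional $\operatorname{ct}_v$ appearing in the results cited above.
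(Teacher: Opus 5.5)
Your assembly of the local results is exactly the paper's proof: Proposition~\ref{inert B cancellation proposition} at inert places, Lemma~\ref{ramified odd B cancellation lemma} for ramified $v$ and odd $n$, Lemma~\ref{c_n lemma} with Corollary~\ref{ramified even B cancellation} for ramified $v$ and even $n$ (which also gives $c_{n,v}=-2N_v^{-n}$), nothing at split places, then division by $\Phi(0)\neq0$.

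The problem is the step you single out as the main obstacle. The truncation-and-limit argument is not valid, as you can see by running it on an ordinary family. At a good inert place, $1_{\Lambda_v}-1_{p_v^N\Lambda_v}$ is the centered truncation of the shell function equal to $1$ on $\Lambda_v\setminus\{0\}$, and it vanishes at the origin. Every vector of $p_v^N\Lambda_v$ has norm valuation $\ge 2N$, so its Whittaker coefficients at $g=1$ agree with those of $1_{\Lambda_v}$ whenever $v(a)<2N$. Your reasoning would therefore give $\operatorname{CT}E(0,1,\Phi)=0$, whereas $\operatorname{CT}E(0,1,\Phi)=\Phi(0)\neq0$.

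What goes wrong is that $E(0,g,1_{p_v^N\Lambda_v}\otimes\Phi^v)$ is, up to a scalar, a right translate of $E(0,g,\Phi)$ by a torus element at $v$. This is the adelic analogue of passing from $E(\tau)$ to $E(p^{2N}\tau)$ classically. Its level grows with $N$, its constant term is $\Phi^v(0)$ for every $N$, and its coefficients at $g=1$ with $v(a)<2N$ vanish. So the Eisenstein series of the truncations do not lie in the fixed finite-dimensional space containing $\mathcal B_n$. Stabilization of coefficients in bounded valuation ranges then gives no control of constant terms. This matches Proposition~\ref{appendix inert B not smooth}: the $B$-coefficients are not the Whittaker coefficients of any smooth local section. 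That is why the paper keeps the $B$-term out of the key lemma and determines its zero-th coefficient by separate arguments.

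The remedy is to drop that paragraph. In the paper, the cited results are themselves the statements that the local zero-th contributions of the $B$-families vanish, and the proof of the theorem simply adds them; no global limiting argument is used.

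Two smaller points:
\begin{itemize}
\item Finiteness of your sum does not follow from the $\overline{\QQ}$-span argument, which only bounds the primes whose logarithms occur in the coefficients of $\mathcal B_n$. In the paper, finiteness comes from Proposition~\ref{inert B cancellation proposition}, after which only the finite ramified set remains.
\item No passage from $\Phi_v$ to $\Phi_v^\vee$ is needed, because the ramified comparison is carried out with $\Phi_v^\vee$ from the start. The remark after Proposition~\ref{B series ramified} concerns $S_{a,1}$, not a change of the $B$-term by an ordinary family.
\end{itemize}
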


\begin{proof}
Proposition~\ref{inert B cancellation proposition} removes all inert exceptional contributions.  The remaining set of places is the finite ramified set.  For odd $n$ these contributions vanish by Lemma~\ref{ramified odd B cancellation lemma}. For even $n$ they vanish by Lemma~\ref{c_n lemma} and Corollary~\ref{ramified even B cancellation}.  Split places have no exceptional $B$-family.  Therefore
$\mathcal B_{n,0}(1,\Phi)=0$, and \eqref{definition beta n} gives
$\beta_n(\Phi)=0$.
\end{proof}

Consequently, under the standard choice in Definition~\ref{y_0 vector}, equation \eqref{induction formula} has no $B$-term. Section~\ref{final formula section} then removes this auxiliary choice and completes the induction.

\subsection{The final formula}\label{final formula section}
In this subsection, we utilize the previous computations to arrive at the final conclusion. First, we introduce a trick that is important in induction. Then we use the induction formula \ref{induction formula} along with some additional computations to reach the final conclusion.

\subsubsection*{A trick on induction}

There is one remaining issue in carrying out the induction uniformly.
Definition~\ref{y_0 vector} asks for a global vector whose local orthogonal
complements are standard at every finite place.  We now explain precisely
why this global vector may be replaced by one which is standard away from a
finite auxiliary set, and why the resulting error vanishes.

Let $T$ contain all finite places at which we impose a prescribed local
condition, in particular all ramified places.  For every $v\in T$, choose a
local vector
$$
 y_v\in\Lambda_v^\vee
$$
with positive norm and with the local orbit/type required in
Definition~\ref{y_0 vector}.  Such vectors exist by the local orbit
classification above.  At the archimedean places choose vectors in the open
positive cone.  These are finitely many open conditions in the fixed global
$E$-vector space $V$.  Ordinary weak approximation in $V(E)$ therefore gives
a global vector $y\in V(E)$ satisfying all of them simultaneously.

For this fixed rational vector $y$, at all but finitely many finite places
one automatically has
$$
 y\in\Lambda_v^\vee\ \text{primitive},
 \qquad v(q(y))=0.
$$
At every such good place the local algebraic lemmas identify
$\Lambda_v\cap V_y^\perp$ with the standard lattice in the preceding
dimension.  Let $S$ be the finite set of remaining places outside $T$.
Because all prescribed ramified conditions were imposed before applying weak
approximation, $S$ may be taken unramified in $E/F$.  At $v\in S$ we keep the normalized RSZ model and the generalized maximal/almost-self-dual local datum of \cite[Remark~2.9]{Guo2}. Their contributions are contained in the $\overline{\QQ}\log S$ term below. Thus the induction argument gives
\begin{equation}\label{auxiliary S error}
 h_{\LL}(X)-H_n\in
 \overline{\QQ}\log S
 :=
 \sum_{v\in S}\overline{\QQ}\log N_v,
\end{equation}
where $H_n$ is the explicit right-hand side of the induction formula.  The
coefficients are algebraic because the local intersection multiplicities,
stabilizer volumes and Whittaker coefficients in the comparison are
algebraic.

To eliminate the auxiliary set, repeat the construction with one additional
finite collection of open conditions: at \emph{every} place lying over a
rational prime below $S$, require the new vector $y'$ to be primitive of unit
norm and to have the standard local type.  Weak approximation again produces
such a global vector.  Let $S'$ be the finite set of its remaining bad
places.  By construction, the rational-prime supports of $S$ and $S'$ are
disjoint, and the same global quantity satisfies
$$
 h_{\LL}(X)-H_n\in
 \overline{\QQ}\log S'.
$$
Theorem~\ref{Q independence of log} below therefore implies
$$
 h_{\LL}(X)-H_n=0.
$$
We first choose the global vector and then choose the set $S$ from its finitely many nonstandard local places. The second vector is chosen afterwards with rational-prime support disjoint from that of $S$.

\begin{theorem}\label{Q independence of log}

Let $p_1,\ldots,p_m$ be distinct rational primes.  Then
$\log p_1,\ldots,\log p_m$ are linearly independent over
$\overline{\QQ}$.

\end{theorem}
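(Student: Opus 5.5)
The plan is to split the statement into a rational part and a transcendence part. The rational part follows from unique factorization. The passage from $\QQ$ to $\overline{\QQ}$ is supplied by Baker's theorem on linear forms in logarithms, which I will quote rather than reprove. This is the main obstacle: the step is a genuinely deep transcendence result, and no elementary argument is available once $m\ge 3$. For $m=2$, Gelfond--Schneider would suffice, but not beyond that.

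First I will show that $\log p_1,\ldots,\log p_m$ are linearly independent over $\QQ$. Suppose $\sum_i r_i\log p_i=0$ with $r_i\in\QQ$. Clearing denominators, we may assume $r_i\in\ZZ$. Move the terms with negative coefficients to the other side and exponentiate. This gives
$$
 \prod_{r_i>0}p_i^{r_i}=\prod_{r_j<0}p_j^{-r_j}
$$
as an equality of positive integers. Since the two sides involve disjoint sets of primes, unique factorization in $\ZZ$ forces every exponent to vanish, so all $r_i=0$.

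Next I will invoke Baker's theorem. Let $\alpha_1,\ldots,\alpha_m$ be nonzero algebraic numbers, and fix determinations of their logarithms. Baker's theorem says that if $\log\alpha_1,\ldots,\log\alpha_m$ are linearly independent over $\QQ$, then $1,\log\alpha_1,\ldots,\log\alpha_m$ are linearly independent over $\overline{\QQ}$. Apply this with $\alpha_i=p_i$, taking the real logarithms. The hypothesis is exactly the $\QQ$-independence established above. The conclusion, after discarding the constant $1$, gives the asserted $\overline{\QQ}$-linear independence.

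Finally, I will note how the statement is used in the argument above. In \eqref{auxiliary S error} and in the proof of Proposition~\ref{inert B cancellation proposition}, the coefficients are algebraic, and they may be complex rather than real. Baker's theorem holds over all of $\overline{\QQ}\subset\CC$, so no reduction to real algebraic coefficients is needed.

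Consider an element of $\overline{\QQ}\log S\cap\overline{\QQ}\log S'$, where the rational-prime supports of $S$ and $S'$ are disjoint. Write it in both forms and subtract. This gives a $\overline{\QQ}$-linear relation among logarithms of distinct primes. By the theorem, every coefficient in that relation vanishes, so the element is zero. This is exactly the deduction $h_{\LL}(X)-H_n=0$ made in the text.
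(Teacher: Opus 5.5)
Your proof is correct and is essentially the paper's argument. The paper also reduces to the multiplicative independence of distinct rational primes, which your unique-factorization step makes explicit, then quotes Baker's theorem to pass from $\QQ$- to $\overline{\QQ}$-linear independence, and uses $\log N_v=f_v\log p$ to apply the result to the spans $\overline{\QQ}\log S$, exactly as you do.
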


Indeed, $\log N_v=f_v\log p$ for the rational prime $p$ below $v$.
Distinct rational primes are multiplicatively independent algebraic numbers,
so Baker's theorem gives their logarithms linear independence over
$\overline{\QQ}$. See \cite{Wal}. Hence the two spaces in
\eqref{auxiliary S error} and its $S'$-analogue have zero intersection.
The auxiliary-place comparison is therefore exact. The almost $\varpi_{E_v}$-modular twisted Hodge bundle is defined on the full RSZ model. Its ramified local degree is incorporated through the term determined in Section~\ref{Contributions of singular pseudo-Eisenstein series}.

\subsubsection*{The final formula}
Finally, we combine the induction foundation \ref{modular height of unitary Shimura curve} and the induction formula \ref{induction formula}, which gives the final formula
\begin{equation}\label{final formula before functional equation}
    \begin{aligned}
    h_{\LL}(X)=&2\sum_{m=1}^n\frac{L'_f(m+1,\eta^{m+1})}{L_f(m+1,\eta^{m+1})}-(2n\cdot\gamma+2n\log2\pi-\sum_{m=1}^n\frac{2}{m}+1)[F:\QQ]\\
    &+(n-1)\frac{L_f'(0,\eta)}{L_f(0,\eta)}+2n\log|d_F|+\frac{n-1}{2}\log|d_{E/F}|.
    \end{aligned}
\end{equation}

It remains only to convert the term at $s=0$ for the quadratic character
$\eta$.  Here the convention on $L_f$ is important.  Since $L_f(s,\eta)$ is
the product of \emph{all} finite Euler factors and $\eta$ is the totally odd
quadratic character of $E/F$, its conductor is the relative discriminant
$D_{E/F}$.  Thus the symmetric completed function is
$$
 \Lambda(s,\eta)
 =\bigl(|d_F|d_{E/F}\bigr)^{s/2}
  \left(\pi^{-(s+1)/2}\Gamma\!\left(\frac{s+1}{2}\right)\right)^{[F:\QQ]}
  L_f(s,\eta),
$$
and it satisfies
$$
       \Lambda(s,\eta)=\Lambda(1-s,\eta).
$$
Equivalently, if
$$
 L(s,\eta)=
 \left(\pi^{-(s+1)/2}\Gamma\!\left(\frac{s+1}{2}\right)\right)^{[F:\QQ]}
 L_f(s,\eta),
$$
then
$$
 L(1-s,\eta)
 =\bigl(|d_F|d_{E/F}\bigr)^{s-1/2}L(s,\eta).
$$
This agrees with the convention used in \cite[(3.1.7)]{Guo1}, since
$|d_E/d_F|=|d_F|d_{E/F}$.

Taking logarithmic derivatives of the symmetric functional equation at
$s=0$ and $s=1$, and using
$$
 \psi(1)=-\gamma,
 \qquad
 \psi\!\left(\frac12\right)=-\gamma-2\log2,
$$
gives
\begin{equation}\label{functional equation eta 0 1}
 \frac{L_f'(0,\eta)}{L_f(0,\eta)}
 =-\frac{L_f'(1,\eta)}{L_f(1,\eta)}
   +(\gamma+\log2\pi)[F:\QQ]
   -\log|d_F|-\log d_{E/F}.
\end{equation}
Substituting \eqref{functional equation eta 0 1} into
\eqref{final formula before functional equation} yields
\begin{equation}\label{final formula after functional equation}
    \begin{aligned}
    h_{\LL}(X)=&2\sum_{m=1}^n
    \frac{L'_f(m+1,\eta^{m+1})}{L_f(m+1,\eta^{m+1})}
    -\left((n+1)\gamma+(n+1)\log2\pi
      -\sum_{m=1}^n\frac{2}{m}+1\right)[F:\QQ]\\
    &-(n-1)\frac{L_f'(1,\eta)}{L_f(1,\eta)}
      +(n+1)\log|d_F|
      -\frac{n-1}{2}\log d_{E/F}.
    \end{aligned}
\end{equation}
In particular, the sign of the relative-discriminant term in the final form is
negative with the present finite-part convention for $L_f$.  The formula
\eqref{final formula before functional equation}, which contains
$L_f'(0,\eta)/L_f(0,\eta)$, is unchanged.

\appendix

\section{A local description of the inert \texorpdfstring{$B$}{B}-term}
\label{appendix:local inert B description}
\setcounter{equation}{0}
\renewcommand{\theequation}{A.\arabic{equation}}

This appendix gives a local description of the inert $B$-term in Proposition~\ref{Explicit f term inert}. It is independent of the proof of Section~\ref{Contributions of singular pseudo-Eisenstein series} and concerns only the nonzero Fourier coefficients.

Let $v$ be a good inert place.  Thus $\Lambda_v$ is self-dual, $\Phi_v=1_{\Lambda_v}$, and the additive character has conductor $\mathcal O_{F_v}$.  Assume that $n$ is odd.  We only consider the range
$$
 r=v(a)\geq 0,\qquad 2\mid r,
$$
which is the range used in Proposition~\ref{Explicit f term inert}.

\subsubsection*{The standard Whittaker function}
For the standard lattice function $\Phi_v$, the usual local calculation gives
\begin{equation}\label{appendix inert standard W}
 W_{a,v}(s,1,\Phi_v)
 =\bigl(1-N_v^{-(s+n+1)}\bigr)
   \sum_{i=0}^{r}N_v^{-i(s+n)}.
\end{equation}
This is the good inert specialization of the calculation in \cite[Lemma~4.5]{Guo1}.  Differentiating at $s=0$ gives
\begin{equation}\label{appendix inert standard W derivative}
\begin{aligned}
 W'_{a,v}(0,1,\Phi_v)
 ={}& (\log N_v)N_v^{-(n+1)}\sum_{i=0}^{r}N_v^{-ni}\\
 &-(\log N_v)(1-N_v^{-(n+1)})
   \sum_{i=1}^{r} iN_v^{-ni}.
\end{aligned}
\end{equation}
On the other hand, Proposition~\ref{Explicit f term inert} gives, since $n$ is odd,
\begin{equation}\label{appendix inert B formula}
 B_{a,v}(1)
 =2(1-N_v^{-(n+1)})
   \sum_{i=1}^{r}\left[\frac{i+1}{2}\right]
   N_v^{-ni}\log N_v.
\end{equation}

\subsubsection*{Comparison with the spherical derivative}
The following identity explains the shape of \eqref{appendix inert B formula}.

\begin{proposition}\label{appendix inert B comparison}
For $r=v(a)\geq0$ even, one has
\begin{equation}\label{appendix inert B derivative comparison}
\begin{aligned}
 B_{a,v}(1)
 ={}&-W'_{a,v}(0,1,\Phi_v)\\
 &+\frac{1+N_v^{-(2n+1)}}
 {(1+N_v^{-n})(1-N_v^{-(n+1)})}
 (\log N_v)W_{a,v}(0,1,\Phi_v)\\
 &-\frac{1-N_v^{-(n+1)}}{1+N_v^{-n}}
 (\log N_v)1_{\mathcal O_{F_v}}(a).
\end{aligned}
\end{equation}
\end{proposition}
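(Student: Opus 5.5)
The plan is to verify \eqref{appendix inert B derivative comparison} as a polynomial identity in one variable after substituting the explicit formulas \eqref{appendix inert standard W}, \eqref{appendix inert standard W derivative} and \eqref{appendix inert B formula}. No geometric input is needed beyond those formulas. Since $r=v(a)\ge0$, we have $1_{\mathcal O_{F_v}}(a)=1$ throughout.

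\emph{Notation.} Put
$$
 x=N_v^{-n},\qquad q=N_v^{-(n+1)},\qquad S_0=\sum_{i=0}^r x^i,\qquad S_1=\sum_{i=1}^r i x^i .
$$
The key observation is that $N_v^{-(2n+1)}=xq$. Hence the awkward coefficient in the second line of \eqref{appendix inert B derivative comparison}, multiplied by $W_{a,v}(0,1,\Phi_v)=(1-q)S_0$, simplifies to $\frac{1+xq}{1+x}\,S_0\log N_v$.

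\emph{Reducing the right-hand side.} Dividing the right-hand side by $\log N_v$ and using \eqref{appendix inert standard W derivative}, it becomes
$$
 -qS_0+(1-q)S_1+\frac{1+xq}{1+x}S_0-\frac{1-q}{1+x}.
$$
Next we combine the two $S_0$-terms. Since $\frac{1+xq}{1+x}-q=\frac{1-q}{1+x}$, the whole expression factors as
$$
 (1-q)\Big(S_1+\frac{S_0-1}{1+x}\Big).
$$
The factor $1-q$ matches the prefactor $1-N_v^{-(n+1)}$ in \eqref{appendix inert B formula}. It therefore remains to show
$$
 \sum_{i=1}^r\Big(2\Big[\tfrac{i+1}{2}\Big]-i\Big)x^i=\frac{S_0-1}{1+x}=\frac{x+x^2+\cdots+x^r}{1+x}.
$$

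\emph{Parity step.} Here $2[\frac{i+1}{2}]-i$ equals $1$ for odd $i$ and $0$ for even $i$, so the left side is $x+x^3+\cdots+x^{r-1}$. On the right, because $r$ is even, we can factor
$$
 x+\cdots+x^r=x(1+x)(1+x^2+\cdots+x^{r-2}),
$$
so the right side also equals $x+x^3+\cdots+x^{r-1}$. This is precisely where the hypothesis $2\mid r$ enters; for odd $r$ the division by $1+x$ would leave a nonpolynomial remainder. The case $r=0$ is trivial, since both sides vanish.

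The only real obstacle is bookkeeping. One must recognize $N_v^{-(2n+1)}$ as the product $xq$, so that the coefficient of $W_{a,v}$ collapses. One must also be careful that \eqref{appendix inert standard W derivative} carries the sign convention used here, namely the derivative of $N_v^{-i(s+n)}$ producing $-i\log N_v$. Once these are in place, the remaining steps are routine algebra.
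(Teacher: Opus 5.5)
Your proposal is correct and follows essentially the paper's own route: substitute \eqref{appendix inert standard W}, \eqref{appendix inert standard W derivative} and \eqref{appendix inert B formula}, use $2[\frac{i+1}{2}]=i+1_{\{i\text{ odd}\}}$, and sum the odd powers as a finite geometric series, which is where $2\mid r$ enters. Your version simply writes out the bookkeeping that the paper leaves implicit, namely $N_v^{-(2n+1)}=xq$, the collapse $\frac{1+xq}{1+x}-q=\frac{1-q}{1+x}$, and the factorization $x+\cdots+x^r=x(1+x)(1+x^2+\cdots+x^{r-2})$.
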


\begin{proof}
For even $r$,
$$
 2\left[\frac{i+1}{2}\right]=i+1_{\{i\text{ odd}\}}.
$$
Substitute this identity into \eqref{appendix inert B formula}, use \eqref{appendix inert standard W} and \eqref{appendix inert standard W derivative}, and sum the odd powers as a finite geometric series.  This gives \eqref{appendix inert B derivative comparison}.
\end{proof}

The second term on the right-hand side of \eqref{appendix inert B derivative comparison} is a multiple of the standard Whittaker family.  The last term is also an ordinary smooth Whittaker family.  Indeed, the Fourier transform of $1_{\mathcal O_{F_v}}$ is compactly supported. Prescribing this Fourier transform on the open Bruhat cell and extending by zero near the closed cell gives a smooth local section whose Whittaker coefficient is $1_{\mathcal O_{F_v}}(a)$.  Thus, on the even-valuation range, the non-ordinary part of $B_{a,v}(1)$ is the negative derivative of the standard spherical Whittaker family.

This description also shows why the $B$-term is not treated as a non-singular pseudo-Eisenstein term in the sense of \cite[Section~2.3]{Guo1}.  We record the elementary argument.

\begin{proposition}\label{appendix inert B not smooth}
There is no smooth local section at the Siegel--Weil point whose Whittaker coefficients agree with $B_{a,v}(1)$ for every $a\in\mathcal O_{F_v}$ with even $v(a)$.
\end{proposition}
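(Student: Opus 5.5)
The plan is to compare the asymptotic shape of $B_{a,v}(1)$, as $r=v(a)\to\infty$ through even values, with the asymptotic shape that the Whittaker coefficients of any smooth section must have. The claim is that smooth sections only produce coefficients of the form $A+B\,N_v^{-nr}$ for $r$ large, while $B_{a,v}(1)$ contains a term $r\,N_v^{-nr}$ with nonzero coefficient.

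\emph{Step 1: the shape of $B_{a,v}(1)$.} Starting from \eqref{appendix inert B formula}, write
$$
\sum_{i=1}^{r}\Big[\frac{i+1}{2}\Big]N_v^{-ni}=\sum_{i\ge1}\Big[\frac{i+1}{2}\Big]N_v^{-ni}-\sum_{i>r}\Big[\frac{i+1}{2}\Big]N_v^{-ni}.
$$
For even $r$, sum the tail explicitly, splitting $i=r+2k+1$ and $i=r+2k+2$ as in the proof of Proposition~\ref{appendix inert B comparison}. This gives
$$
B_{a,v}(1)=C_0+(\alpha r+\beta)N_v^{-nr}\qquad(r\text{ even}),
$$
with constants $C_0,\alpha,\beta$ depending only on $N_v$ and $n$. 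The leading tail coefficient is
$$
\alpha=-(1-N_v^{-(n+1)})\,\frac{N_v^{-n}(1+N_v^{-n})}{(1-N_v^{-2n})}\log N_v\neq0 .
$$
Equivalently, one can read this off from Proposition~\ref{appendix inert B comparison}: the term $-W'_{a,v}(0,1,\Phi_v)$ is, by \eqref{appendix inert standard W derivative}, the only source of a factor $r$.

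\emph{Step 2: the shape of smooth coefficients.} Let $\phi$ be a smooth section of the degenerate principal series at $s=0$, and use the two-cell Bruhat decomposition of $\UU(F_v)$ as in \cite[Lemma~2.5]{Guo1}.
\begin{enumerate}
\item Subtract $\phi(1)$ times the standard spherical section. The difference vanishes on a neighbourhood of the closed cell, so it is supported in $P\,w\,N(\mathfrak p_v^{-m})$ for some $m$.
\item The Whittaker coefficient of such a section is the Fourier transform in $b$ of a compactly supported locally constant function on $F_v$. Hence it is locally constant in $a$, and therefore constant for $v(a)\ge m'$.
\item The standard section contributes $W_{a,v}(0,1,\Phi_v)$. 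By \eqref{appendix inert standard W} this is $A'+B'N_v^{-nr}$ for even $r$, since $n$ is odd.
\end{enumerate}
So for every smooth section there are constants $A,B$ with $W_a(\phi)=A+B\,N_v^{-nr}$ for all even $r\gg0$.

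\emph{Step 3: conclusion.} Suppose such a section existed. Then $C_0+(\alpha r+\beta)N_v^{-nr}=A+BN_v^{-nr}$ for all large even $r$. Multiplying by $N_v^{nr}$ and letting $r$ vary forces $C_0=A$ and $\alpha r+\beta=B$ for infinitely many $r$, hence $\alpha=0$. This contradicts Step 1.

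The main obstacle is Step 2. I need to justify carefully that the open-cell part of the section gives coefficients which are eventually constant in $a$. The point is that the functional $b\mapsto\phi(wn(b))$ is compactly supported modulo the closed-cell correction, which is exactly the content of the decomposition in \cite[Lemma~2.5]{Guo1}. I also need to check that no second exponent, such as a term involving $\eta_v(a)$, survives on the even range. The bookkeeping of $\alpha\neq0$ in Step 1 is routine.
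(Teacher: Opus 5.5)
Your argument is correct and is essentially the paper's: both rule out a smooth section through the large-$r$ tail. Smoothness forces $\phi(wn(b))$ to equal $\phi(1)$ times the spherical tail for $|b|$ large, so the coefficients can only be a constant plus a multiple of $N_v^{-nr}$, whereas $B_{a,v}(1)$ (equivalently $-W'_{a,v}(0,1,\Phi_v)$) carries an $rN_v^{-nr}$ term with nonzero coefficient. The difference is only in packaging. You read off the tail of $B_{a,v}(1)$ directly and split off $\phi(1)$ times the spherical section. The paper instead first passes to $W'$ via Proposition~\ref{appendix inert B comparison}, averages over the unit torus, and compares the differences $\beta_{r+2}-\beta_r$ through the shell Fourier relation.
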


\begin{proof}
Suppose such a smooth section exists.  By Proposition~\ref{appendix inert B comparison}, after adding the two smooth terms in \eqref{appendix inert B derivative comparison}, there would be a smooth section whose Whittaker coefficients at all even valuations agree with $W'_{a,v}(0,1,\Phi_v)$.

Average this section over the compact unit torus. Its Whittaker coefficient then depends only on $r=v(a)$. Denote it by $\beta_r$.  Since $\operatorname{Nm}(\mathcal O_{E_v}^{\times})=\mathcal O_{F_v}^{\times}$, this averaging does not change the coefficients under consideration.  If $\widehat\beta$ denotes the Fourier transform on $F_v$, a direct shell calculation gives
\begin{equation}\label{appendix inert shell Fourier}
 \beta_k-\beta_{k-1}
 =N_v^k\bigl(\widehat\beta(\varpi_{F_v}^{-k})
 -\widehat\beta(\varpi_{F_v}^{-k-1})\bigr).
\end{equation}
For a smooth section, the usual closed-cell relation implies that
$$
 N_v^{(n+1)k}\widehat\beta(\varpi_{F_v}^{-k})
$$
is constant for all sufficiently large $k$.  Hence, for some constant $T$ and all sufficiently large even $r$,
\begin{equation}\label{appendix inert smooth tail}
 \beta_{r+2}-\beta_r
 =T(1-N_v^{-(n+1)})(1+N_v^{-n})N_v^{-n(r+1)}.
\end{equation}

However, \eqref{appendix inert standard W derivative} gives directly
\begin{equation}\label{appendix inert derivative tail}
\begin{aligned}
&W'_{\varpi_{F_v}^2a,v}(0,1,\Phi_v)
 -W'_{a,v}(0,1,\Phi_v)\\
={}&(\log N_v)N_v^{-n(r+1)}
\Bigl(
 N_v^{-(n+1)}(1+N_v^{-n})\\
&\hspace{35mm}-(1-N_v^{-(n+1)})
 ((r+1)+(r+2)N_v^{-n})
\Bigr).
\end{aligned}
\end{equation}
The coefficient of $rN_v^{-n(r+1)}$ in \eqref{appendix inert derivative tail} is
$$
 -(\log N_v)(1-N_v^{-(n+1)})(1+N_v^{-n})\neq0,
$$
whereas \eqref{appendix inert smooth tail} has no such factor $r$.  This is a contradiction.
\end{proof}

Thus the values of the inert $B$-term on the lattice shells have logarithmic growth and cannot be the Whittaker coefficients of a smooth local section. Its zero-th contribution is treated globally in Section~\ref{Contributions of singular pseudo-Eisenstein series}.

\

\noindent \small{School of mathematical sciences, Peking University, Beijing 100871, China}

\noindent \small{\it Email: ziqiguo0603@pku.edu.cn}

\end{document}